\newcommand{\Tr}{{\mathop{\mathrm{Tr} \,}}}
\newcommand{\per}{\mathrm{per}}
\DeclareMathOperator{\supp}{supp}
\DeclareMathOperator{\dist}{dist}
\DeclareMathOperator{\diam}{diam}
\newcommand{\LL}{{\Lambda_L}}
\renewcommand{\L}{\Lambda}
\newcommand{\DeloneSet}{Z}
\newcommand{\tL}{Q}
\newcommand{\norm}[1]{\Vert #1 \Vert}
\newcommand{\ceil}[1]{\lceil #1 \rceil}
\newcommand{\e}{\mathrm e}
\newcommand{\ra}{\rangle}
\newcommand{\la}{\langle}
     \newcommand{\EE}{\mathbb{E}}
     \newcommand{\NN}{\mathbb{N}}
     \newcommand{\PP}{\mathbb{P}}
     \newcommand{\RR}{\mathbb{R}}
     \newcommand{\ZZ}{\mathbb{Z}}
     \newcommand{\cD}{\mathcal{D}}
     \newcommand{\cI}{\mathcal{I}}
\newcommand{\abs}[1]{\left| #1 \right|}
\renewcommand{\L}{\Lambda}
\newtheorem{thm}{Theorem}[section]
\newtheorem{lem}[thm]{Lemma}
\newtheorem{cor}[thm]{Corollary}
\theoremstyle{definition}
\newtheorem{dfn}[thm]{Definition}
\newtheorem{hyp}[thm]{Assumption}
\theoremstyle{remark}
\newtheorem{rem}[thm]{Remark}
\newtheorem{exm}[thm]{Example}
\newcommand{\be}{\begin{equation}}
\newcommand{\ee}{\end{equation}}
\newcommand{\bea}{\begin{eqnarray*}}
\newcommand{\eea}{\end{eqnarray*}}
\newcommand{\beq}{\begin{eqnarray}}
\newcommand{\eeq}{\end{eqnarray}}
\newcommand{\nn}{\nonumber}
\newcommand{\hm}[1]{\leavevmode{\marginpar{\tiny%
$\hbox to 0mm{\hspace*{-0.5mm}$\leftarrow$\hss}%
\vcenter{\vrule depth 0.1mm height 0.1mm width \the\marginparwidth}%
\hbox to
0mm{\hss$\rightarrow$\hspace*{-0.5mm}}$\\\relax\raggedright #1}}}
\begin{document}

\title[Scale-free unique continuation principle]
{Scale-free unique continuation estimates and applications to random Schr\"odinger operators}

\author[C.~Rojas-Molina]{Constanza Rojas-Molina}
\address[C.~R-M.]{Universit\'e de Cergy-Pontoise, UMR CNRS 8088, F-95000 Cergy-Pontoise, France}
\email{constanza.rojas-molina@u-cergy.fr}
\author[I.~Veseli\'c]{Ivan Veseli\'c}
\address[I.~V.]{ Fakult\"at f\"ur Mathematik,\,  TU-Chemnitz, Germany  }
\urladdr{www.tu-chemnitz.de/mathematik/stochastik}

\thanks{{\today, \jobname.tex}\\
This file consist of two parts. The first (including Appendix A) is the final manuscript submitted in September '12 to CMP.
Appendix B is part of the first version of February '12. 
It concerns an estimate on local mass fluctuations inside dominating boxes, a result which is not used in the main body of the paper, but may be of independent interest.}

\keywords{unique continuation, uncertainty principle, Delone set, Anderson model, alloy-type model, Schr\"odinger operators, Wegner estimate, localization}

\maketitle

\begin{abstract}
We prove a unique continuation principle or uncertainty relation
valid for Schr\"odinger operator eigenfunctions, or more generally solutions of a
Schr\"odinger  inequality, on cubes of side $L\in 2\NN+1$.
It establishes an equi-distribution property of the eigenfunction over the box:
the total $L^2$-mass in the box of side $L$ is estimated from above by a constant times the sum of the $L^2$-masses on
small balls of a fixed radius $\delta>0$ evenly distributed throughout the box.
The dependence of the constant on the various parameters entering the problem
is given  explicitly. Most importantly, there is no $L$-dependence.

This result has important consequences for the perturbation theory of eigenvalues of Schr\"odinger operators, in particular random ones. For so-called Delone-Anderson models we deduce Wegner estimates, a lower bound for the shift of the spectral minimum, and an uncertainty relation for spectral projectors.
\end{abstract}

\section{Introduction}

The results in the present paper concern two areas of mathematics: partial differential equations and the spectral  theory of Schr\"odinger operators,
in particular random ones.
The theorems related to the first topic are rather general in nature and may have several applications.
We use them to derive spectral estimates for Schr\"odinger operators with random potentials of alloy type, which do not need to exhibit
a stochastic translational invariance.

Let us first informally describe our main result about the behaviour of eigen-solutions of a Schr\"odinger operator
$H_L= -\Delta+V$ on a cube $\L_L$ of side $L\in 2\NN+1$, with Dirichlet or periodic boundary conditions.
We assume that the potential $V$ is bounded and that $\psi\in L^2(\L_L)$  is  in the operator domain
and solves $H_L\psi =E\psi$.  The cube $\L_L$ can be decomposed into unit cubes. Place in each unit
cube arbitrarily  a ball with small, but fixed radius $\delta>0$. Denote the union of the small balls by $S$, see Figure \ref{f:cube}.
\vspace*{1em}

\begin{figure}[h]
\begin{center}
\begin{minipage}{0.5\linewidth}
\begin{tikzpicture}[scale=1.3]
\filldraw[fill=gray] (0.7,0.3) circle (1mm);
\filldraw[fill=gray] (0.5,1.1) circle (1mm);
\filldraw[fill=gray] (0.2,2.3) circle (1mm);
\filldraw[fill=gray] (0.4,3.1) circle (1mm);
\filldraw[fill=gray] (0.8,4.7) circle (1mm);
\filldraw[fill=gray] (1.6,0.2) circle (1mm);
\filldraw[fill=gray] (1.5,1.5) circle (1mm);
\filldraw[fill=gray] (1.2,2.7) circle (1mm);
\filldraw[fill=gray] (1.8,3.3) circle (1mm);
\filldraw[fill=gray] (1.3,4.2) circle (1mm);
\filldraw[fill=gray] (2.2,0.1) circle (1mm);
\filldraw[fill=gray] (2.4,1.3) circle (1mm);
\filldraw[fill=gray] (2.5,2.4) circle (1mm);
\filldraw[fill=gray] (2.8,3.8) circle (1mm);
\filldraw[fill=gray] (2.7,4.5) circle (1mm);
\filldraw[fill=gray] (3.4,0.8) circle (1mm);
\filldraw[fill=gray] (3.5,1.6) circle (1mm);
\filldraw[fill=gray] (3.8,2.6) circle (1mm);
\filldraw[fill=gray] (3.2,3.4) circle (1mm);
\filldraw[fill=gray] (3.7,4.5) circle (1mm);
\filldraw[fill=gray] (4.6,0.2) circle (1mm);
\filldraw[fill=gray] (4.5,1.4) circle (1mm);
\filldraw[fill=gray] (4.3,2.5) circle (1mm);
\filldraw[fill=gray] (4.7,3.7) circle (1mm);
\filldraw[fill=gray] (4.5,4.5) circle (1mm);

\foreach \y in {0,1,2,3,4,5}{
\draw (\y,1) --(\y,2);
\draw (\y,0) --(\y,1);
\draw (\y,2) --(\y,3);
\draw (\y,3) --(\y,4);
\draw (\y,4) --(\y,5);

\draw (0,\y) --(1,\y);
\draw (1,\y) --(2,\y);
\draw (2,\y) --(3,\y);
\draw (3,\y) --(4,\y);
\draw (4,\y) --(5,\y);
}
\end{tikzpicture}
\end{minipage}
 \end{center}
\label{f:cube}
\caption{Cube ${\L_L}$ of size $L$ and the union $S$ of balls.}
 \end{figure}
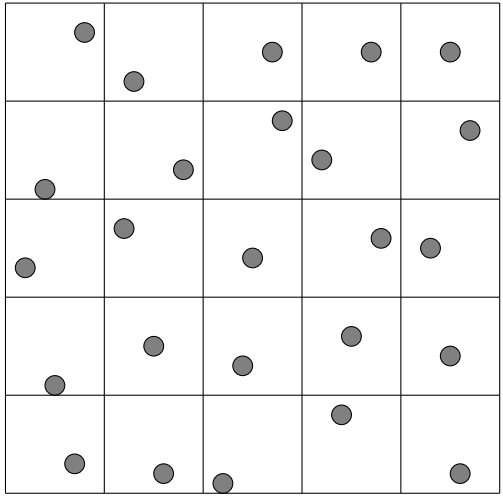

Then $S \subset {\L_L}$ implies obviously
\[
 \int_S |\psi(x)|^2 \leq \int_{\L_L}|\psi(x)|^2
\]

We show that there exists a constant $C$ such that
\begin{equation}
\label{eq:main-intro}
 \int_S |\psi(x)|^2 \geq C \int_{\L_L}|\psi(x)|^2
\end{equation}
The constant is independent of $L \in 2\NN+1$. It depends on the effective potential $V-E$ only through
$\|V-E\|_\infty$. It does not change if we move a small ball in such a way that it is still within the original unit cube.

Of course, the result remains true, if $S$ contains two or more small balls. Thus it applies if the centers of the balls form a Delone set in $\RR^d$.
\medskip

This result is useful for the perturbation theory of eigenvalues. It provides lower bounds on the
derivative of eigenvalues under a perturbation  with positive semidefinite potential which is given by the characteristic function $\chi_S$
of the set $S$.

In particular, one can conclude that the bottom of the spectrum is lifted linearly if the perturbation
$\chi_S$ is switched on by a coupling constant (up to a certain critical value). This allows to conclude a so called uncertainty  principle
for energies near the spectral minimum.
\medskip

In the context of random Schr\"odinger operators one is interested in whether the randomness of the potential
regularizes the  (integrated) density of states. In our situation, where we do not
 impose the usual ergodicity assumptions on the potential,
it is natural to formulate this in terms of spectral properties of a Schr\"odinger operator restricted to a finite cube.
Let ${\L_L}$ be a cube of side $L\in2\NN+1$, $(\Omega, \PP)$ a probability space, $V_0 \colon {\L_L}\to \RR$ a bounded deterministic potential,  $V_\omega \colon {\L_L}\to \RR$ a bounded random
potential and $H_{\omega,L}= -\Delta + V_0+V_\omega$ a random Schr\"odinger operator on ${\L_L}$ with Dirichlet or periodic boundary conditions.
Now one is interested in the continuity properties of the averaged eigenvalue counting function
\[
 n(E,L):= \EE\left\{ \Tr \chi_{(-\infty,E]} (H_{\omega,L}) \right \}
\]
Bounds on the difference  $n(E+\epsilon,L)-n(E,L)$ are called Wegner estimates.
For surveys see, e.g., \cite{KirschM-07} or \cite{Veselic-07b}.
We derive Wegner estimates for random potentials of the type
\[
 V_\omega  = \sum_{k  \in {\tL_L}} \omega_k \, \chi_{B(z_k,\delta)},
 \quad \text{ with } \tL_L:={\L_L}\cap \ZZ^d
\]
 where $(\omega_k)_k$ is an i.i.d.~collection of bounded random variables,
$B(z_k,\delta)$ denotes a ball of radius $\delta>0$ around $z_k$, which is chosen in such a way that
it  is contained in a cube of side one centered at the lattice site $k \in \ZZ^d$, cf. Figure  \ref{f:cube}.
For such random Schr\"odinger operators we derive bounds on $n(E+\epsilon,L)-n(E,L)$
which are proportional to the box volume and
\begin{itemize}
 \item have an optimal dependence on the energy interval size $\epsilon$, for values $E$ near the spectral bottom.
 \item have an optimal dependence on the energy interval size $\epsilon$, up to an additional factor
 $|\log \epsilon|^d$, for other values of $E$.
\end{itemize}
\medskip

For specific types of random Schr\"odinger operators related results have been
derived before in a number of papers.  Let us summarize this briefly.
We will restrict ourselves to papers where the single site potential has \emph{small support}, {i.e.} does not cover the whole unit cube,
otherwise the body of literature would be to large.

First we discuss results for energy intervals near the spectral minimum and in the case  that the non-random part of the Schr\"odinger operator is the pure Laplacian.
Wegner estimates and eigenvalue lifting estimates have been derived in
\cite{Kirsch-96}, in the case that the sequence $\{z_k\}_{k \in \ZZ^d}$ is the lattice $\ZZ^d$ itself. See in particular Lemma 3.1 there,  which is based on control of hitting probabilities of Brownian motion.
A very elementary approach to eigenvalue lifting estimates is provided by the spatial averaging trick, used in \cite{BourgainK-05},
\cite{GerminetHK-07} in periodic situations  and extended to non-periodic situations in \cite{Germinet-08}  and
the forthcoming \cite{GerminetMR}.

A different approach for eigenvalue lifting was derived in \cite{BoutetdeMonvelNSS-06}.
In \cite{BoutetdeMonvelLS-11}  it was shown how one can conclude an uncertainty principle at low energies
based on an eigenvalue lifting estimate.

Related results have been derived for energies near spectral edges in  \cite{KirschSS-98a}, in particular
Proposition 3.2 there, and in \cite{CombesHN-01}.

In one space dimension eigenvalue lifting  results and Wegner estimates have been derived in
\cite{Veselic-96}, \cite{KirschV-02b}. There a periodic arrangement of the sequence  $\{z_k\}_{k \in \ZZ}$ is assumed,
but the proof carries over to the case of non-periodic $\{z_k\}_{k \in \ZZ}$  verbatim. This has been implemented in the context of quantum graphs in \cite{HelmV-07}.

In the case that both the potential $V_0$ and the sequence $\{z_k\}_{k \in \ZZ^d}$  are periodic w.r.t. the lattice $\ZZ^d$,
an uncertainty principle and a Wegner estimate, which are valid for arbitrary energy regions, have been proven in
\cite{CombesHK-03} and \cite{CombesHK-07}.
An alternative proof for the result in \cite{CombesHK-07},  with more explicit control of constants
has been derived in \cite{GerminetK}.
These papers make use of Floquet theory. This is the reason why they are a priori
restricted to periodic background potentials as well as periodic positions of impurities.
In contrast, we use a combinatorial and geometric argument, based on the notion of
\emph{dominating} boxes, in combination with unique continuation estimates as in \cite{BourgainK-05,GerminetK,BourgainK}. For this reason we do not have to assume a periodic situation, but merely an underlying Delone structure of the Hamiltonian.

The case where the background potential is periodic but the impurities need not be periodically arranged has been considered
in \cite{BoutetdeMonvelNSS-06}, \cite{Germinet-08} for low energies.
In the case where the unperturbed operator is a Landau Hamiltonian related results have been obtained in
\cite{CombesHKR-04} and \cite{RojasMolina}.
\medskip

The results derived in this paper are relevant for the theory of Anderson localization,
more precisely, for the so-called multi scale analysis (MSA)
for alloy or Anderson type random Schr\"odinger operators in continuum configuration space.
The MSA is an induction procedure over increasing length scales,
which correspond to sides of larger and larger cubes.
The induction step of the MSA uses in a crucial way a Wegner estimate.
We have already mentioned that such results follow from our Main Theorem  \ref{t:scale_free-UCP}.
This is discussed in detail in  \S \ref{s:Wegner} and \S \ref{ss:optWE}.

The induction anchor of the MSA is based on the so-called initial scale estimate.
Under appropriate assumptions on the distribution of the random variables
the initial scale estimate is implied by explicit quantitative lower bounds on the first eigenvalue
of finite box Hamiltonians.
That the latter follow from our Main Theorem  \ref{t:scale_free-UCP} is discussed in
\S \ref{s:spectral_minimum}.
If the background potential $V_0$ is periodic
an alternative way to establish initial scale estimates
is to use Lifshitz tail bounds. This is carried out in \cite{SchumacherV}.
(Note that Lifshitz tails in the strict sense of the word may not exist,
since the model does not need to be ergodic.)

For all results in Section \ref{s:Anderson-application} it is crucial that in inequality
\eqref{eq:main-intro} (or more precisely in Theorem  \ref{t:scale_free-UCP})
the dependence on the potential enters only through its sup-norm
and is otherwise independent of its particular shape.
\medskip

If the integrated density of states (IDS) exists, i.e.~if the limit
\begin{equation}
\label{eq:convergence-of-expectations}
N(E):=  \lim_{L\to \infty}  n(E,L)
\end{equation}
exists for a dense set of energies $E\in \RR$,
then the Wegner estimates from  \S \ref{s:Wegner} and \S \ref{ss:optWE}
yield bounds on the modulus of continuity of the function $N$,
and imply its local H\"older, or even Lipschitz, continuity.
Note that a special case where \eqref{eq:convergence-of-expectations}
holds is when the IDS is a self-averaging quantity, i.e.{} when
\begin{equation}
\label{eq:a-s-convergence}
N(E):=  \lim_{L\to \infty}  \Tr \chi_{(-\infty,E]} (H_{\omega,L})
\end{equation}
almost surely exists and is $\omega$-independent for a dense set of energies $E\in \RR$.
Relation \eqref{eq:a-s-convergence} can be inferred
from appropriate ergodic theorems. Note that
\eqref{eq:a-s-convergence} is a sufficient, but not necessary condition for
\eqref{eq:convergence-of-expectations} to hold.

It is well known that \eqref{eq:a-s-convergence} holds for the standard alloy type model,
i.e.~for periodically arranged impurities with i.i.d.~coupling constants.
In \cite{GerminetMR} it is established
that \eqref{eq:a-s-convergence} holds for certain more general situations,
e.g.~if the locations of the impurities form a Delone set with a linear repetitivity property.
\medskip

In Section \ref{s:Anderson-application} we discuss two types of
Wegner estimates, one of which is valid for all bounded
intervals along the  energy axis, the other one only near the spectral minimum.

It is quite possible that our method allows also to establish Wegner estimates
with linear dependence on the box volume and energy interval length near spectral edges.
In this context, ideas from \cite{KirschSS-98a} may be relevant.
Note however that for this a consistent notion of spectral edges, valid for an infinite sequence of length scales, is necessary. This is for instance satisfied, if the deterministic background potential $V_0$ is periodic.

The same circle of ideas may be relevant to control the movement of spectral band edges
under perturbation by a Delone-Anderson potential, as has been done for periodic models
in Theorem 2.2 of \cite{KirschSS-98a}. This in turn is a result relevant for deriving the
initial scale estimate of the MSA.
\medskip

The results of the present paper have been announced in the Oberwolfach abstract \cite{Veselic}.
The paper is split into two parts: the two sections which follow are devoted to Theorems on unique continuation
principles, while their proofs are postponed to the end of the paper.
Section \ref{s:Anderson-application} is devoted to applications in the theory of random Schr\"odinger operators.

\section{Scale-free unique continuation principle}
\label{s:scale-free_ucp}
Let $\L_L(x)= [x-L/2,x+L/2]^d$ be a box of sidelength $L$ centered at $x \in \RR^d$ and in particular
$\L_L(0)= [-L/2,L/2]^d$. For brevity we will sometimes write $\L_L:= \L_L(0)$.
For an open $G\subset \RR^d$ let $W^{2,2}(G)$ be the usual second order Sobolev space.
Denote by $\cD(\Delta_{\L,0})\subset W^{2,2}(\L_L)$ (respectively $ \cD(\Delta_{\L,\per})$)
the domain of the Laplacian on $\L=\L_L$ with Dirichlet or periodic boundary conditions, respectively.

Let $V_0\colon \RR^d \to \RR$ be a bounded, measurable  potential, $H_0=-\Delta+V_0$ a Schr\"odinger operator,
$H_{0,L}$ its restriction to ${\L_L}$ with Dirichlet or periodic boundary conditions and $\psi$ an eigenfunction such that
$H_{0,L}\psi = E \psi$ for real $E$. If we set $V:=V_0-E$, $K_V = \|V_0-E\|_\infty$,
and $H_L=-\Delta+ V_0-E=-\Delta+ V=H_{0,L}-E$  on ${\L_L}$,  then $H_{L}\psi= 0$ and $V \colon \RR^d \to [-K_V,K_V]$.
This motivates the hypotheses of the following
\begin{thm}
\label{t:scale_free-UCP}
Fix $K_V\in [0, \infty), \delta \in (0,1]$.
There exists a constant $C_{sfUC}=C_{sfUC}(d,K_V,\delta)>0$ such that,
for any $L\in 2\NN+1$, any sequence
 \begin{equation}
\label{eq:ball_in_box}
Z:=\{z_k\}_{k \in \ZZ^d} \text{ in $ \RR^d$ such that} \quad
B(z_k,\delta) \subset \Lambda_1(k), \quad \text {for all } k \in \ZZ^d
 \end{equation}
any measurable $V\colon {\L_L}\to [-K_V,K_V]$  and any real-valued $\psi\in \cD(\Delta_{\L,0}) \cup \cD(\Delta_{\L,\per})$ satisfying
\begin{equation}
\label{eq:V_psi_conditions}
|\Delta \psi| \leq |V\psi| \quad \text{a.e. on }   \L_L
\end{equation}
 we have
\begin{equation}
\label{eq:scale_free-UCP}
\int_{S_L} |\psi(x)|^2 \ dx
=
\sum_{k \in \tL_L}\norm{\psi}^2_{B(z_k,\delta)}
\geq
C_{sfUC}\norm{\psi}_{\L_L}^2
\end{equation}
where $S_L := S \cap \L_L = \bigcup_{k \in \tL_L} B(z_k,\delta)$ ,
$\tL_L=\Lambda_L \cap \ZZ^d$, and $S:=\bigcup_{k \in \ZZ^d} B(z_k,\delta)$.
\end{thm}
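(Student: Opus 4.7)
The plan is to reduce the scale-free statement to a purely local, quantitative three-balls interpolation on each unit cube, and then glue these local estimates together by a H\"older summation whose constants carry no $L$-dependence.

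\emph{Step 1 (local quantitative UCP on unit cubes).}
From a Carleman inequality for $-\Delta+V$ applied to the Schr\"odinger inequality \eqref{eq:V_psi_conditions}, I aim to obtain, for each $k\in\ZZ^d$, an interpolation estimate of the form
\[
\|\psi\|_{\Lambda_1(k)}^{2} \;\le\; C_1\,\|\psi\|_{B(z_k,\delta)}^{2\alpha}\,\|\psi\|_{\Lambda_3(k)}^{2(1-\alpha)},
\]
with parameters $\alpha=\alpha(d,K_V,\delta)\in(0,1)$ and $C_1=C_1(d,K_V,\delta)>0$ depending on $V$ only through $K_V=\|V\|_\infty$. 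Condition \eqref{eq:ball_in_box} fixes the nested geometry $B(z_k,\delta)\subset\Lambda_1(k)\subset\Lambda_3(k)$ with parameters independent of $k$, and that is what makes $\alpha$ and $C_1$ independent of $L$.

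\emph{Step 2 (boundary cubes via extension).}
For $k$ adjacent to $\partial\L_L$ the cube $\Lambda_3(k)$ leaves $\L_L$, so $\psi$ and $V$ must be extended to $\Lambda_{L+2}$ while preserving $K_V$ and \eqref{eq:V_psi_conditions}. For periodic boundary conditions the extension is the obvious periodic one; for Dirichlet boundary conditions I extend $\psi$ by odd reflection across each face of $\L_L$ and $V$ by even reflection. Using $\psi\in H^2(\L_L)\cap H^1_0(\L_L)$, the extended $\tilde\psi$ lies in $W^{2,2}(\Lambda_{L+2})$, still satisfies $|\Delta\tilde\psi|\le|\tilde V\tilde\psi|$ a.e., and obeys $\|\tilde V\|_\infty=K_V$ and $\|\tilde\psi\|_{\Lambda_{L+2}}^{2}\le c_d\,\|\psi\|_{\L_L}^{2}$ for a dimensional constant $c_d$.

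\emph{Step 3 (H\"older summation).}
Using the tiling $\L_L=\bigcup_{k\in\tL_L}\Lambda_1(k)$ (valid because $L$ is odd), I sum the local estimate of Step 1 over $k\in\tL_L$ and apply H\"older's inequality with conjugate exponents $1/\alpha$ and $1/(1-\alpha)$:
\[
\|\psi\|_{\L_L}^{2} \;=\; \sum_{k\in\tL_L}\|\psi\|_{\Lambda_1(k)}^{2} \;\le\; C_1\Bigl(\sum_{k\in\tL_L}\|\psi\|_{B(z_k,\delta)}^{2}\Bigr)^{\!\alpha}\Bigl(\sum_{k\in\tL_L}\|\tilde\psi\|_{\Lambda_3(k)}^{2}\Bigr)^{\!1-\alpha}.
\]
Each point of $\Lambda_{L+2}$ belongs to at most $3^d$ of the cubes $\Lambda_3(k)$, so the last factor is bounded by $(3^d c_d\,\|\psi\|_{\L_L}^{2})^{1-\alpha}$. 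Rearranging produces \eqref{eq:scale_free-UCP} with $C_{sfUC}=[C_1(3^dc_d)^{1-\alpha}]^{-1/\alpha}$, which depends only on $d$, $K_V$, $\delta$; no $L$-dependence enters at any stage.

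\emph{Expected obstacles.}
The hard step is Step 1: one must obtain the interpolation inequality with \emph{explicit} dependence of $\alpha$ and $C_1$ on $K_V$, which forces a careful choice of Carleman weight in the style of \cite{BourgainK-05, GerminetK} (the resulting $\alpha$ typically scales like $\exp(-c K_V^{2/3})$). A secondary technical point is verifying that the Dirichlet odd-reflection produces a $W^{2,2}$ function for which the pointwise Schr\"odinger inequality persists across the reflected faces, which uses $\psi|_{\partial\L_L}=0$ and the fact that $V$ can be extended by even reflection without changing its sup-norm. The introduction also alludes to a combinatorial argument based on \emph{dominating boxes}; this may replace the clean H\"older step above but at the cost of a more intricate counting argument to certify that enough unit boxes carry a controlled fraction of the $L^2$-mass.
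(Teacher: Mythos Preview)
Your proposal is correct in outline and represents a genuinely different route from the paper. The paper does \emph{not} use a three-ball interpolation plus H\"older summation; instead it introduces the notion of \emph{dominating sites} (those $k$ with $\|\psi\|_{\Lambda_1(k)}^2\ge (2T^d)^{-1}\|\tilde\psi\|_{\Lambda_T(k)}^2$ for $T=62\lceil\sqrt d\rceil$), shows they carry at least half the total mass, and then applies the quantitative UCP of Corollary~\ref{c:our_corollary} with $\Theta=\Lambda_1(k)$ and $x=z_{k^+}$, where $k^+$ is a \emph{right near-neighbor} at lattice distance $\lceil\sqrt d\rceil+1$. The point of the near-neighbor trick is that the paper's UCP requires $\operatorname{dist}(x,\Theta)\ge\operatorname{diam}\Theta$, so the small ball must sit \emph{outside} the dominating unit box; the dominating condition supplies the fixed value $\beta=2T^d$ needed in that estimate, and injectivity of $k\mapsto k^+$ controls the overcounting.

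Your H\"older-summation argument is cleaner and closer in spirit to the Jerison--Lebeau/Lebeau--Robbiano spectral-inequality tradition; it avoids the combinatorics entirely. Two small caveats. First, your Step~1 is not the paper's Theorem~\ref{t:quantitative-UCP} (there the ball and $\Theta$ are disjoint); you need a genuine three-ball/propagation-of-smallness inequality with the small ball contained in the intermediate set, which is standard from Carleman estimates but is a different local lemma than the one the paper proves. Second, $\Lambda_3(k)$ is too small once $d\ge 2$: since $\operatorname{diam}\Lambda_1(k)=\sqrt d$ while the distance from a point of $\Lambda_1(k)$ to $\partial\Lambda_3(k)$ is only $1$, you cannot fit the required outer ball. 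Replace $\Lambda_3(k)$ by $\Lambda_N(k)$ with $N=N(d)$ large enough (e.g.\ $N>2\sqrt d+1$); the covering constant $3^d$ then becomes $N^d$, which is still dimensional and does not affect your conclusion. The extension step you describe matches the paper's exactly.
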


We have an explicit lower bound  on $C_{sfUC}$, namely,
\begin{equation}
\label{eq:CsfUC-dependence}
C_{sfUC}\geq \left( \frac{\delta}{C}\right)^{C+C K_V^{2/3}}
\end{equation}
where $C=C(d)\in(0,\infty)$ is a constant depending on the dimension $d$ only.
In fact one can express $C_{sfUC}$ directly by the constant $C_{qUC}$
from Corollary \ref{c:our_corollary}. In the case of periodic b.c. it reads:
\[
C_{sfUC}
= \frac{1}{2} C_{qUC} \left(d, K_V, R=2 \lceil\sqrt{d}\rceil,\delta, \beta=2(62\lceil\sqrt{d}\rceil)^d\right)
\]
and in the case of Dirichlet b.c.
\[
C_{sfUC}
= \frac{1}{4} C_{qUC} \left(d, K_V, R=2 \lceil\sqrt{d}\rceil,\delta, \beta=2(124\lceil\sqrt{d}\rceil)^d\right)
\]
Note that the constant $C_{sfUC}$ does not depend on  $L\in 2\NN+1$.
It can be chosen uniformly, as long the energy parameter $E$ and the potential $V$ vary in a set,
such that $\|V-E\|_\infty\leq K$ for some fixed $K<\infty$.
The dependence of $C_{sfUC} $ on $\delta$, for small values of
$\delta>0$, is of the same type as in the bound in  Corollary \ref{c:our_corollary}, i.e.~polynomial in $\delta$.\\

In certain applications one is interested in the situation where the set $Z$ is more dilute,
but still commensurate to a sublattice $(M\ZZ)^d$.
In this situation the following Corollary of  Theorem
\ref{t:scale_free-UCP} applies.

\begin{cor}
\label{c:scale_free-UCP}
Fix $K_V\in [0, \infty), M \in \NN, \delta \in (0,1]$.
Let the subset $Z :=\{z_k\}_{k \in (M\ZZ)^d}$ of  $ \RR^d$ be such that
\begin{equation}
\label{eq:M-impurities}
B(z_k,\delta) \subset \Lambda_M(k), \quad \text {for all } k \in (M\ZZ)^d
\end{equation}
Then there exists a constant $C:=C(d)\in(0,\infty)$ depending merely on the dimension $d$, such that,
for any $L\in 2\NN+1$, any measurable $V\colon {\L_{ML}}\to [-K_V,K_V]$  and any real-valued $\phi\in \cD(\Delta_{\L_{ML},0}) \cup \cD(\Delta_{\L_{ML},\per})$ satisfying
$|\Delta \phi| \leq |V\phi| \quad \text{a.e. on }   \L_{ML} $
\begin{equation}
\label{eq:M-scale_free-UCP}
\int_{S_{ML}} |\phi(x)|^2 \ dx
=
\sum_{k \in \tL_{ML}}\norm{\phi}^2_{B(z_k,\delta)}
\geq
\left( \frac{\delta}{C M}\right)^{C+C M^{4/3}K_V^{2/3}} \norm{\phi}_{\L_{ML}}^2
\end{equation}
where $S_{ML} := S \cap \L_{ML} = \bigcup_{k \in \tL_{ML}} B(z_k,\delta)$ ,
$\tL_{ML}=\Lambda_{ML} \cap (M\ZZ)^d$, and $S:=\bigcup_{k \in (M\ZZ)^d} B(z_k,\delta)$.
\end{cor}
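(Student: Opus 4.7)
The plan is to reduce Corollary \ref{c:scale_free-UCP} to Theorem \ref{t:scale_free-UCP} by a scaling argument: stretch out the cube $\Lambda_{ML}$ by a factor $1/M$ so that the sublattice $(M\ZZ)^d$ becomes $\ZZ^d$, and the $M$-cubes $\Lambda_M(k)$ become unit cubes. The ball-in-unit-cube hypothesis \eqref{eq:ball_in_box} will then be met with a shrunken radius $\delta/M \in (0,1]$, and the price to pay is a magnification of the potential.

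Concretely, define $\psi \colon \Lambda_L \to \RR$ by $\psi(y) := \phi(My)$ and set $\tilde V(y) := M^2 V(My)$. By the chain rule $\Delta \psi(y) = M^2 (\Delta \phi)(My)$, so the Schrödinger inequality $|\Delta \phi| \leq |V \phi|$ on $\Lambda_{ML}$ transfers to $|\Delta \psi| \leq |\tilde V \psi|$ on $\Lambda_L$, with $\|\tilde V\|_\infty \leq M^2 K_V$. Dirichlet (resp.\ periodic) boundary conditions on $\Lambda_{ML}$ transform into Dirichlet (resp.\ periodic) boundary conditions on $\Lambda_L$, so $\psi \in \cD(\Delta_{\L_L,0}) \cup \cD(\Delta_{\L_L,\per})$; and $L \in 2\NN+1$ is preserved. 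For the geometry, writing $k = Mk'$ with $k' \in \ZZ^d$, the inclusion $B(z_k,\delta) \subset \Lambda_M(k)$ becomes $B(\tilde z_{k'},\delta/M) \subset \Lambda_1(k')$, where $\tilde z_{k'} := z_{Mk'}/M$. Hence the family $\{\tilde z_{k'}\}_{k' \in \ZZ^d}$ satisfies \eqref{eq:ball_in_box} with radius $\delta/M$.

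Applying Theorem \ref{t:scale_free-UCP} to $\psi$, $\tilde V$, and $\{\tilde z_{k'}\}$, together with the explicit lower bound \eqref{eq:CsfUC-dependence}, yields
\[
\sum_{k' \in \tL_L} \|\psi\|^2_{B(\tilde z_{k'},\delta/M)} \;\geq\; \left(\frac{\delta/M}{C}\right)^{C + C(M^2 K_V)^{2/3}} \|\psi\|^2_{\Lambda_L} \;=\; \left(\frac{\delta}{CM}\right)^{C + CM^{4/3} K_V^{2/3}} \|\psi\|^2_{\Lambda_L},
\]
after absorbing the factor $M^{-(C+CM^{4/3}K_V^{2/3})}$ into the base (harmlessly enlarging $C=C(d)$ if needed). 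Finally, the change of variables $x = My$ produces a Jacobian $M^{-d}$ in each $L^2$-norm: $\|\psi\|^2_{B(\tilde z_{k'},\delta/M)} = M^{-d}\|\phi\|^2_{B(z_{Mk'},\delta)}$ and $\|\psi\|^2_{\Lambda_L} = M^{-d}\|\phi\|^2_{\Lambda_{ML}}$. Since the index bijection $k' \leftrightarrow k = Mk'$ identifies $\tL_L$ with $\tL_{ML} = \Lambda_{ML}\cap(M\ZZ)^d$, the factors $M^{-d}$ cancel and \eqref{eq:M-scale_free-UCP} drops out.

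The only point requiring a bit of care is the bookkeeping of the explicit bound \eqref{eq:CsfUC-dependence} under the substitutions $K_V \mapsto M^2 K_V$ and $\delta \mapsto \delta/M$; nothing else is an obstacle since the scaling is linear and respects both boundary conditions and the parity of $L$. No independent idea is needed beyond the observation that the theorem's constant is scale-covariant in the correct way.
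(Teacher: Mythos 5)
Your scaling argument is correct and is precisely the second route the paper explicitly suggests (``Or, one considers the scaled function $\psi(x):=\phi(Mx)$, to which Theorem \ref{t:scale_free-UCP} can be applied''); you have filled in the bookkeeping — $\tilde V(y)=M^2V(My)$ giving $K_{\tilde V}\le M^2K_V$, the shrunken radius $\delta/M\in(0,1]$ and rescaled centers $\tilde z_{k'}=z_{Mk'}/M$, preservation of both boundary conditions and of $L\in 2\NN+1$, and the cancelling Jacobian factors $M^{-d}$ — that the paper leaves to the reader. One small stylistic remark: there is nothing to ``absorb,'' since $\frac{\delta/M}{C}=\frac{\delta}{CM}$ is an identity and $(M^2K_V)^{2/3}=M^{4/3}K_V^{2/3}$, so the bound in \eqref{eq:M-scale_free-UCP} drops out exactly from \eqref{eq:CsfUC-dependence} with no further enlargement of $C(d)$.
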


There are two ways to prove Corollary \ref{c:scale_free-UCP}.
Either one follows the proof of Theorem \ref{t:scale_free-UCP}
presented in Section \ref{s:proof-scale_free-UCP}
and pays attention to the additional parameter $M$, which basically takes the role of $R$
in equation \eqref{eq:our_corollary}.
This has been carried out in detail in the Appendix of \cite{Rojas-Molina-Thesis}.
Or, one considers the scaled function $\psi(x):=\phi(Mx)$, to which Theorem
\ref{t:scale_free-UCP} can be applied. The resulting inequality for $\psi$ can then be
transformed back to yield \eqref{eq:M-scale_free-UCP}.

Note that if there exists a sequence $\{z_k\}_{k \in \ZZ^d}$ in $ \RR^d$ such that
$z_k  \in \Lambda_1(k)$ for all  $k \in \ZZ^d$, but there is no $\delta>0$ such that
$B(z_k,\delta) \subset \Lambda_1(k)$, for all  $k \in \ZZ^d$, then still
\eqref{eq:M-impurities} holds for $M\geq 3$, $M\in \NN$ and $\delta <1$.

\medskip

An important feature of the scale free unique continuation Theorem \ref{t:scale_free-UCP}
is the quantitative control of the constant $C_{sfUC}$ on the various parameters.
In different applications the different parameters play different roles.
In the Delone-Anderson-model, which we analyze in detail in Section \ref{s:Anderson-application},
the parameter $\delta>0$ is arbitrary but fixed.
Let us give an instance of a random Schr\"odinger operator
where this parameter is a random variable.

\begin{rem}
\label{r:breather}
The dependence of the constant $C_{sfUC}$ on $\delta$ is of particular interest,
if one considers this parameter as variable.
This is  the case for so called random breather potentials
\begin{equation}
\label{eq:breather}
 V_\omega (x) := \sum_{k \in \ZZ^d} \chi_{B(k, r_k(\omega))} (x)
\end{equation}
where $r_k$ is an {i.i.d.} sequence of non-negative, bounded random variables.
Models of this type have been considered in
\cite{CombesHM-96,CombesHN-01,KirschV-10, Veselic-07}.
An uncertainty principle for random breather potentials as in \eqref{eq:breather} has been derived in \cite{SchumacherV}.
\end{rem}

\subsection*{Open question}
Theorem  \ref{t:scale_free-UCP} applies to functions $\psi$ which are pure states in the sense of
quantum mechanics, i.e. eigenfunctions of a Schr\"odinger operator, or more generally, solutions of a
differential inequality.
For applications it would be relevant to know whether the same type of estimate holds for mixed states,
i.e. linear combinations of eigenfunctions, as well, as long as the corresponding eigenvalues range over some bounded energy interval $[-E_0, E_0]$. In that case the parameter $K_V$ would correspond to
$\sup_{E \in [-E_0, E_0]} \|V-E\|_\infty$.

This property can be formulated as an uncertainty principle relating spectral
projectors of a reference operator to a potential.
To formulate this, let $V\colon {\L_L}\to [-K_V,K_V]$ be  measurable,
$H_L=-\Delta+V$ a Schr\"odinger operator on  ${\L_L}$ with periodic or Dirichlet boundary conditions,
$I:= [-E_0, E_0]$ a bounded energy interval as above,
$\chi_I(H_L)$ the spectral projector of $H_L$ onto $I$ and
$W_L$ the multiplication operator by the characteristic function of the set
$S_L := S \cap \L_L = \bigcup_{k \in \tL_L} B(z_k,\delta)$, where $Z:=\{z_k\}_{k \in \ZZ^d}$ is a Delone set  as above.

In this terminology the open question at hand is:
Is there  a constant $C$ depending only on the dimension,
the norm of the (shifted) potential $K:=\sup_{E \in [-E_0, E_0]} \|V-E\|_\infty$,
the radius $\delta$ of the small balls constituting the set $S_L$, such that
\[
\hspace*{3cm} \chi_I(H_L) \, W_L \, \chi_I(H_L) \geq C \, \chi_I(H_L)    \hspace*{3cm} (\clubsuit)
\]
in the sense of quadratic forms.
In the situation where the potential $V$ and the sequence $\{z_k\}_{k \in \ZZ^d}$  are $\ZZ^d$-periodic,
the question has a positive answer.
This has been proven in \cite{CombesHK-03} and \cite{GerminetK}, see also \cite{CombesHK-07}.
An abstract condition, when this property does holds was established in
\cite{BoutetdeMonvelLS-11}, which is applicable in low energy regimes.

We will use this type of uncertainty relation,
in particular theorems from \cite{BoutetdeMonvelLS-11} and \cite{CombesHK-07},
in Section \ref{s:Anderson-application}.
\bigskip

\textbf{Note added in proof}

After the final version of this paper was accepted Abel Klein sent us the preprint 
\emph{Unique continuation principle for spectral projections of Schr\" odinger operators and optimal Wegner estimates for non-ergodic random Schr\" odinger operators},
see  arXiv:1209.4863. There he establishes (among others) the unique continuation principle for spectral projections
$(\clubsuit)$ for energy intervals $I \subset  [-\infty, E_0]$ where $E_0\in \RR$ is fixed and the length 
of $I$ does not exceed $2\gamma$ for an explicitly given $\gamma >0$. 

\section{Quantitative unique continuation principle}
\label{s:quantitative-ucp}

In \cite[Theorem A.1]{GerminetK} a quantitative version of the unique continuation
principle for solutions of the (inhomogeneous) Schr\"odinger equation was derived.
It is an $L^2$-averaged version of the pointwise estimate \cite[Lemma~3.15]{BourgainK-05}.
In the latter result the important parameter was the distance scale $R$ (see Theorem \ref{t:quantitative-UCP} below).

In the proof of Theorem \ref{t:scale_free-UCP}, where the quantitative unique continuation estimate plays an important role
the distance scale is of the order of one, and the dependence on other parameters is more interesting.
We give here a variant of \cite[Theorem A.1]{GerminetK}, as it will be used in Theorem \ref{t:scale_free-UCP}.

At the Oberwolfach Workshop ``Correlations and Interactions for Random Quantum Systems'' in October 2011
we learned that Bourgain and Klein \cite{BourgainK} have derived explicit unique continuation estimates
very similar to the ones presented here.

\begin{thm}
\label{t:quantitative-UCP}
Let $K_V,D_0, R, \beta\in [0, \infty), \delta \in (0,1]$.
There exists a constant $C_{qUC}=C_{qUC}(d,K_V, D_0, R,\delta, \beta) >0$ such that,
for any $G\subset  \RR^d$ open, any $x\in G$, any $\Theta\subset G$ measurable,
satisfying the geometric conditions
 \[
\diam \Theta \leq  R = \dist (x,\Theta), \quad \delta < 4 R,
\quad \text{ and }   \quad B(x, 12R+2D_0)\subset G,	
\]
and any measurable $V\colon G \to [-K_V,K_V]$  and real-valued $\psi\in W^{2,2}(G)$ satisfying the  differential inequality
\begin{equation}
\label{eq:subsolution}
|\Delta \psi| \leq |V\psi| \quad \text{a.e. on }   G
\quad  \text{ as well as } \quad \frac{\norm{\psi}_G^2}{\norm{\psi}_\Theta^2} \leq \beta
\end{equation}
 we have
\begin{equation}
\label{eq:aim}
\norm{\psi}^2_{B(x,\delta)}\geq
C_{qUC}\norm{\psi}_\Theta^2
\end{equation}
\end{thm}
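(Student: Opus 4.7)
The plan is to follow the scheme of \cite[Theorem~A.1]{GerminetK} (see also \cite{BourgainK-05,BourgainK}), based on a Carleman estimate for the Schr\"odinger operator $-\Delta + V$ and the resulting three-balls (Hadamard) interpolation inequality, while keeping explicit control on the dependence of the constant on $\delta$, $R$, $\beta$ and $K_V$. The key geometric observation is that since $\dist(x,\Theta)=R$ and $\diam\Theta\leq R$, we have $\Theta\subset B(x,2R)$, so the conclusion is in essence a single three-balls inequality for the concentric triple $B(x,\delta)\subset B(x,2R)\subset B(x,12R+2D_0)$, combined with the a priori bound $\norm{\psi}_G\leq\sqrt{\beta}\,\norm{\psi}_\Theta$.

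For the Carleman step I would apply the weighted inequality
\[
\tau^{3}\int e^{2\tau\phi}|u|^{2} + \tau\int e^{2\tau\phi}|\nabla u|^{2} \;\leq\; C(d)\int e^{2\tau\phi}|\Delta u|^{2}
\]
to $u=\eta\psi$, where $\phi$ is a regularized singular radial weight centered at $x$ and $\eta$ is a smooth cutoff equal to one on $B(x,2R+\varepsilon)$ and supported in $B(x,12R+2D_0)$. The commutator $\Delta(\eta\psi)-\eta\Delta\psi$ is localized on the cutoff annulus, and using $|\Delta\psi|\leq K_V|\psi|$ to absorb the potential term into the left side forces $\tau^{3}\gtrsim K_V^{2}$, i.e.\ $\tau\sim 1+K_V^{2/3}$. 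Splitting the remaining right-hand side according to whether one is inside $B(x,\delta)$ or on the cutoff annulus, and restricting the left-hand side to $B(x,2R)$, one extracts the three-balls inequality
\[
\norm{\psi}_{B(x,2R)} \;\leq\; C_{0}\,\norm{\psi}_{B(x,\delta)}^{\alpha}\,\norm{\psi}_{B(x,12R+2D_0)}^{1-\alpha},
\]
where $\alpha\in(0,1)$ is of order $1/\log(R/\delta)$ and $C_{0}$ has the shape $\exp\!\bigl(C(d)(1+K_V^{2/3})\bigr)$.

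Feeding in the two geometric bounds $\norm{\psi}_\Theta\leq\norm{\psi}_{B(x,2R)}$ and $\norm{\psi}_{B(x,12R+2D_0)}\leq\norm{\psi}_G\leq\sqrt{\beta}\,\norm{\psi}_\Theta$, and solving for $\norm{\psi}_{B(x,\delta)}$, one arrives at
\[
\norm{\psi}_{B(x,\delta)}^{2} \;\geq\; C_{0}^{-2/\alpha}\,\beta^{-(1-\alpha)/\alpha}\,\norm{\psi}_\Theta^{2},
\]
which is \eqref{eq:aim}. Since $1/\alpha$ is of order $\log(R/\delta)$, the first factor takes the polynomial-in-$\delta$ form $\delta^{C(d)(1+K_V^{2/3})}$ predicted by \eqref{eq:CsfUC-dependence}.

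The main obstacle is the quantitative bookkeeping: one must choose the Carleman weight $\phi$ and the cutoff $\eta$ so that the constant $C_{0}$ in the single-step three-balls inequality depends only on $d$ and $K_V$ (once the geometric exponent $\alpha$ has absorbed all radius ratios), and so that the $\tau$-optimization produces exactly the $K_V^{2/3}$ scaling. The hypotheses $\delta<4R$ and $B(x,12R+2D_0)\subset G$ are precisely what is needed to accommodate the cutoff annulus and the regularization of $\phi$, and the parameter $D_{0}$ plays the role of a safety margin that can be set to zero in the application to Theorem~\ref{t:scale_free-UCP}.
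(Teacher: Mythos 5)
Your proposal follows essentially the same route as the paper's proof in Appendix A: a Carleman estimate with a singular radial weight applied to a cutoff of $\psi$, absorption of the potential term forcing $\tau \sim 1 + K_V^{2/3}$, a split of the boundary term into a near annulus at $O(\delta)$ and a far annulus at $O(R+D_0)$, and use of the ratio bound $\norm{\psi}_G^2/\norm{\psi}_\Theta^2 \le \beta$ to control the far contribution. Your repackaging through an explicit three-balls inequality before inserting the geometric bounds is a minor reorganization of the same argument (the paper restricts the weighted left-hand side directly to $\Theta$ rather than to $B(x,2R)$, and uses a Cacciopoli inequality to handle the $|\nabla\psi|^2$ commutator terms, but these are cosmetic differences), and the predicted form of the constant matches the paper's.
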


The differential inequality in \eqref{eq:subsolution} is in particular satisfied for any real eigensolution
\begin{equation}
\label{eq:eigenfunction}
-\Delta \psi+V\psi=0 \quad \mbox{a.e. on }G
\end{equation}
In our application $\psi$ arises as the restriction of an eigenfunction of a selfadjoint Schr\"odinger operator
(on a bounded domain containing $G$) with real potential. Thus there is no loss of generality to assume that the eigenfunction is real-valued as well.

If $-\Delta \psi+V\psi=E\psi$ is an eigenfunction for a non-zero eigenvalue, we can
subsume it into the potential $W:= V-E$ and obtain the analog of estimate \eqref{eq:aim}
where the parameter $K_V$ is replaced by $K_{W} = K_{V-E}$.\\[1em]

The bound \eqref{eq:subsolution} is non-linear in the sense that the constant $C_{qUC}$
depends on the (sub)solution $\psi$ through the upper bound $\beta$ on the ratio of local $L^2$-norms.
However, the constant $C_{qUC}$ is uniformly bounded if $\beta$ varies only over a compact subset of $(0,\infty)$.
This follows from the functional dependence of $C_{qUC}$ on the parameters, which we discuss next.\\

It is possible to give an explicit bound on $C_{qUC}$, namely
\begin{equation}
\label{eq:CqUC}
C_{qUC}(d,K_V, D_0, R,\delta, \beta)\geq
\frac{5}{16} \frac{1}{41} \frac{C_2^3}{C_3 K_\Delta^4} \frac{\delta^4}{R^2} \left(\frac{\delta}{48R}\right)^{2\alpha} \frac{1}{1+K_V^2}
\end{equation}
where $C_2,C_3$ are constants depending only on the dimension arising from the Carleman estimate,
 $K_{\Delta}$  is a constant arising in the choice of the differentiable cut off function $\eta$ in \eqref{eq:eta}.
Here $\alpha$ has to satisfy the lower bounds
\begin{align*}
\alpha &\geq C_2
\\
\alpha &\geq (24^5 C_3 K_V^2 R^4)^\frac{1}{3}
\\2\alpha &\geq \ln\left(\left(\frac{24RK_\Delta}{D_0}\right)^4 C_3 \frac{1+K_V^2}{D_1^2} \beta\right)
\quad \text{ where } D_1=\min(D_0,1)
\end{align*}

This explicit functional dependence shows in particular that the lower bound \eqref{eq:CqUC} on $C_{qUC}$ is
an increasing, polynomial function of $\delta\in(0,\infty)$,
an increasing, polynomial function of $1/\beta\in(0,\infty)$,
an increasing, polynomial function of $D_0\in(0,\infty)$,
a decreasing, exponential function of $K_V\in[0,\infty)$,
and a decreasing, exponential function of $R\in(0,\infty)$.
By 'polynomial' dependence we mean a power-law dependence with exponent $\geq 1$.
The dependence of the lower bound \eqref{eq:CqUC} on $C_{qUC}$ on each of the parameters is continuous.
Thus there is an uniform lower bound on $C_{qUC}$ as long as the parameters vary over a compact set.
\medskip

To make the dependence of $C_{qUC}(d,K_V,D_0,R,\delta,\beta)$ on the various parameters
more transparent we consider a somewhat specialised geometric situation in the next

\begin{cor}
\label{c:our_corollary}
Let $K_V, R, \beta\in [0, \infty), \delta \in (0,1]$.
Let $G\subset  \RR^d$  open, $x\in G$, $\Theta\subset G$ measurable,
satisfy the geometric conditions
 \begin{equation}
 \label{eq:geometric_conditions}
\diam \Theta \leq  R = \dist (x,\Theta),
\ \sqrt{d}\leq R, \text{ and }   \ B(x, 14R)\subset G,
 \end{equation}
and $V\colon G \to [-K_V,K_V]$ measurable, $\psi\in W^{2,2}(G)$ real-valued, satisfy
\begin{equation}
|\Delta \psi| \leq |V\psi| \quad \text{a.e. on }   G
\quad  \text{ and } \quad \frac{\norm{\psi}_G^2}{\norm{\psi}_\Theta^2} \leq \beta
\end{equation}
Then there exists a constant ${C}={C}(d)\in (1,\infty)$ depending only on the dimension, such that
\begin{align}
\label{eq:our_corollary}
\norm{\psi}^2_{B(x,\delta)}
&\geq
C_{qUC}
\norm{\psi}_\Theta^2
\ \text { where } \
C_{qUC}:=
\left(\frac{\delta}{{C R}}\right)^{{C} + {C} R^{4/3}K_V^{2/3} + \ln \beta}
\end{align}
If we assume additionally that $\dist(x,\Theta)$ is bounded by $2\lceil\sqrt{d}\rceil$
(This is the case in the  proof of Theorem \ref{t:scale_free-UCP}.),
\eqref{eq:our_corollary} simplifies to
\begin{align}
\norm{\psi}^2_{B(x,\delta)}
&\geq
\left(\frac{\delta}{{C}}\right)^{{C} + {C} K_V^{2/3} + \ln \beta}
\norm{\psi}_\Theta^2
\end{align}
where the constant $C\in(0,\infty)$ depends on the dimension $d$ only.
\end{cor}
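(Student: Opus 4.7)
The plan is to deduce Corollary \ref{c:our_corollary} directly from Theorem \ref{t:quantitative-UCP} and the explicit bound \eqref{eq:CqUC} on $C_{qUC}$, by fixing the auxiliary parameter $D_0$, inserting a nearly optimal choice of the exponent $\alpha$, and absorbing the remaining algebraic and logarithmic prefactors into a single power-law exponent.

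First, I apply Theorem \ref{t:quantitative-UCP} with $D_0 := 1$, so that $D_1 = \min(D_0,1) = 1$. Its geometric hypothesis $B(x,12R+2D_0) \subset G$ is implied by $B(x,14R) \subset G$ together with $R \geq \sqrt{d} \geq 1$ (which gives $12R+2 \leq 14R$), and the condition $\delta < 4R$ is immediate from $\delta \leq 1 \leq R$. Next I take
\[
\alpha := \max\bigl\{C_2,\ (24^5 C_3 K_V^2 R^4)^{1/3},\ \tfrac12\ln\!\bigl((24 R K_\Delta)^4 C_3 (1+K_V^2)\beta\bigr)\bigr\},
\]
the smallest value satisfying all three constraints listed after \eqref{eq:CqUC}. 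Estimating the maximum by the sum, one obtains for some $C_0 = C_0(d)$
\[
2\alpha \leq C_0 + C_0 R^{4/3} K_V^{2/3} + C_0 \ln R + C_0 \ln(1+K_V) + \ln\beta.
\]

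Second, I substitute this into \eqref{eq:CqUC} and reshape the right-hand side into the target form $(\delta/(CR))^{\mathrm{exp}}$ with $C = C(d)$. The monotonicity $(\delta/(48R))^{2\alpha} \geq (\delta/(CR))^{2\alpha}$ holds whenever $C \geq 48$. The polynomial prefactor
\[
A := \frac{5}{16\cdot 41}\,\frac{C_2^3}{C_3 K_\Delta^4}\,\frac{\delta^4}{R^2(1+K_V^2)}
\]
satisfies $A \geq (\delta/(CR))^k$ for any $k$ with $k\,\ln(CR/\delta) \geq \ln(1/A)$; the bounds $\delta \leq 1 \leq R$ allow one to take $k \leq C'(1 + \ln(1+K_V))$. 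Dominating the residual logarithmic contributions $\ln R$ and $\ln(1+K_V)$ by the leading power-law term $R^{4/3}K_V^{2/3}$ (after enlarging the constants) yields the first displayed bound of the Corollary.

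Finally, the second displayed inequality is obtained by specialising to $R = \dist(x,\Theta) \leq 2\lceil\sqrt{d}\rceil$, in which case $R$ depends only on $d$ and all its contributions to both the base $\delta/(CR)$ and the exponent collapse into the constant $C(d)$. The main obstacle is therefore algebraic bookkeeping: one must verify that the logarithmic contributions coming from the third constraint on $\alpha$ and from the prefactor $A$ can be simultaneously absorbed into the single power-law exponent $C + CR^{4/3}K_V^{2/3} + \ln\beta$, the regime of small $K_V$ combined with large $R$ being the most delicate. The analytical input — the Carleman-type estimate underlying \eqref{eq:CqUC} — has already been packaged into Theorem \ref{t:quantitative-UCP}, so no new PDE arguments are needed.
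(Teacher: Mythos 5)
The proposal has a genuine gap, located exactly where you flagged a "delicate" regime at the end: the choice $D_0:=1$ does not work. With $D_0=1$ the third constraint after \eqref{eq:CqUC} reads
\begin{equation*}
2\alpha \geq \ln\!\left((24 R K_\Delta)^4\, C_3\,(1+K_V^2)\,\beta\right),
\end{equation*}
which contains a residual term of order $\ln R$. Plugging this into \eqref{eq:CqUC} produces a lower bound whose logarithm scales like $(\ln R)\cdot\ln(R/\delta)$ in the regime $K_V=0$, $\beta=1$. But the target $C_{qUC}=(\delta/(CR))^{C+CR^{4/3}K_V^{2/3}+\ln\beta}$ has, in that same regime, a logarithm of order only $C\ln(CR/\delta)$, linear in $\ln R$. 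So the extra $\ln R$ in the exponent cannot be absorbed by the power-law term $R^{4/3}K_V^{2/3}$ — that term vanishes when $K_V=0$ — nor by the fixed dimensional constant $C$, since $R$ may be arbitrarily large. Your absorption step for $\ln R$ therefore fails, and no amount of enlarging $C(d)$ repairs it.

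The paper's proof avoids this by taking $D_0=R$ rather than $D_0=1$. This is the largest $D_0$ allowed, since $B(x,12R+2D_0)\subset G$ together with the hypothesis $B(x,14R)\subset G$ forces $D_0\leq R$; and since $R\geq\sqrt d\geq 1$, one still has $D_1=\min(D_0,1)=1$. With $D_0=R$ the ratio $24RK_\Delta/D_0=24K_\Delta$ becomes $R$-independent, so the third constraint on $\alpha$ becomes $2\alpha\geq\ln\!\bigl(c_4(1+K_V^2)\beta\bigr)$ with $c_4=c_4(d)$ only, and the choice $2\alpha = 2C_2 + 2(24^5C_3)^{1/3}(K_V^2R^4)^{1/3}+\ln\!\bigl(c_4(1+K_V^2)\bigr)+\ln\beta$ produces exactly the desired exponent $C+CR^{4/3}K_V^{2/3}+\ln\beta$ after absorbing the polynomial prefactor $A$ — which contributes only $O(\ln R)$, harmless because the base $\delta/(CR)$ already carries a $\ln R$ factor. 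The remaining structure of your argument (apply the theorem, pick $\alpha$ at the max of the three constraints, estimate $A$, reshape into a single power law, then specialize to $R\leq 2\lceil\sqrt d\rceil$ for the second display) matches the paper and would go through once $D_0$ is chosen correctly.
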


In the  proof of Theorem \ref{t:scale_free-UCP} we will be able tu use a $\beta$ which is a constant depending on the dimension $d$ only.

\section{Application of the UCP to Delone-Anderson models}
\label{s:Anderson-application}

While the results on unique continuation from Sections \ref{s:scale-free_ucp}
and \ref{s:quantitative-ucp} are quite general in nature, we now turn to applications
in the spectral theory of random Schr\"odinger operators.
From the physical point of view one, or even the, ultimate goal is to understand the transport
properties of the solid state which is modeled by the
random operator. The mathematical analysis of the model has to start with quite basics questions, like:
where is the spectrum located, how sensitive is it under perturbations, how do symmetries of the model translate into spectral properties, what are a priori bounds on eigenfunctions and resolvents etc.
These results are then complemented with probabilistic and combinatorial arguments
to yield statements about localization properties of the model.

The questions which are addressed in this section  concern the location
of the spectral minimum and the smearing-out of eigenvalues of random operators on a finite cube
by the average over random variables entering the model.

Usual alloy or Anderson-type potentials have a built-in ergodicity property.
We consider here random potentials which have a similar structure, but lack
the stochastic translation invariance.

\subsection{The Delone-Anderson model}
\label{s:DAmodel}

\begin{dfn}
A set $Z\subset \RR^d$ is a \emph{Delone} set if
there exist positive real numbers $\tilde M<M$, $M\in\NN$  such that
for any $x\in\RR^d$, we have $\sharp \{\DeloneSet \cap \L_{\tilde M}(x)\} \leq 1$ and
$\sharp \{ \DeloneSet \cap \L_{M}(x)\} \geq 1$,
where $\sharp$ stands for cardinality.
\end{dfn}

This implies that for the scale $M \in \NN$, each periodicity cell of the lattice $(M\ZZ)^d$
contains at least one element of $\DeloneSet$. Moreover, no periodicity cell contains more than
$\tilde N=\lceil M/\tilde M\rceil^d$ elements of $\DeloneSet$.

Note that, by definition, we can always write $Z=Z_{\Gamma_1}\cup Z_{\Gamma_2}$
as a disjoint union,
of a subset $Z_{\Gamma_1}$  which contains exactly one point in each periodicity cell of the
lattice $(M\ZZ)^d$, and $Z_{\Gamma_2}:= Z\setminus Z_{\Gamma_1}$
which intersects each periodicity cell in at most $\tilde N-1$ points.
It is then natural to enumerate the elements of $Z_{\Gamma_1}=\{z_j\}_{j\in \Gamma_1}$ by
the index set $\Gamma_1:=(M\ZZ)^d$. Similarly, the elements of
$Z_{\Gamma_2}=\{z_j\}_{j\in \Gamma_2}$ are enumerated by  some countable set $\Gamma_2$
(disjoint to $\Gamma_1$). For the Delone set $Z=Z_{\Gamma_1}\cup Z_{\Gamma_2}$  itself
we obtain the enumeration $Z=\{z_j\}_{j \in \Gamma}$ with $\Gamma= \Gamma_1 \cup \Gamma_2$.

Now, let us consider a random Schr\"odinger operator of the form $H_\omega=H_0+V_\omega$, where $H_0=-\Delta+V_0$,
and $V_0\colon \RR^d \to \RR$ is a bounded and measurable potential.  The Delone-Anderson potential is given by
\begin{equation}
 V_\omega(x):= \sum_{j \in\Gamma} \ \omega_j u_j(x)
\end{equation}
where $\Gamma$ is the index set of a Delone set $Z$, and we have the following
\begin{hyp}
\label{hyp:Delone_Anderson}~
\begin{enumerate}[(i)]
\item
The random variables $\omega_j, j\in \Gamma,$ are independent with probability distributions $\mu_j$, such that
for some $m>0$, $ \supp \mu_j \subset [-m, m]$, $\forall j\in \Gamma$.
\item
Let $ s\colon [0,\infty) \to[0,1]$ be the global modulus of continuity of the family
$\{\mu_j\}_{j\in \Gamma_1}$, that is,
\begin{equation}
\label{definition-s-mu-epsilon}
\forall j \in \Gamma_1: \quad
\sup\Big\{\mu_j\Big(\Big[E-\frac{\epsilon}{2},E+\frac{\epsilon}{2}\Big]\Big) \, \Big| \, E \in \RR\Big\}
\leq s(\epsilon)
\end{equation}
\item
Let $0 < \delta_- < \delta_+<\infty$, $0 < C_- \leq C_+ <\infty$.
The sequence of measurable functions $u_j \colon \RR^d \to \RR, j \in \Gamma$ is
such that
\begin{align*}
\forall j \in \Gamma_1:
& \quad  C_- \chi_{B(z_j,\delta_-)} \leq u_j \leq C_+ \chi_{B(z_j,\delta_+)}
\\
& \quad  \text{ and } B(z_j,\delta_-) \subset \L_M(j)
\\
 \forall j \in \Gamma_2:& \quad  |u_j| \leq C_+ \chi_{B(z_j,\delta_+)}
 \end{align*}
\end{enumerate}
\end{hyp}

Then, we can decompose the random potential as
\begin{equation}
\label{eq:decompse_V}
 \sum_{j \in\Gamma} \omega_j u_j(x)
= \sum_{j \in\Gamma_1} \omega_j u_j(x) +\sum_{j \in\Gamma_2} \omega_j u_j(x)
\end{equation}
In the proofs we will employ the first part of the potential in \eqref{eq:decompse_V}
to lift eigenvalues. Of the second part we will only use the fact that it is a bounded function, uniformly  in $\omega$.
It can be subsumed into the background potential $V_0$, since the random variables entering in $ V_\omega$ are independent.

For simplicity we will assume in the proofs of Theorems \ref{t:Wegner}, \ref{t:uncertainty}, and \ref{t:optWE} that $M=1$.
The statement of all  theorems is still true for $M>1$. If one is interested in the dependence on the parameter $M$,
Corollary \ref{c:scale_free-UCP} gives its influence on the constant $C_{sfUC}$, from which the constant $C_W$ in the Wegner estimate follows, see for instance \eqref{eq:CWdependence}. In the case of Theorem  \ref{t:optWE}  this requires going through the proof in
\cite{CombesHK-07}.

Delone-Anderson operators are rather general and specific cases are given by the following more familiar
examples
\begin{exm}
\begin{description}
 \item[Standard alloy type potential]
Let $\omega_j, j \in \ZZ^d$ be an i.i.d. sequence of bounded random variables,
and $u$ a fixed  bounded single site potential of compact support such that for some $C_-, \delta_->0$
\[
 u \geq C_- \chi_{B(0,\delta_-)}
\]
Then the alloy type potential
\[
 V_\omega(x) := \sum_{j \in \ZZ^d} \omega_j \ u (x-j)
\]
satisfies the assumptions of our model with $M=1$.
  \item[Alloy type potential with random displacements]
Let $\omega_j, j \in \ZZ^d$ and $u$ be as above and $\xi_j, j \in \ZZ^d$
an i.i.d. sequence of random variables, which are independent of the sequence $\omega_j, j \in \ZZ^d$,
and take values in some ball $B(0, \tilde R) \subset \RR^d$. For each fixed configuration $\{\xi_j\}_{j \in \ZZ^d}$
the alloy-displacement potential
\[
 V_{\omega, \xi} (x) := \sum_{j \in \ZZ^d} \omega_j \ u (x-j- \xi_j)
\]
satisfies the Assumption \ref{hyp:Delone_Anderson} of our model, in particular (iii),
if   $2(\tilde R+\delta_-)\leq M$.
\end{description}

\end{exm}

Denote by $H_{0,L,x}$, $H_{\omega,L,x}$ the finite volume operators obtained by restricting $H_0$ and $H_\omega$,
respectively, to the box $\L_L(x)$ with either Dirichlet or periodic boundary conditions.

\subsection{Wegner estimate for Delone-Anderson models at all energies}
\label{s:Wegner}

Our first result on the model described in the last paragraph is a
Wegner estimate which is valid for all bounded intervals on the energy axis.
\begin{thm}
\label{t:Wegner}
Let $H_\omega$ be a random Schr\"odinger operator as in Assumption \ref{hyp:Delone_Anderson}.
Then for each $E_0\in \RR$ there exists a constant $C_W$ such that, for all $E\le E_0$ and
$\epsilon \le 1/3$, all $L\in 2\NN+1$ and $x \in \RR^d$
\begin{equation}
\label{e-WE}
\EE\{\Tr [ \chi_{[E-\epsilon,E+\epsilon]}(H_{\omega, L,x}) ]\}
\le C_W \ s(\epsilon) \, |\ln \, \epsilon|^d \ |\Lambda_L|
\end{equation}
\end{thm}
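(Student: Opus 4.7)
The plan is to derive the Wegner estimate from Theorem \ref{t:scale_free-UCP} by a Combes--Hislop type spectral-averaging argument, adapted to the non-ergodic Delone-Anderson setting and compensating for the absence of the uncertainty principle $(\clubsuit)$ for spectral projectors by a tiling argument at scale $|\ln\epsilon|$. First I would reduce to $M=1$, since the general case follows via Corollary \ref{c:scale_free-UCP}. Decomposing $V_\omega$ as in \eqref{eq:decompse_V} and using the independence of the $\omega_j$, I would condition on $\{\omega_j\}_{j \in \Gamma_2}$: by Hypothesis \ref{hyp:Delone_Anderson}(iii), this contribution has sup-norm at most $m C_+ \tilde N$, uniformly in $\omega$, and may be absorbed into the deterministic background $V_0$. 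The analysis then reduces to the one-impurity-per-cell model with $u_j \geq C_- \chi_{B(z_j,\delta_-)}$ and $B(z_j,\delta_-) \subset \Lambda_1(j)$, $j \in \Gamma_1 = \ZZ^d$.

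The key analytic input is an aggregate eigenvalue-lifting estimate. Let $\lambda_n(\omega)$ denote the eigenvalues of $H_{\omega,L,x}$ with normalized eigenfunctions $\psi_n$. The Hellmann--Feynman formula gives $\partial_{\omega_j}\lambda_n = \langle \psi_n, u_j \psi_n\rangle \geq C_- \|\psi_n\|_{B(z_j,\delta_-)}^2$. Summing over $j \in \Gamma_1\cap\Lambda_L(x)$ and invoking Theorem \ref{t:scale_free-UCP}, applied to $\psi_n$ with $K_V$ uniformly controlled by $\|V_0\|_\infty + m C_+ + |E_0|$, I obtain
\[
\sum_{j \in \Gamma_1\cap\Lambda_L(x)} \frac{\partial \lambda_n(\omega)}{\partial \omega_j}\ \geq\ C_- \, C_{sfUC}.
\]
This $n$-, $\omega$-, $L$-, $x$-independent positive lower bound on the \emph{total} eigenvalue derivative is the engine of the Wegner estimate.

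With the lifting in hand, I would apply Stollmann's monotonicity lemma to translate it into a bound on $\EE\{\Tr \chi_{[E-\epsilon,E+\epsilon]}(H_{\omega,L,x})\}$. Had the uncertainty principle $(\clubsuit)$ been available, this would yield the optimal $Cs(\epsilon)|\Lambda_L|$ bound as in \cite{CombesHK-07}. Since $(\clubsuit)$ fails at general energies, I would tile $\Lambda_L(x)$ by cubes of sidelength $\ell\sim|\ln\epsilon|$ and apply the lifting estimate locally on each tile, controlling cross-tile contributions by a Combes--Thomas decay estimate at distance $\ell$. Tallying the $|\Lambda_L|/\ell^d$ tiles, each contributing up to $O(\ell^d)$ eigenvalues in the target interval, produces the factor $s(\epsilon)\,|\ln\epsilon|^d\,|\Lambda_L|$.

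The main technical obstacle is this last step: in the non-ergodic setting there is no Floquet reduction, so decoupling of tiles must be executed by purely combinatorial-geometric means --- the ``dominating boxes'' alluded to in the introduction --- while carefully tracking how $C_W$ depends on $E_0$ and on the model parameters $M,\tilde M, C_\pm, \delta_\pm, m$ to ensure uniformity in $L$ and $x\in\RR^d$. A subsidiary issue is to check that the restriction $\epsilon\leq 1/3$ is merely a convenience (so that $|\ln\epsilon|\geq\ln 3$) rather than an essential limitation of the method.
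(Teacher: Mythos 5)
Your first half is exactly the paper's argument: reduce to $M=1$, absorb the $\Gamma_2$-part of $V_\omega$ into the background, and combine Hellmann--Feynman with Theorem \ref{t:scale_free-UCP} to obtain the uniform lower bound
$\sum_{j\in\Gamma_1\cap\Lambda_L} \partial_{\omega_j}\lambda_n(\omega) \geq C_- C_{sfUC}=:\kappa$.
Where you diverge is in converting this eigenvalue-lifting estimate into the trace bound \eqref{e-WE}. The paper does \emph{not} tile $\Lambda_L$ at scale $|\ln\epsilon|$ nor invoke Combes--Thomas decay; instead it works with a smooth monotone cutoff $\rho_\epsilon$, uses monotonicity of $\rho_\epsilon$ and the lifting estimate to pass from $\rho_\epsilon(H_{\omega,L}+4\epsilon)$ to $\rho_\epsilon(H_{\omega,L}+4\epsilon_\kappa \sum_k u_k)$, telescopes over a lattice enumeration via the partial sums $W_n=\sum_{m\leq n}u_{k(m)}$, conditions on the single random variable $\omega_{k(n)}$ in each summand, and then applies the partial-integration Lemma \ref{l-sme} together with the bound of Hundertmark--Killip--Nakamura--Stollmann--Veseli\'c on the spectral shift function (Corollary \ref{c-SSFbound}). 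It is precisely this SSF bound that yields the $|\ln\epsilon|^d$ factor; $L^d$ then comes from the $N=L^d$ telescope terms. The non-ergodicity is handled automatically because both the lifting constant $\kappa$ and the SSF bound are uniform in $L$, $x$, and $\omega$ --- no tiling or decoupling is needed.

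Two concrete problems with your sketch of the second half. First, your ``dominating boxes'' invocation is misplaced: dominating boxes enter the proof of Theorem \ref{t:scale_free-UCP} in Section \ref{s:proof-scale_free-UCP}, not the Wegner proof, where they play no role. Second, the tiling bookkeeping as stated does not produce $|\ln\epsilon|^d$: with $|\Lambda_L|/\ell^d$ tiles each contributing $O(\ell^d)$ eigenvalues and $s(\epsilon)$ per eigenvalue you obtain $s(\epsilon)|\Lambda_L|$, not $s(\epsilon)|\ln\epsilon|^d|\Lambda_L|$. A Combes--Thomas/tiling strategy in the spirit of \cite{CombesHK-03} could conceivably be adapted, but you would need to spell out how the extra logarithm arises (typically from eigenvalue-counting error terms at scale $\ell\sim|\ln\epsilon|$), and that is a genuinely different and considerably more involved argument than the one in the paper. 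The missing idea you need is the spectral shift function estimate: once you know that the trace of a difference $\rho_\epsilon(H+u)-\rho_\epsilon(H)$ is bounded by $C_E|\ln\epsilon|^d$ uniformly in the background and box size, the proof closes in a few lines without any geometric decoupling.
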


Let us discuss the dependence of the Wegner constant $C_W$ in detail.
\begin{itemize}
\item
 The constant $C_W$ depends only on $E_0$, $\|V_0\|_\infty$, $m$, $\tilde N$, $C_-$, $C_+$, $\delta_-$,
 $\delta_+$ and $M$.
 \item
 To understand the dependence of $C_W$ on these various parameters,
let us assume for the moment
that all potentials $V_0$ and $u_j$ and random variables $\omega_j$ entering the model are non-negative.
Then
\begin{equation}
\label{eq:CWdependence}
 C_W=C_{E_0} \,  \cdot \,  \left\lceil \frac{4}{C_-\, C_{sfUC}} \right\rceil
\end{equation}
 where $C_{E}= K_1 {\rm e} ^{E+1}+ 2^d K_2$ is the constant from Corollary \ref{c-SSFbound}
which depends only on the dimension $d$, the energy bound $E_0$, and on $\delta_+$,
and $C_{sfUC}$ is the constant from the scale-free unique continuation principle, Theorem  \ref{t:scale_free-UCP}.
\item
If we allow now $V_0$ and $u_j, j\in \Gamma_2$ to take on both signs, the constant $C_E$ will additionally depend on
\[
 \sup_\omega \sup_x (V_0+ V_\omega)^-(x)
\]
 where $f^-:= -\min (0, f)$.
\end{itemize}
\subsection{Proof of the Wegner estimate \ref{t:Wegner}}
Recall that by the Assumption \ref{hyp:Delone_Anderson} the potential is uniformly bounded in the randomness
and the space coordinate $x$.
Thus we may  without loss of generality assume in the proof that
\[
 \inf_\omega \inf_x \left( V_0(x) + V_\omega (x) \right)\geq 1
\]
Thus, even if we substract a small value $\epsilon>0$ from the potential, it will remain non-negative.

For the reader's convenience we recall  certain tools from \cite{HundertmarkKNSV-06}
which will be used in the proof.  The first is a partial integration formula
  \cite[Lemma 6]{HundertmarkKNSV-06} for singular distributions:

\begin{lem}
\label{l-sme}
Let $\mu$ be a probability measure with support in $(a,b)$,
$\phi\in C^1(\RR)$ be a non-decreasing, bounded function,
and $s(\epsilon)$ as defined in \eqref{definition-s-mu-epsilon}.
Then for any $\epsilon>0$,
\[
\int_\RR [\phi(\lambda+\epsilon)-\phi(\lambda)] \, d\mu(\lambda)
\leq s( \epsilon)\cdot [\phi(b+\epsilon)-\phi(a)]
\]
\end{lem}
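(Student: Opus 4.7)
The plan is to convert the left-hand side into a double integral by writing $\phi(\lambda+\epsilon)-\phi(\lambda)=\int_\lambda^{\lambda+\epsilon}\phi'(t)\,dt$, swap the order of integration using Fubini's theorem, and then use the modulus of continuity to bound the $\mu$-integral of an indicator of an interval of length $\epsilon$.

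More precisely, since $\phi\in C^1(\RR)$ is bounded and non-decreasing, $\phi'\geq 0$ and integrable on bounded intervals, and we may rewrite
\[
\int_\RR [\phi(\lambda+\epsilon)-\phi(\lambda)]\,d\mu(\lambda)
=\int_\RR\!\int_\RR \chi_{[\lambda,\lambda+\epsilon]}(t)\,\phi'(t)\,dt\,d\mu(\lambda).
\]
Fubini applies because the integrand is non-negative, yielding
\[
\int_\RR \phi'(t)\,\mu\bigl([t-\epsilon,t]\bigr)\,dt.
\]

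The next step is to observe that by the definition \eqref{definition-s-mu-epsilon} of $s$, every interval of length $\epsilon$ has $\mu$-mass at most $s(\epsilon)$ (since any such interval can be recentered as $[E-\epsilon/2,E+\epsilon/2]$). In particular $\mu([t-\epsilon,t])\leq s(\epsilon)$ for every $t$. Moreover, since $\supp\mu\subset(a,b)$, the factor $\mu([t-\epsilon,t])$ vanishes outside $t\in(a,b+\epsilon)$, so the integrand is supported in that interval. Using $\phi'\geq 0$ we obtain
\[
\int_\RR \phi'(t)\,\mu\bigl([t-\epsilon,t]\bigr)\,dt
\leq s(\epsilon)\int_a^{b+\epsilon}\phi'(t)\,dt
= s(\epsilon)\,[\phi(b+\epsilon)-\phi(a)],
\]
which is the desired inequality.

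There is no real obstacle here; the whole content is the Fubini swap, which is legitimate because everything in sight is non-negative, together with the sharp use of the fact that $s(\epsilon)$ bounds the $\mu$-mass of an \emph{arbitrary} (not only symmetric) interval of length $\epsilon$. The only mild care is to note that monotonicity of $\phi$ lets us drop absolute values and justifies replacing $\phi'(t)\chi_{(a,b+\epsilon)}(t)$ with the bound $\phi'(t)$ on $(a,b+\epsilon)$ without losing the inequality direction.
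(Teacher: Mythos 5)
Your proof is correct. The Fubini interchange is justified by non-negativity of the integrand (since $\phi'\geq 0$), the identification $\{(\lambda,t):\lambda\leq t\leq\lambda+\epsilon\}=\{(\lambda,t):t-\epsilon\leq\lambda\leq t\}$ gives $\mu([t-\epsilon,t])$ as the inner integral, recentering $[t-\epsilon,t]$ as $[E-\epsilon/2,E+\epsilon/2]$ with $E=t-\epsilon/2$ applies the definition of $s(\epsilon)$, and the support restriction to $(a,b+\epsilon)$ together with the fundamental theorem of calculus yields the claimed bound. Note that the paper itself does not prove this lemma — it is quoted verbatim from \cite[Lemma 6]{HundertmarkKNSV-06} as a known tool — so there is no internal proof to compare against; your Fubini/layer-cake argument is the standard one and is essentially what the cited source does under the name of ``partial integration.''
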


Next, we state a bound on the spectral shift function.
Let $H_0$ be the Laplace operator on the cube $\Lambda_L(x)$ with Dirichlet or periodic boundary conditions,
$V$ a bounded, non-negative potential, and $u$ a compactly supported non-negative single-site potential.
Set $H_1:=H_0+V$ and $H_2=H_1+u$. Both Schr\"odinger operators are selfadjoint and
lower-semibounded with purely discrete spectrum.
We define the \emph{spectral shift function} $\xi\colon \RR\to\RR$ associated to  the pair of operators $H_1,H_2$ by
\begin{multline}
\xi(\lambda)= \sharp \{\text{eigenvalues of $H_2$ not exceeding } \lambda\}
\\- \sharp \{\text{eigenvalues of $H_1$ not exceeding } \lambda \}
\end{multline}
It satisfies the Lifshitz-Krein's trace identity
\begin{equation}
\label{eq:SSF}
\Tr [\rho(H_2)-\rho(H_1)]=\int \rho'(\lambda)\xi(\lambda)d\lambda
\end{equation}
for all $\rho\in C^\infty$ with compactly supported derivative.
Note that the left hand side of  \eqref{eq:SSF} reduces to a sum over finitely many eigenvalues
since the spectrum is discrete and $\supp \rho'$ compact.
Theorem 2 in \cite{HundertmarkKNSV-06} states:
\begin{thm}
Let $\xi$ be the spectral shift function for the pair $H_1,H_2$.  There exist constants $K_1,K_2$
depending only on $d$ and  $\diam \supp u$, such that for any measurable, bounded $f\colon\RR\to\RR$ with $\supp f \subset [a,b]$
\begin{equation}
\int f(\lambda)\xi(\lambda)d\lambda \leq K_1 e^b+K_2\left( \ln(1+\norm{f}_\infty)^d \right) \norm{f}_1
\end{equation}
\end{thm}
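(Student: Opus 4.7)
The plan is to apply the Lifshitz--Krein trace identity \eqref{eq:SSF} (after regularising $f$, if needed, by a $C^1$ approximation) to rewrite
\[
\int f(\lambda)\,\xi(\lambda)\,d\lambda = \Tr\bigl[F(H_2)-F(H_1)\bigr],
\]
where $F$ is a compactly supported antiderivative of $f$ with $F\equiv 0$ on $(-\infty,a)$. The right-hand side is well-defined since $H_1$ and $H_2$ have purely discrete spectrum. Because $u=H_2-H_1$ is bounded and supported in a set of diameter $\diam\,\supp u$, a cyclicity/Combes--Thomas type localization reduces this trace to one on a bounded region $\Omega$ that is a fixed enlargement of $\supp u$, at the cost of constants depending only on $d$ and $\diam\,\supp u$.

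For the $K_1\,e^b$ contribution, I would combine the pointwise min--max inequality
\[
|\xi(\lambda)| \leq N_1^\Omega(\lambda+\|u\|_\infty) - N_1^\Omega(\lambda),
\]
where $N_1^\Omega$ denotes the eigenvalue counting function of the localised $H_1$, with the Weyl/heat-kernel bound
\[
N_1^\Omega(\lambda) \leq e^{\lambda}\,\Tr\,e^{-H_1^\Omega} \leq C_{d,\Omega}\,e^{\lambda},
\]
the second inequality being a Gaussian heat-kernel estimate on the bounded domain $\Omega$. Integrating $|f|$ against this and using the crude bound $|f|\leq \|f\|_\infty$ on $[a,b]$ produces the $K_1\,e^b$ piece up to a multiplicative constant.

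The $(\ln(1+\|f\|_\infty))^d$ factor in the second term is the delicate ingredient and the principal obstacle. The plan is to decompose $f$ dyadically according to its size, $f=\sum_{k=0}^{k_\ast} f_k$ with $f_k$ supported on $\{|f|\sim 2^k\}$ and $k_\ast\sim\log_2(1+\|f\|_\infty)$, and to apply the Weyl estimate piecewise, using $\|f_k\|_1\leq\|f\|_1$ in each slab. The power $d$ then emerges from the dimension-dependent local Weyl asymptotics $N_1^\Omega(\lambda+1)-N_1^\Omega(\lambda)=O((1+\lambda)^{(d-2)/2})$ combined with the iterated logarithmic summation. The genuine difficulty lies in the interpolation step: one must arrange the splitting so that the contribution is captured by $\|f\|_1$ rather than by $\|f\|_\infty(b-a)$, and so that the final estimate decomposes additively into the two claimed terms with no multiplicative cross terms coupling $e^b$ to $\|f\|_\infty$.
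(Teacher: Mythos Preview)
First, a framing remark: the paper does not supply its own proof of this statement. It is quoted as Theorem~2 of \cite{HundertmarkKNSV-06} and used as a black box; the proof lives in that reference. So there is no in-paper argument to compare against, only the argument of \cite{HundertmarkKNSV-06}, which differs substantially from your sketch.

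Your proposal has a genuine gap at the localization step. The min--max bound you invoke,
\[
|\xi(\lambda)| \le N_1(\lambda) - N_1(\lambda-\|u\|_\infty),
\]
is valid with $N_1$ the counting function of $H_1$ on the \emph{full} box $\Lambda_L$, and that difference is of order $L^d$ by Weyl's law, so it gives nothing uniform in $L$ --- which is the whole point of the theorem. Replacing $N_1$ by the counting function $N_1^\Omega$ of an operator on a fixed neighbourhood $\Omega$ of $\supp u$ is precisely what has to be proved, and a vague appeal to ``Combes--Thomas type localization'' does not produce such a pointwise bound on $\xi$. Equally, your mechanism for the factor $(\ln(1+\|f\|_\infty))^d$ does not work: a dyadic decomposition of $f$ into $O(\log\|f\|_\infty)$ height-slabs, each estimated by $\|f\|_1$, yields at most one power of the logarithm, and the Weyl increment $N^\Omega(\lambda+1)-N^\Omega(\lambda)\sim\lambda^{(d-2)/2}$ is polynomial in $\lambda$, not logarithmic, so it cannot supply the missing $d-1$ powers.

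The route taken in \cite{HundertmarkKNSV-06} is different in kind. It rests on the $L^p$ theory of the spectral shift function \cite{CombesHN-01}: for convex $F$ with $F(0)=0$ one has $\int F(|\xi|)\le \Tr F(|T|)$ for an operator $T$ built from $u$ and $e^{-H_1}$, whose singular values decay like $e^{-c\,n^{2/d}}$ because $u$ is supported in a set of fixed diameter. This yields $L$-independent $L^p$ bounds on $\xi\,\chi_{(-\infty,b]}$ with explicit growth in $p$; applying H\"older with the choice $p\sim\ln(1+\|f\|_\infty)$ then produces the $(\ln(1+\|f\|_\infty))^d$ factor, while the $K_1 e^b$ term arises from the heat-kernel normalisation of $T$. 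The key idea you are missing is that the $L$-independence and the exponent $d$ both come from trace-ideal estimates on an operator localised by $u$ itself, not from a geometric decoupling of the box.
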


Let  $\epsilon \in (0,1/3]$ and $\rho_\epsilon$ be a smooth, non-decreasing function
such that on $(-\infty, -\epsilon]$ it is identically equal to $-1$,
on $[\epsilon, \infty)$ it is identically equal to zero, and $\|\rho_\epsilon'\|_\infty \le 1/\epsilon$.
The previous theorem combined with Lifshitz-Krein's trace identity gives the following

\begin{cor}
\label{c-SSFbound}
Let $H_1,H_2$ and $\rho_\epsilon$ be as above. There is a constant $C_E$
depending only on $E$, $d$, $\diam \, \supp \, u$, such that
\begin{equation} \label{e-SSF}
\Tr \left [ \rho_\epsilon (H_2)-\rho_\epsilon (H_1)  \right  ] \le C_E \, |\ln \epsilon|^d
\end{equation}
The constant  $C_E $ can be chosen equal to $ K_1 {\rm e} ^{E+1}+ 2^d K_2$
with $K_1, K_2$ from the last Theorem.
\end{cor}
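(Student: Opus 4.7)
The plan is to combine the Lifshitz-Krein trace identity \eqref{eq:SSF} with the bound on $\int f(\lambda)\xi(\lambda)\,d\lambda$ supplied by the theorem just above. Taking $\rho = \rho_\epsilon$ in \eqref{eq:SSF} rewrites the left-hand side of \eqref{e-SSF} as $\int_\RR \rho_\epsilon'(\lambda)\,\xi(\lambda)\,d\lambda$, so the entire task reduces to applying the previous theorem with the test function $f=\rho_\epsilon'$.

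The three numerical inputs demanded by that theorem are immediate from the construction of $\rho_\epsilon$: the support condition gives $\supp \rho_\epsilon' \subset [-\epsilon,\epsilon]$, so we may take $b=\epsilon$; the hypothesis $\|\rho_\epsilon'\|_\infty \le 1/\epsilon$ yields the $L^\infty$ bound; and monotonicity together with the prescribed boundary values gives $\|\rho_\epsilon'\|_1 = \rho_\epsilon(\epsilon)-\rho_\epsilon(-\epsilon)=1$. Substituting these into the theorem produces an estimate of the shape $K_1 e^b + K_2\bigl(\ln(1+1/\epsilon)\bigr)^d$. The $E$-dependence claimed in the corollary is recovered by running the same argument with the shifted function $\rho_\epsilon(\cdot - E)$, whose derivative is supported in $[E-\epsilon,E+\epsilon]$, so that the parameter $b$ in the theorem becomes $E+\epsilon \le E+1$ under the standing assumption $\epsilon \le 1/3$.

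It remains to package the result in the form $C_E |\ln\epsilon|^d$. For $\epsilon \le 1/3$ one has $\ln(1+1/\epsilon) \le \ln(2/\epsilon) \le 2|\ln\epsilon|$, so the $K_2$-term is bounded by $2^d K_2 |\ln\epsilon|^d$; and since $|\ln\epsilon|^d \ge (\ln 3)^d > 0$ in the same regime, the bounded term $K_1 e^{E+1}$ is absorbed into $|\ln\epsilon|^d$ with an adjusted constant, producing $C_E = K_1 e^{E+1}+2^d K_2$ as claimed. The argument is essentially mechanical; the only step requiring care is the elementary logarithmic comparison $\ln(1+1/\epsilon)\le 2|\ln\epsilon|$, whose validity rests precisely on the restriction $\epsilon \le 1/3$.
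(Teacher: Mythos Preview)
Your proof is correct and follows exactly the route the paper intends: apply the Lifshitz--Krein identity \eqref{eq:SSF} with $\rho=\rho_\epsilon$, then invoke the preceding theorem with $f=\rho_\epsilon'$, and finally do the elementary numerics. The paper itself gives no details beyond the phrase ``combined with Lifshitz--Krein's trace identity,'' so you have essentially reconstructed what is implicit. Your observation that the $E$-dependence of $C_E$ arises from the shifted function $\rho_\epsilon(\cdot-E)$ (equivalently, from the way the corollary is actually applied in the Wegner proof, where the relevant operators are effectively centered near energy $E$) is exactly right and clarifies a point the paper leaves tacit; likewise the check that $|\ln\epsilon|\ge\ln 3>1$ lets one absorb $K_1 e^{E+1}$ without spoiling the claimed constant.
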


\begin{proof}[Proof of Theorem \ref{t:Wegner}]
Let  $\rho:=\rho_\epsilon$ be as before. Then
\[
\chi_{[E-\epsilon,E+\epsilon ]} (x)
\le \rho(x-E+2\epsilon) -\rho(x-E-2\epsilon)
\]
In the remainder of the proof we will suppress the box center $x$ in the notation.
All estimates which follow are uniform w.r.t. $x\in \RR^d$.
Let $\lambda_n^L(\omega)$ denote the eigenvalues of $H_{\omega,L}$
enumerated in non-decreasing order and counting multiplicities
and $\psi_n$ the  normalised eigenvectors corresponding to $\lambda_n^L(\omega)$.
Then
\[
\lambda_n^L(\omega)
=
\la \psi_n, H_{\omega,L} \psi_n\ra
=
\int_{\L_L}dx \, \bar \psi_n (H_{\omega,L} \psi_n )
\]
Define the vector $ e=(e_j)_{j\in\Gamma}$ by  $e_j=1$ for $j\in\Gamma_1$  and $e_j=0$ for $j\in\Gamma_2$.
Consider the monotone shift of $V_\omega$
\[
 V_{\omega+ \epsilon \cdot e}
= \sum_{j \in\Gamma_1} (\omega_j+ \epsilon ) u_j
+
 \sum_{j \in\Gamma_2} \omega_j  u_j
\]
and recall that $\tL_L= \Lambda \cap \Gamma_1$.
By first order perturbation theory or the Hellmann-Feynman-theorem
\[
 \frac{d }{d\epsilon} \lambda_n^L(\omega+ \epsilon \cdot e)
= \langle \psi_n, \sum_{j \in \tL}  u_j \,   \psi_n \ra
\]
By the scale free unique continuation
principle there exists a constant $C_{sfUC}$ depending on the energy $E_0$,
$\delta_-$ and the overall supremum $\sup_\omega \sup _x |V_{0}(x) +V_\omega(x)|  $
of the potential such that
\begin{equation*}
\sum_{j \in \tL}  \la \psi_n, u_j \,  \psi_n \ra
\geq
C_- \sum_{j \in \tL}\la \psi_n,  \chi_{B(z_j,\delta_-)}\psi_n \ra \geq
C_-\cdot C_{sfUC}
=: \kappa
\end{equation*}
where we used that $\norm{\psi}_{\Lambda}^2=1$.
Now
\begin{align*}
\lambda_n^L(\omega+ \epsilon \cdot e)
&=
\lambda_n^L(\omega) + \int_0^\epsilon dt \,  \frac{d \lambda_n^L(\omega+ t \cdot e) }{d t}
\\
&\geq
\lambda_n^L(\omega) + \int_0^\epsilon dt \,  \kappa  = \lambda_n^L(\omega) + \epsilon \, \kappa
\end{align*}
Since $\rho$ is monotone
\[
 \rho (\lambda_n^L(\omega+ \epsilon \cdot e) )
\geq
\rho (\lambda_n^L(\omega) + \epsilon \, \kappa )
\]
We calculate the trace using eigenvalues.
\begin{align}
 \label{e-shift-bound}
\Tr  [\rho (H_{\omega, L}+ \epsilon\kappa)]
&= \sum_{n} \rho(\lambda_n^L(\omega)+ \epsilon\kappa)
\leq
 \sum_{n} \rho(\lambda_n^L( \omega + \epsilon e))
\\  \nn
&=
\Tr\Big[\rho (H_{\omega+ \epsilon\cdot e, L})\Big]
=
\Tr\Big[\rho (H_{\omega, L}+ \epsilon\sum_{k \in \tL} u_k ) \Big]
\end{align}
By writing $\epsilon_\kappa=\epsilon/\kappa$, we have
\begin{equation}
\Tr  [\rho (H_{\omega,L}+ \epsilon)]
\le \Tr\Big[\rho (H_{\omega,L}+ \epsilon_\kappa\sum_{k \in \tL} u_k ) \Big]
\end{equation}
Since $L\in 2\NN+1$, the box $\Lambda_L$ is decomposed in $N:=L^d$ unit cubes. We
enumerate the sites in $\Lambda_L$ by $ k\colon \{1, \dots, N\} \to \tL_L = \Lambda_L \cap
\Gamma_1$, $n\mapsto k(n)$ and set
\[
W_0 \equiv 0, \quad  W_n =\sum_{m=1}^{n} u_{k(m)}, \qquad n=1,2,\dots, N
\]
Thus
\begin{equation}
\label{e-project}
\begin{split}
&
\EE \{\Tr [\chi_{[E-\epsilon,E+\epsilon]} (H_{\omega,L}) ]\}\\
 & \le
\EE \{\Tr [ \rho(H_{\omega,L}+2\epsilon)-\rho(H_{\omega,L}-2\epsilon)]\}
\\
& =\EE \{\Tr [ \rho(H_{\omega,L}-2\epsilon+4\epsilon)-\rho(H_{\omega,L}-2\epsilon)]\}
\\
& \le
\EE \{\Tr [ \rho(H_{\omega,L}-2\epsilon+4\epsilon_\kappa W_{L})-\rho(H_{\omega,L}-2\epsilon)]\}
\\    	
 &\le
\EE \left \{\sum_{n=1}^{N}\Tr [ \rho(H_{\omega,L}-2\epsilon+4\epsilon_\kappa W_{n})-
\rho(H_{\omega,L}-2\epsilon+4\epsilon_\kappa W_{n-1})] \right
\}
 \end{split}
\end{equation}
We fix $n \in \{1, \dots, N\}$, denote  
\[
\omega^\perp := \{\omega_k^\perp\}_{k \in \tL}, \qquad
\omega_k^\perp :=\begin{cases} 0 \quad &\text{if $k=k(n)$}, \\
\omega_k \quad &\text{if $k\neq k(n)$}, \end{cases}
\]
and set
\[
\phi_n(t) := \Tr\big[\rho\big(H_{\omega,L}^\perp-2\epsilon +4\epsilon_\kappa W_{n-1}+t \cdot
u_{k(n)}\big)\big], \quad t\in\RR.
\]
The function $\phi_n$ is continuously differentiable, monotone increasing and bounded.
By definition of $\phi_n$,
\begin{multline*}
\EE \{\Tr [ \rho(H_{\omega,L}-2\epsilon+4\epsilon_\kappa  W_n))
-\rho(H_{\omega,L}-2\epsilon+4\epsilon_\kappa  W_{n-1})]\}
\\  \le
\EE \{\int [\phi_n(\omega_{k(n)}+4\epsilon_\kappa)-\phi_n(\omega_{k(n)})]\, d\mu(\omega_{k(n)})\}
\end{multline*}
Let $a=(\inf\supp \mu)-1$ and $b= (\sup\supp \mu)+1$.
Using Lemma \ref{l-sme} together with Corollary \ref{c-SSFbound},  we have
\begin{align*}
\int [\phi_n(\omega_{k(n)}+4\epsilon_\kappa)-\phi_n(\omega_{k(n)})]\, d\mu(\omega_{k(n)})
&\le
s(4\epsilon_\kappa) [\phi_n(b+4\epsilon_\kappa)-\phi_n(a)]
\\
&\le C_E \, s(4\epsilon_\kappa) \left( \ln(1/\epsilon)\right)^d
\end{align*}
which implies that \eqref{e-project} is bounded by
\begin{align*}
C_E \sum_{n=1}^{N} s(4\epsilon_\kappa) \left( \ln(1/\epsilon)\right)^d
\leq
C_E  \, s\left(4\epsilon/\kappa\right) \left( \ln(1/\epsilon)\right)^d \, L^d
\end{align*}
where we apply Corollary \ref{c-SSFbound} successively $N$ times.
Due to monotonicity  and additivity of measures we have
\[
s(\epsilon) \leq s( t\epsilon) \leq s( \ceil{t}\epsilon) \leq \ceil{t} s( \epsilon)
\text{  for every } t\in [1,\infty)	
\]
Thus we obtain
\begin{equation}
\EE\{\Tr [ \chi_{[E-\epsilon,E+\epsilon]}(H_{\omega, L}) ]\}
\le
C_E  \ceil{4/\kappa} \, s(\epsilon) \left( \ln(1/\epsilon)\right)^d \, L^d	
\end{equation}

\end{proof}

\subsection{Perturbation of the spectral minimum}
\label{s:spectral_minimum}
We derive a lower bound on the shift of the lowest eigenvalue of a box Hamiltonian.
This has several important consequences.
\begin{itemize}
\item
The first one is an estimate on spectral projectors, which is sometimes called uncertainty principle.
It is given in the present section.
\item
From this an improved Wegner estimate for low energies, presented in the following \S\ref{ss:optWE},
can be derived.
\item
Furthermore, the quantitative lower bound on the lifting of the first eigenvalue of box Hamiltonians can be used to derive initial scale estimates for the MSA of Delone-Anderson models. This is pursued in \cite{Rojas-Molina-Thesis} and \cite{GerminetMR}.
\end{itemize}

To obtain the mentioned uncertainty principle for spectral projectors, we will use a result of
\cite{BoutetdeMonvelLS-11} already mentioned in the introduction.
A convenient formulation is:

\begin{thm}[Theorem 1.1 in \cite{BoutetdeMonvelLS-11}]
 For $t_0>0, t\in [0,t_0]$ let $H_t=H_0+tW$ be a self-adjoint, lower semi-bounded operator on a Hilbert space. Set $\lambda(t)=\min \sigma (H_t).$
Assume there exists $\kappa>0$ such that
\begin{equation*}
 \forall t\in [0,t_0): \qquad \lambda(t) \geq \lambda(0) +\kappa~t
\end{equation*}
Then for any $q\in (0,1)$
\begin{equation*}
 \forall t\in [0,t_0): \qquad
\chi_{I(t)} (H_0)W \chi_{I(t)} (H_0)
\geq (1-q) \kappa  \, \chi_{I(t)} (H_0)
\end{equation*}
where $\chi_{I(t)} (H_0)$ is the spectral projector on $I(t)=(-\infty, \lambda(0)+q\kappa\cdot t]$.
\end{thm}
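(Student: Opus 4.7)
The plan is to use a variational comparison between $H_0$ and $H_t$ on the range of the spectral projector $P_t := \chi_{I(t)}(H_0)$, exploiting both the spectral-theoretic upper bound on $\langle \psi, H_0 \psi\rangle$ for $\psi \in \operatorname{Ran} P_t$ and the min-max lower bound on $\langle \psi, H_t \psi\rangle$ coming from $\lambda(t)=\min \sigma(H_t)$.

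First I would fix $t\in(0,t_0)$ and an arbitrary $\psi\in\operatorname{Ran} P_t$. By the spectral theorem for $H_0$ applied to the projector $P_t$ onto the interval $I(t)=(-\infty,\lambda(0)+q\kappa t]$, one has
\[
\langle \psi, H_0 \psi\rangle \leq (\lambda(0)+q\kappa t)\,\|\psi\|^2 .
\]
Next, since $\lambda(t)=\min\sigma(H_t)$ is the bottom of the spectrum of the self-adjoint operator $H_t$, the variational (Rayleigh) inequality gives
\[
\langle \psi, H_t \psi\rangle \geq \lambda(t)\,\|\psi\|^2 \geq (\lambda(0)+\kappa t)\,\|\psi\|^2,
\]
where the second step uses the assumed linear lower bound on $\lambda(t)$.

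Subtracting these two inequalities and using $H_t-H_0=tW$ yields
\[
t\,\langle \psi, W\psi\rangle = \langle \psi, H_t\psi\rangle - \langle \psi, H_0\psi\rangle \geq (1-q)\kappa t\,\|\psi\|^2,
\]
so dividing by $t>0$ gives $\langle \psi, W\psi\rangle \geq (1-q)\kappa\,\|\psi\|^2$ for every $\psi\in\operatorname{Ran} P_t$. Rephrasing this in operator form via $\psi = P_t\phi$ for arbitrary $\phi$ yields
\[
\langle \phi, P_t W P_t \phi\rangle \geq (1-q)\kappa\,\langle \phi, P_t\phi\rangle ,
\]
which is the desired quadratic-form inequality $P_t W P_t \geq (1-q)\kappa\, P_t$. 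The boundary case $t=0$ follows either by taking the monotone limit $t\downarrow 0$ (using strong right-continuity of $s\mapsto \chi_{(-\infty,\lambda(0)+q\kappa s]}(H_0)$ at $s=0$) or, equivalently, by noting the statement is trivial when $P_0=0$ and otherwise reduces to the same two-inequality comparison applied with $\psi$ a ground-state vector of $H_0$.

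The argument is remarkably short and the only conceptual point that required thought was recognizing that the linear lower bound on $\lambda(t)$ must be compared against the \emph{spectrally truncated} energy $\lambda(0)+q\kappa t$, so that the gap $(1-q)\kappa t$ is precisely what one harvests from $t\langle W\rangle$; the minor technicality is the $t=0$ endpoint, which is handled by continuity. No unique continuation or scale-free input is needed here — the work lies entirely in having secured the hypothesis $\lambda(t)\geq \lambda(0)+\kappa t$ elsewhere (e.g.\ from Theorem~\ref{t:scale_free-UCP} applied to ground-state perturbation theory).
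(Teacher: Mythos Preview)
The paper does not supply a proof of this statement; it is quoted verbatim as Theorem~1.1 of \cite{BoutetdeMonvelLS-11} and used as a black box in the proof of Theorem~\ref{t:uncertainty}. Your argument is correct and is precisely the variational comparison one expects: bound $\langle\psi,H_0\psi\rangle$ above by the spectral cutoff $\lambda(0)+q\kappa t$, bound $\langle\psi,H_t\psi\rangle$ below by $\lambda(t)\ge\lambda(0)+\kappa t$, and subtract. The only tacit assumption you are using is that $\operatorname{Ran}\chi_{I(t)}(H_0)\subset D(H_0)$ lies in the form domain of $H_t$, which holds in the intended setting (bounded $W$, so $D(H_t)=D(H_0)$) and is the natural reading of ``$H_t=H_0+tW$ self-adjoint''. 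Your treatment of $t=0$ is also fine; alternatively one can simply observe that $\operatorname{Ran}P_0\subset\operatorname{Ran}P_t$ for every $t>0$, so the case $t=0$ is contained in the cases $t>0$ already established.
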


Now follows the result of this section:

\begin{thm}
\label{t:uncertainty}
Let $V_0\colon \RR^d \to \RR$ be  bounded and measurable, $t,\delta \in (0,1]$, $\{z_k\}_{k\in \ZZ^d} \subset \RR^d$ a sequence such that
\[
 \forall \ k\in \ZZ^d: \quad B(z_k, \delta) \subset \Lambda_1(k)
\]
Let $\chi$ denote the characteristic function of $\bigcup_{k \in \ZZ^d} B(z_k,\delta) $
and $W\geq C_- \cdot \chi$ a bounded potential with $C_- >0$.
\begin{enumerate}[(a)]
 \item
For $L\in 2\NN+1$ and $x\in \RR^d$, denote by $\lambda^{L,x}(t)=\inf \sigma(H_{t,L, x})$ the bottom of the spectrum of
$H_{t,L,x}:= -\Delta + V_0 +t W$ on $\Lambda_L(x)$ with periodic or Dirichlet boundary conditions.
Then
\[
\forall  \ t \in (0,1]: \quad \lambda^{L,x}(t) \geq \lambda^{L,x}(0) + \kappa \cdot t
 \]
where $\kappa:= C_- \cdot C_{sfUC} $ and $C_{sfUC}$ is the constant from the scale-free unique continuation principle.
\item
Fix $q\in (0,1)$ and set $I=(-\infty,\lambda^{L,x}(0) +q \kappa]$.
Then the following uncertainty principle holds
\[
\chi_I(H_{0,L,x}) W \chi_I(H_{0,L,x}) \geq (1-q)\kappa \chi_I(H_{0,L,x})
\]
Here we used for the restriction of $W$ to $\Lambda_L(x)$ the same symbol.
\end{enumerate}

\end{thm}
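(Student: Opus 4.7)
The strategy is to prove part (a) directly from the scale-free unique continuation principle of Theorem~\ref{t:scale_free-UCP} via first-order perturbation theory, and then to invoke the cited Theorem~1.1 of \cite{BoutetdeMonvelLS-11} to upgrade the eigenvalue lifting into the projector uncertainty principle of part (b).

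For part (a), fix $L \in 2\NN+1$ and $x \in \RR^d$, and write $H_t := H_{t,L,x}$. First I would record the standard facts that $H_t$ has compact resolvent, that its ground state eigenvalue $\lambda^{L,x}(t)$ is simple (so the corresponding normalized eigenfunction $\psi_t$ may be chosen real-valued and strictly positive on the interior), and that $t\mapsto \lambda^{L,x}(t)$ is real-analytic on $[0,1]$ since $tW$ is a bounded self-adjoint perturbation; Hellmann--Feynman then yields
\[
\frac{d}{dt}\lambda^{L,x}(t) = \langle \psi_t, W \psi_t\rangle.
\]
Second, I would set $V_t := V_0 + tW - \lambda^{L,x}(t)$ and observe that $\psi_t \in \cD(\Delta_{\L,0}) \cup \cD(\Delta_{\L,\per})$ solves $-\Delta \psi_t + V_t\psi_t = 0$, with
\[
\|V_t\|_\infty \leq \|V_0\|_\infty + \|W\|_\infty + |\lambda^{L,x}(t)| \leq 2(\|V_0\|_\infty + \|W\|_\infty) =: K_V,
\]
a bound which is uniform in $t\in[0,1]$ and in $L$. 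Hence Theorem~\ref{t:scale_free-UCP} applies to $\psi_t$ with this $K_V$ and the given $\delta$, producing a constant $C_{sfUC} = C_{sfUC}(d, K_V, \delta) > 0$ which is also uniform in $t$ and $L$. Using $W \geq C_-\chi$ and $\|\psi_t\|_{\L_L}=1$ gives
\[
\frac{d}{dt}\lambda^{L,x}(t) = \langle\psi_t, W\psi_t\rangle \geq C_- \sum_{k\in \tL_L}\|\psi_t\|_{B(z_k,\delta)}^2 \geq C_- \cdot C_{sfUC} = \kappa.
\]
Integrating this lower bound on the derivative from $0$ to $t$ yields $\lambda^{L,x}(t) \geq \lambda^{L,x}(0) + \kappa t$ for every $t\in(0,1]$, which is part (a).

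For part (b), I would directly invoke Theorem~1.1 of \cite{BoutetdeMonvelLS-11}, quoted just above the statement, applied to the family $H_t = H_{0,L,x} + tW$ with $t_0 = 1$, $\lambda(t) = \lambda^{L,x}(t)$ and the same $\kappa$ produced in part (a). Part (a) provides exactly the hypothesis $\lambda(t) \geq \lambda(0) + \kappa t$ needed. The cited theorem then gives, for any $q\in(0,1)$,
\[
\chi_{I(t)}(H_{0,L,x})\, W\, \chi_{I(t)}(H_{0,L,x}) \geq (1-q)\kappa\, \chi_{I(t)}(H_{0,L,x}),\qquad I(t) = (-\infty, \lambda^{L,x}(0) + q\kappa t].
\]
Specializing to $t=1$ recovers the stated uncertainty principle.

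The main (and really only) obstacle is making sure the constant $\kappa$ is genuinely independent of $t$ and $L$. This reduces to the uniform upper bound $\|V_t\|_\infty \leq K_V$ above, together with the $L$-independence built into Theorem~\ref{t:scale_free-UCP}; once those are in place, the Hellmann--Feynman computation and the invocation of the \cite{BoutetdeMonvelLS-11} abstract result are routine. A minor point to be careful about is the differentiability in $t$ of $\lambda^{L,x}(t)$, which is handled by the simplicity of the ground state and Kato's analytic perturbation theory; absent this one could instead argue via the variational principle and concavity of $\lambda^{L,x}(\cdot)$ (as an infimum of affine functions of $t$) combined with the same pointwise lower bound on the one-sided derivatives.
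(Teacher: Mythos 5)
Your proof is correct in substance and shares the essential mechanism with the paper's proof: both apply the scale-free unique continuation principle to the ground state $\psi(t)$ of the \emph{perturbed} operator $H_{t,L,x}$, both emphasize that the parameter $K_V$ entering $C_{sfUC}$ is bounded uniformly in $t$ and $L$, and both invoke Theorem~1.1 of \cite{BoutetdeMonvelLS-11} verbatim for part~(b). The difference lies in part~(a). You differentiate $\lambda^{L,x}(t)$ via Hellmann--Feynman and integrate, which requires $t\mapsto\lambda^{L,x}(t)$ and the ground state to be sufficiently regular (you correctly note this forces you through simplicity of the ground state and Kato's analytic perturbation theory). The paper dispenses with any differentiability argument: it writes
\[
\lambda^{L,x}(t)=\langle \psi(t), H_{0,L,x}\psi(t)\rangle + t\,\langle\psi(t), W\psi(t)\rangle,
\]
lower bounds the second summand by $C_-\,C_{sfUC}\,t$ using the scale-free UCP, and lower bounds the first summand by $\lambda^{L,x}(0)$ via the min--max principle. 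That version is elementary, avoids the regularity discussion entirely, and is what you sketch as an ``alternative via the variational principle'' in your closing paragraph; in the paper it \emph{is} the argument. What Hellmann--Feynman buys you is a derivative-level statement, but for the conclusion you need only the finite-increment inequality, which the quadratic-form splitting gives directly.

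One small quantitative slip: your bound $|\lambda^{L,x}(t)|\leq \|V_0\|_\infty + \|W\|_\infty$ is off for Dirichlet boundary conditions, since the free Dirichlet ground-state energy $d\pi^2/L^2$ is strictly positive; the correct statement is that this energy is bounded uniformly for $L\in 2\NN+1$ (so $L\geq 1$), hence a slightly larger but still $L$- and $t$-independent constant $K_V$ suffices. The paper sidesteps this by simply asserting that the family of potentials is uniformly bounded and that $\{\lambda^{L,x}(t)\}$ lies in a compact interval, without writing an explicit numerical bound; your argument needs the same compactness, just with a corrected constant.
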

\begin{proof}
The normalized ground state $\psi(t)$ of $H_{t,L,x}$ satisfies
\begin{equation}
\label{eq:e-v-lifting}
\lambda^{L,x}(t)=\inf \sigma(H_{t,L,x}) =\langle \psi(t), H_{t,L,x}\psi(t) \rangle
\end{equation}
We want to apply the scale free unique continuation principle from Section \ref{s:scale-free_ucp} to \eqref{eq:e-v-lifting}.
We have to do this with some care since we will be dealing not with one fixed potential,
but a whole family of them. Likewise we will not be dealing with one single energy which features as an eigenvalue.  Now since the constant
$C_{sfUC}$ depends on the potential  (actually, on the \emph{effective} potential which is the difference of the potential and the eigenvalue) it is important to make sure that it is uniformly bounded away form zero,
for the family of effective potentials in question. This is actually the case since
$C_{sfUC}$ depends on the potential $V$ only through the sup-norm $K_V$.

Note that the family of potentials  $V_0+tW$, $t \in [0,1]$  is uniformly bounded.
Thus the set $\{ \lambda^{L,x}(t):\,t\in [0,1],L\in 2\NN+1\}$ is contained in a compact interval $\cI \subset\mathbb R$.
We will apply the scale-free unique continuation principle to this family of potentials.
Note that the parameter $K_V$  which enters the constant $C_{sfUC}$ is bounded uniformly by the finite number
$\sup \{\norm{V_0+tW-E}_\infty;\, E\in \cI, \,t\in[0,1] \}$.
 It follows
\begin{align}
\label{ev_eq}
\lambda^{L,x}(t)&=\langle \psi(t), H_{t,L,x}\psi(t) \rangle  = \langle \psi(t), H_{0,L,x}\psi(t) \rangle + t\langle \psi(t),W \psi(t) \rangle\\
& \geq \langle \psi(t), H_{0,L,x}\psi(t) \rangle + C_-\cdot C_{sfUC} \, t
\\
&\geq  \lambda^{L,x}(0) +t \kappa
\end{align}
where in the last line we used the min-max principle on  the operator $H_{0,L,x}$.
In particular, for $t=1$,
\begin{equation}
\label{lift} \lambda^{L,x}(1)\geq \lambda^{L,x}(0) + \kappa.
\end{equation}
Now, consider the interval $J=[\lambda^{L,x}(0),\lambda^{L,x}(0) +q \kappa]$, for some $q\in (0,1)$
and note that $\chi_I(H_{0,L,x})= \chi_{J}(H_{0,L,x})$.	  We can then
apply  \cite[Theorem 1.1]{BoutetdeMonvelLS-11} and conclude that
\begin{align}
\chi_I(H_{0,L,x}) \, W \,  \chi_I(H_{0,L,x}) & \geq (\lambda^{L,x}(1)-\lambda^{L,x}(0) -q \kappa) \, \chi_I(H_{0,L,x}) \\
& \geq (1-q)\kappa \,  \chi_I(H_{0,L,x})
\end{align}
 \end{proof}
In the case of Dirichlet boundary conditions we know that for any $x\in\mathbb R^d$ and $L\in \mathbb N$
\begin{equation}
\label{eq:bottom-comparison}
  \lambda^{L,x}(0)\geq E_{\min} := \inf \sigma(-\Delta+V_0)
\end{equation}

\begin{cor}
Let the hypotheses and notation of Theorem \ref{t:uncertainty}  hold and assume that Dirichlet boundary conditions are imposed.
Then
\[
\forall  \ t \in (0,1]: \quad \lambda^{L,x}(t) \geq E_{\min} + \kappa \cdot t
 \]
Fix $q\in (0,1)$ and set $I=(-\infty,E_{\min} +q \kappa]$.
Then the following uncertainty principle holds
\[
\chi_I(H_{0,L,x}) W \chi_I(H_{0,L,x}) \geq (1-q)\kappa \chi_I(H_{0,L,x})
\]
\end{cor}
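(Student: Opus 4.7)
The corollary is really a direct consequence of Theorem~\ref{t:uncertainty} combined with the comparison \eqref{eq:bottom-comparison}, so the plan is very short. The only substantive observation is that with Dirichlet boundary conditions the ground state energy on a box lies above the ground state energy on $\RR^d$, uniformly in $L$ and the center $x$; all the analytic work has already been done inside Theorem~\ref{t:uncertainty}.

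For the eigenvalue lifting statement, I would simply chain the two inequalities. Theorem~\ref{t:uncertainty}(a) already gives
\[
\lambda^{L,x}(t)\geq \lambda^{L,x}(0)+\kappa t \qquad \text{for all } t\in(0,1],
\]
and the hypothesis that Dirichlet boundary conditions are imposed lets one invoke \eqref{eq:bottom-comparison} to replace $\lambda^{L,x}(0)$ by the $L$- and $x$-independent constant $E_{\min}$. This yields the first assertion immediately. The justification of \eqref{eq:bottom-comparison} itself, if needed, is the standard quadratic form argument: extending a Dirichlet ground state on $\Lambda_L(x)$ by zero produces a valid trial function in the form domain of $-\Delta+V_0$ on $\RR^d$ with unchanged $L^2$-norm and unchanged Dirichlet form, so the min-max principle forces $E_{\min}\leq \lambda^{L,x}(0)$.

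For the uncertainty principle, let $I=(-\infty,E_{\min}+q\kappa]$ and $\tilde I=(-\infty,\lambda^{L,x}(0)+q\kappa]$. Since $E_{\min}\leq \lambda^{L,x}(0)$, we have $I\subset \tilde I$, hence $P:=\chi_I(H_{0,L,x})$ and $\tilde P:=\chi_{\tilde I}(H_{0,L,x})$ are commuting orthogonal projections with $P=P\tilde P=\tilde P P$. Theorem~\ref{t:uncertainty}(b) applied on the (possibly longer) interval $\tilde I$ gives
\[
\tilde P\, W\, \tilde P \geq (1-q)\kappa\, \tilde P.
\]
Multiplying this operator inequality on both sides by $P$ and using $P\tilde P=\tilde P P=P$ and $P^2=P$ yields
\[
P\, W\, P = P\tilde P\, W\, \tilde P P \geq (1-q)\kappa\, P\tilde P P = (1-q)\kappa\, P,
\]
which is the desired uncertainty principle.

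There is no real obstacle here; the only point requiring a line of care is the transfer of the quadratic-form inequality from $\tilde P$ to the smaller projector $P$, which is the two-line sandwich just shown. Everything else is a direct substitution of the Dirichlet bound $\lambda^{L,x}(0)\geq E_{\min}$ into the statements of Theorem~\ref{t:uncertainty}.
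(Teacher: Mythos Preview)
Your proof is correct and follows exactly the approach the paper intends: the corollary is stated without a separate proof, with the text simply noting that it follows from Theorem~\ref{t:uncertainty} together with the Dirichlet comparison \eqref{eq:bottom-comparison}. Your sandwich argument with $P\tilde P=P$ is precisely the standard way to pass from the larger spectral projector to the smaller one, and is the only step requiring any justification beyond direct substitution.
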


\subsection[Optimal Wegner estimate at low energies]{Optimal Wegner estimate at low energies for Delone-Anderson models}
\label{ss:optWE}

For sufficiently small energies we are able to give an optimal Wegner estimate.
This means that its dependence on the volume on the system size and the energy interval length is linear.

\begin{thm}\label{t:optWE}
Let $H_\omega$ be a random Schr\"odinger operator as in Section \ref{s:DAmodel}.
There exists $\kappa\in (0,\infty)$ such that for any $q\in (0,1)$ there exists $C_W\in (0,\infty)$, such that,
for any  $L\in 2\NN+1$, $x \in \RR^d$, $E\in \RR$ and $\epsilon>0$
with $[E-\epsilon,E+\epsilon]\subset J:=[\lambda^{L,x}(0),\lambda^{L,x}(0) +q \kappa]$,
the following Wegner estimate holds
\begin{equation}
\label{e-WE2}
\EE\{\Tr [ \chi_{[E-\epsilon,E+\epsilon]}(H_{\omega, L,x}) ]\}
\le C_W \ s(\epsilon) \,\ |\Lambda_L|
\end{equation}
where $\lambda^{L,x}(0)=\inf \sigma(H_{0,L,x})$.
The constants $C_W$ and $\kappa$ depend only on
$C_-, C_+, \delta_-,\delta_+,M, \|V_0\|_\infty$
from Assumption \ref{hyp:Delone_Anderson} and on $q$.
\end{thm}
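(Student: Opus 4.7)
The plan is to follow the scheme of \cite{CombesHK-07}, substituting the scale-free uncertainty principle of Section \ref{s:scale-free_ucp} for their Floquet-theoretic input. Unlike in the proof of Theorem \ref{t:Wegner}, I will not pass through the spectral shift function; instead I dominate the spectral projector directly by the random potential via a quadratic-form inequality, which removes the $|\ln\epsilon|^d$ overhead.

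First I would lift Theorem \ref{t:uncertainty}(b) so that it applies to the random operator itself. The ground-state lifting bound $\lambda_1(H+tW) \geq \lambda_1(H) + \kappa t$ established in the proof of Theorem \ref{t:uncertainty}(a) depends on the base operator only through the sup-norm of its potential; since $\sup_\omega \|V_0+V_\omega\|_\infty < \infty$, the argument applies verbatim with $H := H_{\omega,L,x}$ and $W := \sum_{j \in \Gamma_1 \cap \Lambda_L} u_j \geq C_- \chi$, with a constant $\kappa = C_- \cdot C_{sfUC}$ uniform in $\omega$. Theorem 1.1 of \cite{BoutetdeMonvelLS-11} then yields the $\omega$-dependent uncertainty relation
\[
\chi_{I(\omega)}(H_{\omega,L,x}) \, W \, \chi_{I(\omega)}(H_{\omega,L,x}) \geq (1-q)\kappa \, \chi_{I(\omega)}(H_{\omega,L,x}), \quad I(\omega) := (-\infty, \lambda_1(H_{\omega,L,x}) + q\kappa].
\]
A positivity reduction---shifting $\omega_j \mapsto \omega_j + m$ for $j \in \Gamma_1$ and absorbing the resulting deterministic term $-m\sum_{j\in\Gamma_1} u_j$, together with the uniformly bounded $\Gamma_2$-contribution, into $V_0$, exactly as in the proof of Theorem \ref{t:Wegner}---ensures $H_{\omega,L,x} \geq H_{0,L,x}$ and hence $\lambda_1(H_{\omega,L,x}) \geq \lambda^{L,x}(0)$. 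Combined with the hypothesis $[E-\epsilon, E+\epsilon] \subset J$, this gives $[E-\epsilon, E+\epsilon] \subset I(\omega)$, so sandwiching the uncertainty relation with $\chi_{[E-\epsilon, E+\epsilon]}(H_{\omega,L,x})$ and taking trace yields
\[
(1-q)\kappa \, \Tr \chi_{[E-\epsilon, E+\epsilon]}(H_{\omega,L,x}) \leq \sum_{j \in \Gamma_1 \cap \Lambda_L} \Tr\bigl(u_j \, \chi_{[E-\epsilon, E+\epsilon]}(H_{\omega,L,x})\bigr).
\]

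The final step is the Combes--Hislop single-site spectral averaging, applied as in \cite{CombesHK-07}. For each $j \in \Gamma_1 \cap \Lambda_L$, writing $H_{\omega,L,x} = A_j + \omega_j u_j$ with $A_j$ independent of $\omega_j$, the map $\omega_j \mapsto H_{\omega,L,x}$ is a monotone one-parameter perturbation; the partial-integration argument underlying Lemma \ref{l-sme}, conditional on $\{\omega_k\}_{k\neq j}$, produces
\[
\EE_{\omega_j}\bigl[\Tr\bigl(u_j \chi_{[E-\epsilon, E+\epsilon]}(H_{\omega,L,x})\bigr)\bigr] \leq C' s(\epsilon)
\]
uniformly in the other random variables, with $C'$ depending only on $C_+, \delta_+, m, \|V_0\|_\infty$. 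Summing over the at most $\tilde N \, |\Lambda_L|$ sites $j \in \Gamma_1 \cap \Lambda_L$ and taking full expectation yields \eqref{e-WE2} with $C_W = \tilde N \, C' / ((1-q)\kappa)$.

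The main delicate point is the positivity reduction and its compatibility with the statement, which defines $J$ in terms of the spectrum of the \emph{original} background $H_{0,L,x}$: since the shift adds the same constant to $V_0$, to $V_\omega$, to $\lambda^{L,x}(0)$, and to $E$, the inclusion $[E-\epsilon,E+\epsilon]\subset J$ is preserved and the constants $\kappa, C_W$ transform consistently. The spectral-averaging step itself is by now standard, but the precise constants in the Combes--Hislop lemma for non-periodic single-site potentials require one to follow the computations of \cite{CombesHK-07} carefully, since the underlying argument is one-dimensional in $\omega_j$ and needs no translation structure.
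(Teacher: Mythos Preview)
Your route differs from the paper's in a key structural choice: you apply the uncertainty relation of Theorem \ref{t:uncertainty} pathwise to the random operator $H_{\omega,L,x}$, whereas the paper applies it to the deterministic background $H_{0,L,x}$ and then invokes the full Combes--Hislop--Klopp decomposition
\[
\Tr\chi_\Delta(H_\omega)=\Tr\bigl(\chi_\Delta(H_\omega)\chi_{\tilde\Delta}(H_0)\bigr)+\Tr\bigl(\chi_\Delta(H_\omega)\chi_{\tilde\Delta^c}(H_0)\bigr)
\]
together with Combes--Thomas bounds on the second piece. Your scheme is more direct and would bypass the decomposition entirely, but it has a genuine gap at the step you yourself flag as delicate.

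The inclusion $[E-\epsilon,E+\epsilon]\subset I(\omega)=(-\infty,\lambda_1(H_{\omega,L,x})+q\kappa]$ requires $\lambda^{L,x}(0)\le\lambda_1(H_{\omega,L,x})$, and your positivity reduction does not deliver this. Shifting $\omega_j\mapsto\omega_j+m$ for $j\in\Gamma_1$ adds the \emph{function} $m\sum_{j\in\Gamma_1}u_j$ to $V_\omega$, not a scalar; absorbing the compensating $-m\sum u_j$ and the random $\Gamma_2$-contribution into $V_0$ changes $\lambda^{L,x}(0)$ in a nontrivial, $\omega$-dependent way. So the sentence ``the shift adds the same constant to $V_0$, to $V_\omega$, to $\lambda^{L,x}(0)$, and to $E$'' is incorrect, and the hypothesis $\Delta\subset J$ for the \emph{original} $J$ cannot be transported to the redefined model. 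Concretely, take $V_0=0$, $\Gamma_2=\emptyset$, $\omega_j\in[-m,m]$: for $\omega_j\equiv -m$ one has $\lambda_1(H_\omega)<0=\lambda^{L,x}(0)$, and since $\kappa=C_-\,C_{sfUC}$ is exponentially small (cf.\ \eqref{eq:CsfUC-dependence}) while the negative part of $V_\omega$ is of order $mC_+$, no adjustment of $q$ recovers the lost window.

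This is exactly why the paper places the uncertainty relation on $H_0$: then the interval $I$ matches the hypothesis by construction, no sign condition on $V_\omega$ is needed, and the discrepancy between $H_\omega$ and $H_0$ is absorbed by the Combes--Thomas part of the decomposition. The pathwise shortcut you outline does become available once one has the uncertainty principle for spectral projections on \emph{arbitrary} fixed intervals $(-\infty,E_0]$, independent of the ground-state location---that is precisely the content of Klein's preprint cited in the ``Note added in proof'', but it lies outside the tools of the present paper.
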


A Wegner estimate is by its very nature a statement about a finite box Hamiltonian.
However in typical applications it is important to know, how this box Hamiltonian
(actually, a sequence of box Hamiltonians on diverging scales), relates to the infinite volume system.
In particular, the minimum of the spectrum of  a (random) Schr\"odinger operator on the whole of $\RR^d$
is the reference energy when it comes to identifying appropriate intervals in which to expect Anderson localization. Using \eqref{eq:bottom-comparison} we obtain from the previous Theorem,

\begin{cor}
Let $H_{\omega,L,x}$ be a random Schr\"odinger operator as in Section \ref{s:DAmodel}, restricted to the box $\L_{L}(x)$ with Dirichlet boundary conditions.
There exists $\kappa\in (0,\infty)$ such that for any $q\in (0,1)$ there exists $C_W\in (0,\infty)$, such that,
for any  $L\in 2\NN+1$, $x \in \RR^d$, $E\in \RR$ and $\epsilon>0$
with $[E-\epsilon,E+\epsilon]\subset J:=[E_{\min},E_{\min} +q \kappa]$,
the following Wegner estimate holds
\begin{equation}
\label{e-WE2_Dirichlet}
\EE\{\Tr [ \chi_{[E-\epsilon,E+\epsilon]}(H_{\omega, L,x}) ]\}
\le C_W \ s(\epsilon) \,\ |\Lambda_L|
\end{equation}
where $E_{\min}=\inf \sigma(H_{0})$.
The constants $C_W$ and $\kappa$ depend only on
$C_-$, $ C_+$, $ \delta_-,\delta_+,M$, $ \|V_0\|_\infty$
from Assumption \ref{hyp:Delone_Anderson} and on $q$.
\end{cor}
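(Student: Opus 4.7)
The plan is to mirror the proof of Theorem \ref{t:optWE} and replace its input by the stronger uncertainty principle available under Dirichlet boundary conditions.

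First, I would recall the structure of the proof of Theorem \ref{t:optWE}. Following the Combes--Hislop--Klopp scheme of \cite{CombesHK-07}, one starts from the uncertainty principle of Theorem \ref{t:uncertainty}(b),
\[
\chi_I(H_{0,L,x})\, W\, \chi_I(H_{0,L,x}) \ge (1-q)\kappa\, \chi_I(H_{0,L,x})
\qquad\text{on}\qquad I=(-\infty,\lambda^{L,x}(0)+q\kappa],
\]
and combines it with a Krein spectral shift bound and a monotonicity argument analogous to the one in the proof of Theorem \ref{t:Wegner} to obtain the linear-volume Wegner estimate \eqref{e-WE2} on every window $[E-\epsilon,E+\epsilon]\subset I$. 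The resulting constant $C_W$ is uniform in $L$ and $x$; the only $(L,x)$-dependent piece of data is the right endpoint $\lambda^{L,x}(0)+q\kappa$ of the reference interval $I$.

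Second, under Dirichlet boundary conditions, the corollary immediately following Theorem \ref{t:uncertainty}---which rests on \eqref{eq:bottom-comparison}---supplies the same uncertainty principle, with the same coercivity constant $(1-q)\kappa=(1-q)C_-C_{sfUC}$, on the strictly larger, and now $(L,x)$-independent, interval $I'=(-\infty,E_{\min}+q\kappa]$. Since the Combes--Hislop--Klopp machinery invokes the uncertainty principle only through its coercivity constant and through the inclusion of the Wegner window in the reference interval, substituting $I'$ for $I$ is transparent: every $[E-\epsilon,E+\epsilon]\subset[E_{\min},E_{\min}+q\kappa]$ automatically sits inside $I'$.

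Running the proof of Theorem \ref{t:optWE} verbatim with this substitution then yields \eqref{e-WE2_Dirichlet}, with $\kappa$ the constant from the scale-free unique continuation principle and $C_W$ inheriting from Theorem \ref{t:optWE} the same dependence on $C_\pm,\delta_\pm,M,\|V_0\|_\infty$ and $q$. The one point that requires honest checking---and the only place where I would anticipate friction---is confirming that the argument of \cite{CombesHK-07} genuinely factors through the uncertainty principle as described, and does not exploit the infimum $\lambda^{L,x}(0)$ in any other way. This is a routine bookkeeping exercise rather than a substantive obstacle.
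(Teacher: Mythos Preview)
Your proposal is correct and follows essentially the same route as the paper, which simply states that the Corollary follows from Theorem~\ref{t:optWE} together with \eqref{eq:bottom-comparison}. One small slip: you call $I'=(-\infty,E_{\min}+q\kappa]$ ``strictly larger'' than $I=(-\infty,\lambda^{L,x}(0)+q\kappa]$, but since $E_{\min}\le\lambda^{L,x}(0)$ it is in fact smaller (or equal); this does not affect your argument, because you invoke the Dirichlet corollary of Theorem~\ref{t:uncertainty} directly to obtain the uncertainty principle on $I'$, and any Wegner window in $[E_{\min},E_{\min}+q\kappa]$ sits inside $I'$ as you say.
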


\begin{proof}[Proof of Theorem \ref{t:optWE}]
We follow the strategy of the proof in \cite{CombesHK-07}.  To estimate $\EE\{\Tr [ \chi_{\Delta}(H_{\omega, L, x}) ]\}$, with $\Delta:=[E-\epsilon,E+\epsilon]$,
we decompose it with respect to the free spectral projector $\chi_{\tilde \Delta}(H_{0, L, x})$  of an interval $\tilde \Delta$,
such that $\Delta\subset \tilde \Delta$ and $d_{\Delta}=\dist(\Delta, \tilde\Delta^c)>0$, i.e.
\begin{equation}
\label{decomp} \Tr [ \chi_{\Delta}(H_{\omega, L, x})
]=\Tr [ \chi_{\Delta}(H_{\omega, L, x}) \chi_{\tilde\Delta}(H_{0, L, x}) ]+ \Tr [ \chi_{\Delta}(H_{\omega, L, x}) \chi_{\tilde\Delta^c}(H_{\omega, L, x}) ].
\end{equation}
The second term of the RHS in \eqref{decomp} can be estimated using Combes-Thomas type estimates as in \cite{CombesHK-07}.
To obtain a bound for the first term, uniform with respect to $x\in\RR^d$,
 the key is a positivity estimate analogous  to  \cite[Theorem 2.1]{CombesHK-07}.
In order to derive it, denote by $W$ the Delone-Anderson potential $V_\omega$ with all the random variables $\omega_j, j \in \Gamma_1, $ set equal to one,
and all random variables $\omega_j, j \in \Gamma_2, $ set equal to zero, i.e.,
\[
 W:= \sum_{j \in\Gamma_1}u_j
\]
Denote by $\lambda^{L,x}(0) := \inf \sigma(H_{0,L,x})$
the lowest eigenvalue of the restriction of $H_{0}$ to the box $\Lambda_L(x)$  with periodic or Dirichlet boundary conditions.
Now fix $q\in (0,1)$ and set $I=(-\infty,\lambda^{L,x}(0) +q \kappa]$.
Since $W$ is bounded, Lemma \ref{t:uncertainty} gives
\[
\chi_I(H_{0,L,x}) \, W \,  \chi_I(H_{0,L,x})  \geq (1-q)\kappa \, \chi_I(H_{0,L,x})
\]
Therefore, we have obtained the analog of \cite[Theorem 2.1]{CombesHK-07} for any interval $\Delta\subset I$.
The rest of the proof follows along the lines of \cite[Section 2]{CombesHK-07}.
\end{proof}
\smallskip

\section{Proof of Theorem \ref{t:scale_free-UCP}}
\label{s:proof-scale_free-UCP}

\subsection{Extension of the (sub)solution} \label{s:extension-solution}
We want to apply the quantitative unique continuation principle which requires among its geometric conditions
a certain security distance to the boundary of the set where the Schr\"odinger inequality is satisfied.
This is not true for the solution $\psi$ defined on the original cube $\LL$, therefore we will extend it to a larger set
in such a way that the extension $\tilde \psi$  still satisfies a Schr\"odinger inequality.
We have to discuss two different cases, namely periodic and Dirichlet boundary conditions on $\partial \Lambda_L$.
In the first case the extension is suggested by the very definition of periodic boundary conditions;
in the second we make use of the same construction as in Corollary A.2
in \cite{GerminetK}. Related constructions are used in Sobolev extension theorems,
cf.~for instance \cite{Agmon-10} or \cite{GilbargT-83},
and in Schwarz' method of image charges in electrostatics.

\subsection*{Extension in the case of periodic b.c.} We extend the potential $V$ as well as the function $\psi$,
defined on the box $\LL$, periodically to  $\tilde V, \tilde \psi \colon \RR^d\to\RR$.
Note that $\tilde V$ still takes values in $[-K_V,K_V]$ only. By the very definition of the domain $\cD(\Delta_{\L_L,\per})$
the extension $\tilde \psi$ is locally in the Sobolev space $W^{2,2}(\RR^d)$.
Moreover, the relation
\[
|\Delta \tilde \psi| \leq |\tilde V\tilde \psi|
\]
holds almost everywhere on $\RR^d$. Actually, we may consider the extension $\tilde \psi$ as a function on the
torus $\RR^d / (L\ZZ)^d$. We will make free use of this identification and denote the torus again by $\Lambda_L$, and the extension $\tilde \psi$ again by $\psi$ whenever there is no danger of confusion.

\subsection*{Extension in the case of Dirichlet b.c.}
\label{r:Dirichelt-extension}
The idea is to extend the potential $V$ by symmetric reflections
w.r.t. to hypersurfaces forming the boundaries of the cube $\Lambda_L$ and afterwards extend the function $\psi$
by antisymmetric reflections w.r.t. to hypersurfaces forming the boundaries of the cube $\Lambda_L$. Let us describe this more precisely.

For convenience we shift the coordinate system such that
\begin{align}
\L_L &= \{x \in \RR^d \mid -\frac{L}{2}\leq x_i \leq\frac{L}{2}  \text{ for all } i = 1, \ldots, d \}
\\
         &= \{y \in \RR^d \mid 0 \leq y_i \leq L \text{ for all } i = 1, \ldots, d \}
\intertext{and define a larger set of twice the size by}
R_L      &:= \{y \in \RR^d \mid -L \leq y_1 \leq L , \  0 \leq y_i \leq L \text{ for all } i = 2, \ldots, d  \}
\end{align}
If we define $\check V:R_L \to \RR$ by
\[
\check V(y_1, y_\perp) :=
\begin{cases}
 V(y_1, y_\perp), &\quad \text{ for }  y \in \L_L,
\\
0, &\quad \text{ for }  y \in R_L, y_1 =0,
\\
 V(-y_1, y_\perp), &\quad \text{ for }  y \in R_L, y_1<0
\end{cases}
\]
then $\check V$ still takes values in $[-K_V,K_V]$ only.
On the other hand we extend the function $\psi:\L_L \to \RR$ to the set $R_L$
by
\[
\hat \psi(y_1, y_\perp) :=
\begin{cases}
 \psi(y_1, y_\perp), &\quad \text{ for }  y \in \L_L,
\\
0, &\quad \text{ for }  y \in R_L, y_1 =0,
\\
 -\psi(-y_1, y_\perp), &\quad \text{ for }  y \in R_L, y_1<0
\end{cases}
\]
where $y_\perp=(y_2, \ldots,y_d)$.
It is well known that the Laplacian of this extension is still in $L^2(R_L)$,
and consequently $\hat\psi$ is in the domain of the Dirichlet Laplacian  on $R_L$,
cf. e.g. \cite{Agmon-10} or \cite{GilbargT-83}.
The relation
\begin{equation}
\label{eq:relationRL}
|\Delta \hat \psi| \leq |\check V\hat \psi|
 \end{equation}
still holds almost everywhere on $R_L$.

Now we successively extend both $\check V$ and $\hat \psi$
in the remaining $d-1$ directions and obtain functions defined on
\begin{align*}
         \{y \in \RR^d \mid -L \leq y_i \leq L \text{ for all } i = 1, \ldots, d,  \}
\end{align*}
i.e. on a cube twice the side of the original one. Finally we extend these two functions periodically w.r.t. the latice $(2L\ZZ)^d$
to functions defined on $\RR^d$ which satisfy \eqref{eq:relationRL} a.e. on $\RR^d$. Moreover $\hat \psi$ is in $W^{2,2}$ locally.

\begin{rem}
Note that if we restrict  the extension $\tilde\psi$ (or $\hat\psi$ respectively)
to a cube of side $2kL$ with $k \in \NN$ we obtain an $L^2$-eigenfunction of a box Schr\"odinger operator
with periodic  b.c. (or Dirichlet b.c).
\end{rem}

\subsection{Dominating and weak boxes}\label{s:dom-boxes}
We concentrate first on the case of periodic b.c. on $\partial \L_L$
and explain
below which modifications are necessary for Dirichlet boundary conditions.

For $L \in 2 \NN +1$, up to boundaries (sets of measure zero) $\L_L$ can be decomposed into closed unit boxes
\[
 \L_L= \bigcup_{k \in \tL_L} \L_1(k)
\]
where $\tL_L=\ZZ^d \cap [-L/2,L/2)^d$.

In some of our arguments we have to use the Euclidean norm in $\RR^d$
in others cubes, i.e.~ball in the sup-norm are natural. This is the reason why the factor $\sqrt{d}$ will appear often.

Fix $T= 62\ceil{\sqrt{d}}$, where $\ceil{x}$ stands for the least integer greater or
equal than $x$. The choice of $T$ will be apparent in formula \eqref{eq:choseT} when we want to make sure that the
geometric assumption of Corollary \ref{c:our_corollary} are satisfied.
We say that the site $k\in \tL_L$  is \emph{dominating} in the case of periodic b.c., if
\begin{equation}\label{eq:dombox}
 \int_{\L_1(k)} |\psi|^2 \geq  \frac{1}{2T^d} \int_{\L_T(k)} |\psi|^2
\end{equation}
and otherwise, \emph{weak}.

\begin{rem}
For eigenfunctions of one dimensional Schr\"odinger equations all sites $k\in \tL_L$  turn out to be dominating.
This is one of the reasons why the unique continuation estimates in one dimension obtained in the Diploma thesis \cite{Veselic-96}
and the papers
\cite{KirschV-02b,HelmV-07} are much simpler. The other reason is that Gronwall's lemma
gives a more effective quantitative unique continuation estimate than Theorem \ref{t:quantitative-UCP}.
\end{rem}

Let $W\subset\L_L$ be the union of unit boxes centered in weak  sites, and $D\subset\L_L$ the union of unit boxes centered in dominating sites. Note that $ \displaystyle \bigcup_{k \in\tL_L} \Lambda_T(k) $
considered modulo $(L\ZZ)^d$ gives an $T^d$-fold cover of the box (or torus) $\L_L$.
Since the extension $\tilde \psi$ is $(L\ZZ)^d$-periodic
this gives
\[
 \sum_{k\in \tL_L} \int_{\L_T(k) \text{ mod } (L\ZZ)^d} |\psi|^2
=\sum_{k\in \tL_L} \int_{\L_T(k)} |\tilde \psi|^2
=T^d \int_{\L_L}  |\psi|^2
\]
Thus
\begin{align} \label{eq:weak}
\int_W |\psi|^2 & =\sum_{\text{weak  sites}}  \int_{\L_1(k)} |\psi|^2
<  \frac{1}{2 T^d} \sum_{\text{weak  sites}}  \int_{\L_T(k)} |\tilde \psi|^2
\\ \nn
& \leq  \frac{1}{2T^d} \sum_{k\in \tL_L} \int_{\L_T(k)} |\tilde \psi|^2
=\frac{T^d}{2T^d} \int_{\L_L}  |\psi|^2 = \frac{1}{2} \int_{\L_L} |\psi|^2
\end{align}
We see that the weak  boxes contribute at most half of the total mass to the $L^2$ norm.
Since $D$ is the complement of $W$ in $\Lambda_L$
\begin{equation}
\label{eq:only_dominating}
2\int_D | \psi|^2  >   \int_{\L_L} |\psi|^2
\end{equation}
This means that it is sufficient to establish adequate unique continuation estimates for
dominating boxes.
\smallskip

\subsection*{Modification for Dirichlet boundary conditions}
\label{r:DirichletBC}
If $\psi$ enjoys Dirichlet b.c. on $\partial \LL$
then the associated extension $\hat \psi$ no longer satisfies
the equality
\[
 \sum_{k\in \tL_L} \int_{\L_T(k)} |\hat \psi|^2 =T^d \int_{\L_L}  |\psi|^2
\]
used in \eqref{eq:weak}. However, by the antisymmetry of $\hat \psi$ we have at least
\[
\int_{\L_T(k)} |\hat \psi|^2 \leq 2^d  \int_{\L_T(k)\cap \L_L}  | \psi|^2
\]
and thus
\[
 \sum_{k\in \tL_L} \int_{\L_T(k)} |\hat \psi|^2 \leq (2T)^d \int_{\L_L}  |\psi|^2 .
\]
For this reason we use in the case of Dirichlet b.c. on $\partial \LL$ a modified notion of dominating sites.
Here we call a site $k\in \tL$  \emph{(Dirichlet) dominating}, if
\begin{equation*}
 \int_{\L_1(k)} |\psi|^2 \geq  \frac{1}{2(2T)^d} \int_{\L_T(k)} |\hat \psi|^2
\end{equation*}
Then again  relation \eqref{eq:only_dominating} holds true.
\medskip

In the remainder of the proof we treat the case of periodic b.c. always first,
because it is easier.
Afterwards we discuss the changes necessary for Dirichlet b.c.

\subsection{A unique continuation principle for dominating boxes and near-neighbor sites}
\label{ss:rightNN}
For a dominating site $k\in\tL_L$ define its \emph{right near-neighbor} by
\begin{align}
 k^+ & =k+ (\ceil{\sqrt{d}}+1)\,{\mathbf e_1} \, \mod (L\ZZ)^d,
\text{ where } {\mathbf e_1}=(1,0,\ldots, 0)\in \RR^d
\end{align}
Thus  $\L_1(k^+)$ is a translation (on the torus) of $\L_1(k)$ in the positive direction along the first coordinate by $\ceil{\sqrt{d}}+1$.
Note that the map $\tL_L \ni k \mapsto k^+$ is injective.
By assumption, there exists a unique point $z_{k^+}$ of the Delone set $Z:=\{z_k\}_{k\in\Gamma}$
in $\L_1(k^+)$, and moreover, $B(z_{k^+},\delta)\subset \L_1(k^+)$.

Next we want to apply Corollary \ref{c:our_corollary} and for this purpose we have to check that the geometric assumptions are satisfied.
First note (see Figure \ref{A})
\[
\dist(\L_1(k), z_{k^+}) \geq |k-k^+| -\frac{1}{2} -\left(\frac{1}{2}-\delta\right)
\geq \ceil{\sqrt{d}}+\delta
\geq \diam \L_1(k)
\]
and
\[
\dist(\L_1(k), z_{k^+})
\leq |k-k^+| -\frac{1}{2} +\left(\frac{1}{2}-\delta\right)
\leq \ceil{\sqrt{d}}+1 -\delta
\leq \ceil{\sqrt{d}}+1
\]
Consequently $R$ will be in $\left[\ceil{\sqrt{d}}, 2\ceil{\sqrt{d}}\right]$.

\begin{figure}[ht]
\begin{center}
\includegraphics[width=9cm]{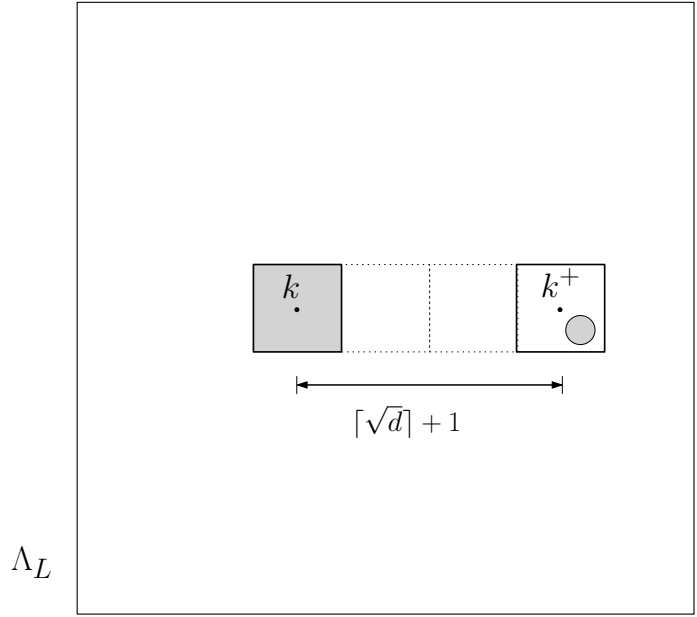}
\end{center}
\caption{Inside $\L_L$ in $d=2$, the dominating site $\L_1(k)$ and its right near-neighbor $k^+$.
The grey circle denotes the ball centered around the Delone point $z_{k^+}$.
The quantitative unique continuation principle
from Corollary \ref{c:our_corollary} relates the mass of $\psi$ on the grey areas.}
\label{A}
\end{figure}
\newpage
Moreover, for any  $y \in B(z_{k^+},14R)$
\begin{align}
\label{eq:choseT}
|k-y|
&\leq   \nonumber
|k-k^+| + |k^+ -z_{k^+}| + |z_{k^+}-y|
\leq \ceil{\sqrt{d}}+1+ \sqrt d +14 R
\\
&\leq 31 \ceil{\sqrt{d}} = T/2
\end{align}
Thus $B(z_{k^+},14R)\subset B(k, T/2) \subset \L_T(k)$.
Since $k \in \L_L$ is dominating, we have
\begin{equation}
\label{eq:beta-per}
\frac{\norm{\psi}_{\L_T(k)}^2}{\norm{\psi}_{\L_1(k)}^2} \leq  2T^{d}=:\beta =\beta_{\per}
\end{equation}
This means that we can apply Corollary \ref{c:our_corollary} with
\[
G= \L_T(k), \ \Theta = \L_1(k), \ x = z_{k^+}
\]
and the function $\psi\colon G \to \RR$.
It follows that for every dominating site $k$
\[
\norm{\psi}^2_{B(z_{k^+},\delta)}\geq C_{qUC} \norm{\psi}^2_{\L_1(k)}
\]
with the constant $C_{qUC}=C_{qUC}(d,K_{V},R,\delta,\beta_{\per})$
from Theorem \ref{t:scale_free-UCP}.
Taking the sum over all dominating sites $k \in \L_L$ we obtain
\begin{align*}
\label{eq:sum}
\sum_{k \in \tL_L \text{ dominating}} \norm{\psi}^2_{B(z_{k^+},\delta)}
&\geq C_{qUC} \sum_{k \in \tL_L \text{ dominating}} \norm{\psi}^2_{\L_1(k)}
\\
&\geq C_{qUC} \int_D \|\psi\|^2
\geq \frac{C_{qUC}}{2}\norm{\psi}_{\L_L}^2
\end{align*}
This ends the proof of Theorem \ref{t:scale_free-UCP} in the case of periodic b.c.
\medskip

\subsection*{Case of Dirichlet b.c.}
\label{r:DirichletBCend}
If the original (sub)solution $\psi$ has Dirichlet b.c. on $\partial \L_L$
we have to use the extension $\hat \psi$ described in Section \ref{r:Dirichelt-extension}
instead of the periodic one.
In this case we extend the set $Z_L:=Z \cap \L_L$ to a new set
\[
\hat Z_L:= \supp \widehat{{\bf 1}_{Z_L}}
\]
in the same way as the function $\hat \psi$. This $\hat Z_L$ is a Delone set and
is reflection symmetric {w.r.t.} to the hyperplane $\{x_1=\frac{L}{2}\}$.
A major difference to the proof of Theorem \ref{t:scale_free-UCP} for periodic boundary conditions
is that in the case of Dirichlet b.c. for dominating sites $k \in\tL_L$ we define
the right near-neighbor as
\[
k^+:=
k+ (\ceil{\sqrt{d}}+1)\,{\mathbf e_1},  \text{ where } {\mathbf e_1}=(1,0,\ldots, 0)\in \RR^d
\]
without taking $\!\!\!\mod (L\ZZ)^d$.
Thus there will be two cases: either the right near neighbor $k^+$ is still in $\tL_L$
(the set of such $k\in \tL_L$ will be denoted by $A$), or,
for $k$ with distance less than $\lceil \sqrt d\rceil +1$ to the right boundary of $\L_L$,
$k^+$ will be outside $\L_L$.
The set of such $k\in \tL_L$ will be denoted by $B$.
Note that even for $k\in B$ we still have:
\[
L/2 \leq k^+_1 \leq (L/2)  +\lceil \sqrt d\rceil +1 ,
\quad |k_j| \leq L/2 \text{ for, } j =2, \ldots , d
\]
There  is a unique Delone point $z_{k^+}\in \hat Z_L \cap \L_1(k^+)$
(which is in general not in the original Delone set $Z$).

If $L \geq \lceil \sqrt d\rceil +1 $ set
\begin{equation}
\label{eq:kplusminus}
k^{+-} := {k^+} -2(k^+_1- L/2) {\mathbf e_1}
= \left(  -k^+_1+ L, k^+_2,\ldots,k^+_d \right)
\end{equation}
This $k^{+-}$ is the mirror image of $k^+$ w.r.t.~to the hyperplane $\{x_1=\frac{L}{2}\}$,
cf.~Figure \ref{B}. For $k \in A$ we set $ k^{+-} := k^+$. Then, by construction, $k^{+-}$ is inside $\tL_L$.
There is a unique $z_{k^{+-}}\in Z\cap \L_1(k^{+-})$ and by the reflection symmetry  of $|\hat \psi|$
we have
\[
\|\psi\|_{B(z_{k^{+-}},\delta ) }
=
\|\hat \psi\|_{B(z_{k^+},\delta ) }
\]
Thus the application of Corollary \ref{c:our_corollary} as above
with $\beta=\beta_{\text{Dir}} :=2 (2T)^d$ yields
\begin{align*}
\sum_{k \in \tL_L \text{ dominating}} \norm{\psi}^2_{B(z_{k^{+-}},\delta)}
=
\sum_{k \in \tL_L \text{ dominating}} \norm{\hat\psi}^2_{B(z_{k^+},\delta)}
\\
\geq C_{qUC} \sum_{k \in \tL_L \text{ dominating}} \norm{\psi}^2_{\L_1(k)}
= C_{qUC} \int_D \|\psi\|^2
\geq \frac{C_{qUC}}{2}\norm{\psi}_{\L_L}^2
\end{align*}
Note that the two restricted maps
\begin{align*}
A \ni & k \mapsto k^+ \mapsto  k^{+-}, \\
B \ni & k \mapsto k^+ \mapsto  k^{+-}
\end{align*}
are both injections. Thus in the sum
\[
\sum_{k \in \tL_L \text{ dominating}} \norm{\psi}^2_{B(z_{k^{+-}},\delta)}
\]
each site $k^{+-}$ may appear at most twice. Consequently
\begin{align}
\label{eq:all-dominating}
\norm{\psi}_{\L_L}^2
\leq
\frac{2}{C_{qUC}}
\sum_{k \in \tL_L \text{ dominating}} \norm{\psi}^2_{B(z_{k^{+-}},\delta)}
\leq
\frac{4}{C_{qUC}}
\sum_{k \in \tL_L} \norm{\psi}^2_{B(z_{k},\delta)}
\end{align}
If $L < \lceil \sqrt d\rceil +1 $, estimate \eqref{eq:all-dominating} again holds,
although the assignment $k^+\mapsto k^{+-}$ has to be chose somewhat differently than
\eqref{eq:kplusminus}.

Note that the constant $C_{qUC}$ in the case of Dirichlet boundary conditions
differs from the one in case of periodic b.c. through the parameter
$\beta_{\text{Dir}}= 2 (2T)^d= 2^d \beta_{\per}$.

\begin{figure}[ht]
\begin{center}
\includegraphics[width=10cm]{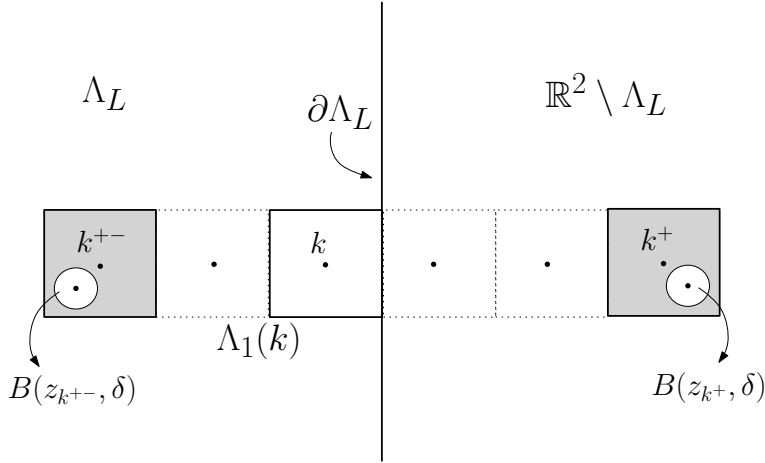}
\end{center}
\caption{A box near the boundary of $\L_L$.  In grey: its right-near neighbor and the corresponding mirror image in $\L_L$.}
\label{B}
\end{figure}

\appendix

\section{Proof of quantitative unique continuation estimate}

\subsection{Carleman and Cacciopoli estimate}
The crucial tool in the proof of Theorem \ref{t:quantitative-UCP}
is a Carleman estimate as derived in \cite{Hoermander-83,EscauriazaV-03}.
Its proof can be found in \cite{BourgainK-05} as well.
For us a scaled version from the Appendix of \cite{GerminetK}, which we state below, will be convenient.
Let $\rho >0$ and set
\begin{align}
\varphi &\colon [0,\infty) \to [0,\infty),
&\varphi(s)&:=  s\cdot\exp \left(-\int_0^s \frac {1 - \e^{-t}} t \, d t\right)
\\
w_\rho&\colon \RR^d\to [0,\infty[,
&w_\rho(x)&:=\varphi(|x|/\rho)
\end{align}
Then $\varphi$ is a strictly increasing continuous function, on $(0, \infty)$ even smooth,
and
\begin{equation}
\label{eq:carleman_condition}
\frac{|x|}{3 \rho}  \leq w_\rho(x) \le \frac{|x|}{ \rho}
\quad\text{ for all } x \in B(0,\rho)
\end{equation}

While the usual version of Carleman's estimate is valid for smooth functions,
a regularisation procedure, cf. e.g.\, \cite[Theorem 1.6.1]{Ziemer-89}, allows one to extend it
to functions in the second Sobolev space. Here is \cite[Lemma A.5]{GerminetK}:
\begin{lem}\label{l:Carleman}
Let $\rho$ and $w_\rho$ be as above.
There are constants $C_2,C_3 \in [1,\infty)$, depending only on the dimension $d$, such that for all
  $\alpha \ge C_2$ and  all real-valued $f\in W^{2,2}( B(0,\rho))$ with compact support in $B(0,\rho)\setminus \{0\}$
\begin{equation}
\label{eq:carleman_estimate}
\alpha^3 \int_{\RR^d}w_\rho^{-1-2\alpha}(x) f^2(x)  \, d x
\leq
 C_3 \rho^4 \int_{\RR^d}  w_\rho^{2-2\alpha}(x) (\Delta f(x))^2  \, d x
\end{equation}
holds.
\end{lem}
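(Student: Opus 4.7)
The plan is to follow the classical three-step strategy for Carleman inequalities with a radial weight: reduce to $\rho = 1$ by scaling, establish the estimate for smooth test functions via a conjugation-and-commutator argument, and finally extend to $W^{2,2}$ by regularization.

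For the scaling step, given $f \in W^{2,2}(B(0,\rho))$ supported compactly in $B(0,\rho) \setminus \{0\}$, set $g(y) := f(\rho y)$; then $g \in W^{2,2}(B(0,1))$ has compact support in $B(0,1) \setminus \{0\}$, and since $w_\rho(x) = w_1(x/\rho)$, the substitution $x = \rho y$ gives
\[
\int w_\rho^{-1-2\alpha} f^2\, dx = \rho^d \int w_1^{-1-2\alpha} g^2\, dy, \qquad \int w_\rho^{2-2\alpha} (\Delta f)^2\, dx = \rho^{d-4} \int w_1^{2-2\alpha} (\Delta g)^2\, dy.
\]
Thus the $\rho^4$ in \eqref{eq:carleman_estimate} is exactly what absorbs the scaling, and it suffices to prove the inequality for $\rho = 1$; I write $w := w_1 = \varphi(|\cdot|)$ from here on.

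For the core inequality on smooth $f \in C_c^\infty(B(0,1) \setminus \{0\})$, the standard device is the conjugation $f = w^{\alpha - 1/2} v$. A direct computation shows that multiplying $\Delta f$ by an appropriate power of $w$ produces $P_\alpha v$, where $P_\alpha$ is a second-order operator whose coefficients depend polynomially on $\alpha$. Splitting $P_\alpha = S_\alpha + A_\alpha$ into symmetric and skew-symmetric parts in a suitably weighted $L^2$ inner product yields
\[
\|P_\alpha v\|^2 = \|S_\alpha v\|^2 + \|A_\alpha v\|^2 + \langle [S_\alpha, A_\alpha]\, v, v \rangle,
\]
so after discarding the first two nonnegative terms, the inequality reduces to a lower bound of the form
\[
\langle [S_\alpha, A_\alpha]\, v, v \rangle \;\geq\; c\, \alpha^3 \int w^{-2} v^2
\]
for some $c = c(d) > 0$ and all $\alpha \geq C_2$. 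The specific shape $\varphi(s) = s\exp(-\int_0^s (1 - e^{-t})/t \, dt)$ is engineered precisely so that this commutator, once expanded into radial and angular components of $\nabla v$, is positive definite with the right $\alpha^3$ growth; the integral in the exponent forces $s\varphi'(s)/\varphi(s)$ to remain bounded away from both $0$ and $1$ on $(0,1)$, which is the pseudoconvexity condition that ultimately drives the positivity.

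The main obstacle is precisely this commutator computation: one has to expand $[S_\alpha, A_\alpha]$ carefully in polar coordinates, keep track of how the weight interacts with the angular Laplacian, and verify that the ODE satisfied by $\varphi$ makes every term contribute with the correct sign after integration by parts; the threshold $\alpha \geq C_2$ is chosen large enough to dominate all lower-order error terms produced in this expansion. To close the argument, I extend from smooth test functions to $f \in W^{2,2}(B(0,1))$ with $\supp f$ compactly contained in $B(0,1) \setminus \{0\}$ by convolving with a standard mollifier at a scale smaller than the distance from $\supp f$ to $\{0\} \cup \partial B(0,1)$; the $W^{2,2}$-convergence of mollifications (e.g. \cite[Theorem 1.6.1]{Ziemer-89}) lets the smooth-case inequality pass to the limit.
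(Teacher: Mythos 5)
You should be aware that the paper does not prove this lemma: it is taken as Lemma A.5 of Germinet--Klein (which already incorporates the $\rho$-scaling you spell out), itself a rescaled form of the Carleman estimate of H\"ormander and Escauriaza--Vessella as reproduced in Bourgain--Kenig, Lemma 3.15. The only content the paper supplies in-house is the one-sentence observation that a regularisation/mollification procedure (Ziemer, Theorem 1.6.1) passes the smooth-function estimate to compactly supported $W^{2,2}$ functions --- exactly your final paragraph. So at the level of what the paper actually argues, your proposal and the paper agree.

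Your scaling reduction to $\rho=1$ is correct (the $\rho^4$ in \eqref{eq:carleman_estimate} absorbs the $\rho^{-4}$ produced by $\Delta$, with the common factor $\rho^d$ cancelling), and the split $P_\alpha = S_\alpha + A_\alpha$ together with $\|P_\alpha v\|^2 = \|S_\alpha v\|^2 + \|A_\alpha v\|^2 + \langle [S_\alpha,A_\alpha]v,v\rangle$ is the standard device. But the heart of the matter --- the lower bound $\langle [S_\alpha,A_\alpha]v,v\rangle \geq c\,\alpha^3 \int w^{-2} v^2$ for $\alpha$ beyond some dimensional threshold, driven by the pseudoconvexity of the weight --- is asserted, not derived; you flag it yourself as ``the main obstacle'' and then stop. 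That computation, carried out in polar coordinates and exploiting the identity $s\varphi'(s) = \e^{-s}\varphi(s)$ (so that $s\varphi'/\varphi$ stays strictly between $0$ and $1$ on the relevant range), is precisely where all the work of the cited references lives, and it also produces the threshold $C_2$ and constant $C_3$ whose dimension-dependence the lemma claims. As a stand-alone proof, the proposal therefore has a gap at the one step that is not routine; as a roadmap for why the citation holds and as a match for what the paper itself does (cite the smooth Carleman estimate, add the $W^{2,2}$ regularisation), it is on the right track.
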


The second tool which will be used several times are interior gradient estimates
and the Cacciopoli inequality. Due to the spherical symmetry of the integration domains
the constants take on a very simple form.

\begin{lem}[Interior gradient estimate]
\label{l:interior}
Let $0\leq a< c < \infty$, $G \subset\RR^d$ open,
$S:= B(c)\setminus B(a)=\{ x \mid a \leq |x| < c\}$,
$V\colon G\to \RR$ bounded and measurable,
and $\psi \in W^{1,2}(G)$ with $\Delta \psi \in L^2(G)$.
\begin{enumerate}[(a)]
 \item
If $a>0$ and $b \in (0,a)$ set
$S^+ := \overline{B(c+b)}\setminus B(a-b)=\{ x \in \RR^d\mid a-b \leq |x| \leq c+b\}$
and assume $S^+ \subset G$. Then
\[
\|\nabla\psi\|_S^2
\leq
\|(-\Delta+V) \psi \|_{S^+}^2 +
\left(1 +  \frac{9}{ b^2} \right)\|\psi \|_{S^+}^2
+2\int_{S^+} V_- |\psi|^2
\]
\item
If  $b \in (0,\infty)$ and $S^+:=\overline{B(c+b)}=\{ x \mid |x| \leq c+b\}\subset G$, then
\[
\|\nabla\psi\|_{B(c)}^2
\leq
\|(-\Delta+V) \psi \|_{S^+}^2 +
\left(1 +  \frac{9}{ b^2} \right)\|\psi \|_{S^+}^2
+2\int_{S^+} V_- |\psi|^2
\]
\end{enumerate}
\end{lem}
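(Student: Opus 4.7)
The plan is to prove both parts via a standard Cacciopoli-style integration-by-parts localized by a smooth cutoff. I would pick $\eta \in C^\infty(\RR^d)$ with $0\le\eta\le 1$, $\eta\equiv 1$ on $S$ (resp.\ on $B(c)$ in part (b)), $\supp \eta \subset S^+$, and $\|\nabla\eta\|_\infty \le 3/(2b)$. Such a cutoff is obtained by mollifying a piecewise-linear function of the distance to $S$ (or to $\overline{B(c)}$ in part (b)); the normal distance from the bulk set to $\partial S^+$ equals $b$ on each relevant face, so the factor $3/2$ is comfortable. Part (a) is genuinely an annular situation with both inner and outer boundaries, while part (b) is the same construction restricted to a ball---the geometry is the only difference between the two cases.

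The central computation starts from
\[
\int \eta^2 |\nabla\psi|^2 \,dx = \int \nabla\psi\cdot\nabla(\eta^2\psi)\,dx - 2\int \eta\psi\,\nabla\eta\cdot\nabla\psi\,dx .
\]
Because $\eta^2\psi$ has compact support in the open set $G$ and $\Delta\psi\in L^2(G)$, Green's identity yields $\int \nabla\psi\cdot\nabla(\eta^2\psi)=-\int(\Delta\psi)\eta^2\psi$. Substituting $-\Delta\psi = (-\Delta+V)\psi - V\psi$ splits this into a source term, which I bound by $\tfrac12\|(-\Delta+V)\psi\|_{S^+}^2 + \tfrac12\|\psi\|_{S^+}^2$ via $ab\le\tfrac12(a^2+b^2)$, and a potential term, which via $-V\le V_-$ is bounded by $\int_{S^+}V_-\psi^2$.

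For the cross term I would apply AM--GM with weight $1/2$,
\[
\Bigl|2\int \eta\psi\,\nabla\eta\cdot\nabla\psi\Bigr| \le \tfrac12\int \eta^2|\nabla\psi|^2 + 2\int|\nabla\eta|^2\psi^2 ,
\]
absorb the first summand into the left-hand side, and insert $\|\nabla\eta\|_\infty^2 \le 9/(4b^2)$. After doubling, the coefficient on $\|\psi\|_{S^+}^2$ is exactly $1 + 9/b^2$ and the coefficient on $\int_{S^+} V_-\psi^2$ is exactly $2$, reproducing the stated bound. Since $\eta\equiv 1$ on $S$ (resp.\ on $B(c)$) one has $\int \eta^2|\nabla\psi|^2 \ge \|\nabla\psi\|_S^2$ (resp.\ $\|\nabla\psi\|_{B(c)}^2$), which closes the argument.

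There is no serious obstacle. The mild point worth flagging is that Green's identity is being applied under the minimal regularity $\psi\in W^{1,2}(G)$ with $\Delta\psi\in L^2(G)$ rather than $W^{2,2}$; this is standard and relies solely on $\eta^2\psi$ having support compactly contained in $G$. The explicit constant $9/b^2$ is simply the numerical output of the chosen AM--GM weight and the gradient bound on $\eta$, so the only quantitative input is the cutoff construction itself.
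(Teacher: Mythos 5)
Your proof is correct and follows essentially the same route as the paper. The paper's argument is stated tersely: it asserts the inequality $\|\eta\nabla\psi\|^2 \leq \|\eta(-\Delta+V)\psi\|^2 + \|\eta\psi\|^2 + 4\|\psi\nabla\eta\|^2 + 2\langle\eta\psi,V_-\eta\psi\rangle$ for $\eta\in C_0^1(G)$ without derivation, then specializes $\eta$; your write-up supplies precisely the integration-by-parts computation (starting from $\int\eta^2|\nabla\psi|^2 = \int\nabla\psi\cdot\nabla(\eta^2\psi)-2\int\eta\psi\nabla\eta\cdot\nabla\psi$, the splitting $-\Delta\psi=(-\Delta+V)\psi-V\psi$, and the AM--GM steps with weight $1/2$) that yields the same terms with the same constants, and your cutoff bound $\|\nabla\eta\|_\infty\leq 3/(2b)$ is exactly what reproduces the $9/b^2$ via the factor $4\|\nabla\eta\|_\infty^2$, matching the paper's choice of a spherically symmetric $\eta$ with support $S^+$ and $\eta\equiv 1$ on $S$ (resp.\ $B(c)$).
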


\begin{proof}
For $\eta \in C_0^1(G)$
\[
\|\eta\nabla\psi\|^2
\leq
\|\eta (-\Delta + V) \psi \|^2 + \|\eta  \psi \|^2
+4 \|\psi \nabla \eta \|^2
+2\langle \eta\psi, V_- \eta \psi\rangle
\]
In the case $ 0 \leq \eta \leq 1$, $\supp \eta = S^+$, $\eta|_S\equiv 1$ we obtain
\begin{equation*}
\|\nabla\psi\|_S^2
\leq
\|\eta\nabla\psi\|^2
\leq
\|(-\Delta + V) \psi \|_{S^+}^2 + \|\psi \|_{S^+}^2
+4 \|\psi \nabla \eta \|^2
+2\int_{S^+} V_- |\psi|^2
\end{equation*}
Now we choose $\eta$ spherically symmetric such that the gradient is given by the radial derivative
and insert the appropriate sets $S$ and $S^+$.
\end{proof}
An immediate consequence is
\begin{lem}[Cacciopoli inequality]
\label{l:Cacciopoli}
Let $0\leq a< c < \infty$, $G \subset\RR^d$ open,
$S:= \overline{B(c)}\setminus B(a)=\{ x \mid a \leq |x| \leq c\}$,
$V\colon G\to \RR$ bounded and measurable,
and $\psi \in W^{1,2}(G)$ with $\Delta \psi \in L^2(G)$.
Assume that $\psi$ satisfies \eqref{eq:subsolution}.
Then
\begin{enumerate}[(a)]
 \item
Let  $a>0$ and $b \in (0,a)$. Assume $\overline{B(c+b)}\setminus B(a-b)=\{ x \mid a-b \leq |x| \leq c+b\}\subset G$.
Then
\[
\int_{B(c)\setminus B(a)} |\nabla\psi |^2
\leq
\left(1 +  \frac{9}{ b^2} + \|V\|_\infty^2\right) \int_{B(c+b)\setminus B(a-b)} |\psi |^2
\]
\item
Let $a\geq 0$ and $b \in (0,\infty)$. Assume $\overline{B(c+b)}=\{ x \mid |x| \leq c+b\}\subset G$.
Then
\[
\int_{B(c)\setminus B(a)} |\nabla\psi |^2
\leq
\left(1 +  \frac{9}{ b^2} + \|V\|_\infty^2\right) \int_{B(c+b)} |\psi |^2
\]
\end{enumerate}
\end{lem}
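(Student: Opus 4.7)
The plan is simply to specialize the interior gradient estimate (Lemma \ref{l:interior}) to the zero potential and then absorb the remaining Laplacian term using the subsolution hypothesis \eqref{eq:subsolution}. The crucial observation is that the potential appearing inside Lemma \ref{l:interior} is an auxiliary parameter, logically independent of the potential $V$ in the present statement; choosing it to be identically zero (which is certainly allowed, since Lemma \ref{l:interior} holds for every bounded measurable potential) gives the cleanest bound and avoids the factor $4$ that would otherwise appear through the triangle inequality $|(-\Delta+V)\psi|\leq |\Delta\psi|+|V\psi|\leq 2|V\psi|$.

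Concretely, for part (a) I would apply Lemma \ref{l:interior}(a) with its potential set to $0$. Since the $V_-$ term then vanishes, this yields
\[
\|\nabla\psi\|_{B(c)\setminus B(a)}^2 \leq \|\Delta\psi\|_{S^+}^2 + \left(1 + \frac{9}{b^2}\right)\|\psi\|_{S^+}^2,
\]
where $S^+ = \overline{B(c+b)}\setminus B(a-b)$. The hypothesis $|\Delta\psi|\leq |V\psi|$ then gives the pointwise bound $|\Delta\psi(x)|^2 \leq \|V\|_\infty^2 |\psi(x)|^2$ for almost every $x\in G$, hence
\[
\|\Delta\psi\|_{S^+}^2 \leq \|V\|_\infty^2 \, \|\psi\|_{S^+}^2.
\]
Adding the two estimates produces exactly the stated constant $1+9/b^2+\|V\|_\infty^2$. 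Part (b) follows identically, replacing Lemma \ref{l:interior}(a) by Lemma \ref{l:interior}(b) and $S^+$ by $\overline{B(c+b)}$.

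There is no real obstacle here: the result is a two-step concatenation of a previously established gradient estimate with the differential subsolution inequality, which is exactly why the authors label it an "immediate consequence" of the preceding lemma. The only piece of care is the mental bookkeeping that the symbol $V$ plays two distinct roles — a free parameter in Lemma \ref{l:interior}, which we set to zero, and the fixed potential from \eqref{eq:subsolution}, which enters only through the sup-norm bound on $|\Delta\psi|$.
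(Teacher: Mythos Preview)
Your argument is correct and is precisely the intended derivation: the paper records the lemma as ``an immediate consequence'' of Lemma~\ref{l:interior}, and your choice of the auxiliary potential equal to zero in that lemma is exactly what makes the constant come out as $1+9/b^{2}+\|V\|_\infty^{2}$ rather than something larger. Your remark that the symbol $V$ plays two independent roles is the only nontrivial point here, and you handled it cleanly.
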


For the prefactors appearing on the  RHS we will use the symbol $C_{V,b}$.

\subsection{Carleman estimate implies Theorem \ref{t:quantitative-UCP} }
To achieve zero boundary values we will use a function $\eta \colon \RR^d \to [0,1]$, $\eta\in  C^\infty_0$ depending only on $|x|$ and satisfying
\begin{align}
\label{eq:eta}
 \eta             &=0     &\text{ on } {B\left(\frac{\delta}{8}\right)} \cup B(12R+D_0)^c
\\
\nonumber
 \eta             &=1     &\text{ on } {B(12R)} \setminus B\left(\frac{\delta}{4}\right)
\\
\nonumber
   \max\left( \|\nabla\eta\|_\infty, \|\Delta\eta\|_\infty \right)
& \leq  \left(\frac{K_{\Delta}}{\delta}\right) ^2 & \text{ on } {B\left(\frac{\delta}{4}\right)}
 \setminus B\left(\frac{\delta}{8}\right)
\\
\nonumber
   \max\left( \|\nabla\eta\|_\infty, \|\Delta\eta\|_\infty \right)
& \leq \left( \frac{K_{\Delta}}{D_0}\right)^2 & \text{ on } {B(12R+D_0)} \setminus B(12R)
\end{align}
Here $K_\Delta\in (0,\infty)$ is an absolute constant.
We set $\rho=24R$  and apply the Carleman estimate in Lemma \ref{l:Carleman} to the function $f=\eta\psi$
\begin{align*}
\alpha^3 \int w_\rho^{-1-2\alpha}(\eta\psi)^2
& \leq C_3\rho^4\int w_\rho^{2-2\alpha}\left( 4(\Delta \eta)^2\psi^2+16 |\nabla\eta|^2|\nabla\psi|^2+4\eta^2(\Delta\psi)^2  \right)
\end{align*}
where we used $(a+b+c)^2\leq 4(a^2+b^2+c^2)$.
Hence
\begin{equation}
 \label{eq:AB}
\frac{\alpha^3}{4C_3\rho^4} \int w_\rho^{-1-2\alpha}(\eta\psi)^2
\leq
 A + B
\end{equation}
where
\begin{align*} A& :=\int w_\rho^{2-2\alpha}\eta^2(\Delta\psi)^2\\
B & := \int_{\supp\nabla\eta} w_\rho^{2-2\alpha}\left( (\Delta \eta)^2\psi^2 +4|\nabla\eta|^2|\nabla\psi|^2 \right)
\end{align*}
\smallskip

First we derive an upper  {bound on A}.
Since $w_\rho\leq 1$ on $B(0,\rho)$ and
$\psi$ satisfies the  differential inequality
$|\Delta \psi(x)|\leq |V\psi(x)| $:
\begin{equation}
\begin{split}
 A=\int w_\rho^{2-2a} \eta^2 (\Delta \psi)^2
& \leq
 \int w_\rho^{2-2a} \eta^2 (V\psi)^2 \\
& \leq
K_{V}^2 \int w_\rho^{-1-2a} \eta^2 \psi^2
 \end{split}
\end{equation}
where  $K_{V}:=\norm{V}_\infty $.
Inserting this in \eqref{eq:AB} yields,
\begin{align}
\label{eq:subsume_V}
\left(\frac{\alpha^3}{4C_3\rho^4} - K_{V}^2 \right)\int w_\rho^{-1-2a} \eta^2 \psi^2 &\leq B
\end{align}
\smallskip

We choose now $\alpha$ so that it yields and efficient lower bound on the LHS of \eqref{eq:subsume_V}
and satisfies properties needed for \eqref{eq:D0subs} to hold.

Let $\alpha$ be such that
\begin{subequations}
\label{eq:choicealpha}
\begin{equation}
\label{eq:choicealpha1}
\alpha \geq C_2 >1, \quad \text{ }
\end{equation}
where $C_2$ is the constant from the Carleman estimate,
\begin{equation}
\label{eq:choicealpha2}
\alpha \geq  \sqrt[3]{24^5 C_3 K_V^2 R^4}
\end{equation}
\begin{equation}
\label{eq:choicealpha3}
\begin{array}{cll}
&\alpha &\geq \frac{1}{2} \ln\left( \left(\frac{24RK_{\Delta}}{D_0}\right)^4 \frac{C_3(1+4C_{V,D_0})}{45}\beta\right)
 \end{array}
\end{equation}
 \end{subequations}
Thus \eqref{eq:subsume_V}  and (\ref{eq:choicealpha2}) ensure
\begin{equation}
\label{eq:AB2}
\frac{5}{6}\frac{\alpha^3}{4C_3 \rho^4} \int w_\rho^{-1-2a} \eta^2 \psi^2
\leq B
\end{equation}
\smallskip

We derive now {a lower bound on the LHS of \eqref{eq:AB2}}.
Since $-1-2\alpha<0$ and $\frac{|x|}{3\rho}\leq w_\rho\leq \frac{|x|}{\rho}$, we have

\begin{equation}\label{eq:wbound}
\frac{(3\rho)^{1+2\alpha}}{|x|^{1+2\alpha}}\geq w_\rho^{-1-2\alpha}\geq \frac{\rho^{1+2\alpha}}{|x|^{1+2\alpha}}
\end{equation}
Since $4R \geq \delta$ and  $\Theta\subset B(0,2R)\setminus B(0,R)$ we have
$\eta\equiv 1$ on $\Theta$.
Therefore, by \eqref{eq:wbound} and recalling $\rho=24R$, we get
\begin{equation}
\int w_\rho^{-1-2\alpha}(\eta\psi)^2 \geq \int_{\Theta} w_\rho^{-1-2\alpha}\psi^2\geq \left(\frac{\rho}{2R}\right)^{1+2\alpha} \int_\Theta\psi^2
=
12^{1+2\alpha} \norm{\psi}_\Theta^2
\end{equation}
So \eqref{eq:AB2} implies
\begin{equation}
\label{eq:AB3}
\frac{5}{2} \cdot\frac{\alpha^3}{C_3 \rho^4}12^{2\alpha} \norm{\psi}_\Theta^2=\frac 5 6 \cdot\frac{\alpha^3}{4C_3 \rho^4}(12)^{1+2\alpha} \norm{\psi}_\Theta^2 \leq B
\end{equation}
\smallskip

We turn now to an upper {bound on $B$}.
We first use $w_\rho^{2-2\alpha}\leq \left(\frac{3\rho}{|x|}\right)^{2\alpha-2}$ to obtain
\begin{align}
B &
\leq \int_{\supp\nabla\eta} \left(\frac{3\rho}{|x|}\right)^{2\alpha-2}\left( (\Delta \eta)^2\psi^2 +4|\nabla\eta|^2|\nabla\psi|^2 \right)
\\&
= \int_{\supp\nabla\eta} \left(\frac{72R}{|x|}\right)^{2\alpha-2}\left( (\Delta \eta)^2\psi^2 +4|\nabla\eta|^2|\nabla\psi|^2 \right)
\end{align}
Now we split the integral according to the two components of
\begin{equation*}
\supp\nabla\eta
\subset \left\{ \frac{\delta}{8} \leq\abs{x}\leq \frac{\delta}{4}\right\}
\cup \left\{ 12R \leq\abs{x}\leq 12R+D_0\right\}
\end{equation*}
and obtain, using the upper bound on the derivatives of $\eta$ and   the monotonicity of $|x|^{-1}$.
\begin{align}
B
& \leq   \int_{\frac{\delta}{8}\leq |x| \leq \frac{\delta}{4}}
{\left(\frac{8\cdot 72R}{\delta}\right)^{2\alpha -2}}
\left[{\left(\frac{K_{\Delta}}{\delta}\right)^4 \psi^2 + 4 \left(\frac{K_{\Delta}}{\delta}\right)^4 |\nabla\psi|^2}\right]
\\
&
+ \int_{12R\leq |x| \leq 12R+D_0}
{ 6^{2\alpha -2}}
{\left(\frac{K_{\Delta}}{D_0} \right)^4 (\psi^2 + 4|\nabla\psi|^2)}
\end{align}
In the next step we want to use the Cacciopoli inequality in Lemma \ref{l:Cacciopoli} and enlarge therefore the set
$\left\{\frac{\delta}{8}\leq |x| \leq\frac{\delta}{4}\right\}$ to
$\left\{ |x| \leq \frac{3\delta}{4}\right\} \subset B(0,\delta) $
and $\{12R\leq|x|\leq 12R+D_0\}$ to
$\{|x|\leq 12R+2D_0  \} \subset G$.
Thus we obtain
\begin{align}
B &
\leq \left(\frac{12\cdot 48R}{\delta}\right)^{2\alpha -2} \left(\frac{K_{\Delta}}{\delta}\right)^4 (1+4C_{V,\frac{\delta}{2}}) \|\psi\|_{B(0,\delta)}^2
\\
& \quad   \label{eq:Bbound}
+ 6^{2\alpha -2}\left(\frac{K_{\Delta}}{D_0}\right)^4 (1+ 4C_{V,D_0}) ||\psi||_G^2
\end{align}
Now we want to subsume the  term which depends on $D_0$ into the lower bound.
More precisely,  we want to show that
\begin{align}
\label{eq:D0subs}
\frac{5}{4} \cdot\frac{\alpha^3}{C_3 \rho^4}12^{2\alpha} \norm{\psi}_\Theta^2 & \geq  \left(\frac{K_{\Delta}}{D_0}\right)^4   6^{2\alpha-2}(1+ 4C_{V, D_0}) \norm{\psi}_G^2
\end{align}
Assumption (\ref{eq:choicealpha3}) implies
\begin{align*}
4^{\alpha} &\geq \frac{(24R)^4}{45} \left(\frac{K_{\Delta}}{D_0}\right)^4   C_3(1+ 4C_{V, D_0}) \beta
\intertext{which simplifies to}
\frac{5}{4} \frac{\alpha^3}{C_3 \rho^4} 2^{2\alpha}
& \geq \left(\frac{K_{\Delta}}{D_0}\right)^4  6^{-2}(1+ 4C_{V, D_0}) \beta
\text{ i.e. }(\ref{eq:D0subs})
\end{align*}
Hence  the Assumptions \eqref{eq:choicealpha} together with \eqref{eq:AB3}
and inequality \eqref{eq:Bbound}  imply
\begin{align}
\frac{5}{4}\frac{\alpha^3}{C_3 \rho^4} \norm{\psi}_\Theta^2
\leq
\frac{1}{12^2} \left(\frac{48R}{\delta}\right)^{2\alpha -2} \left(\frac{K_\Delta} {\delta}\right)^4
& (1+4C_{V,\frac{\delta}{2}}) \norm{\psi}^2_{B(0,\delta)}
\end{align}
We use the bound (\ref{eq:choicealpha1}) and obtain:
\begin{equation}
\label{eq:boundB1}
\begin{split}
\norm{\psi}^2_{B(0,\delta)}
&\geq
\frac{5}{4} \frac{C_2^3}{C_3} K_\Delta^{-4} R^{-2} \delta^2 (1+4C_{V,\frac{\delta}{2}})^{-1} \left( \frac{\delta}{48R} \right)^{2\alpha}
 \norm{\psi}^2_\Theta
 \end{split}
\end{equation}
Now we insert in the estimate the bounds from the Cacciopoli inequality. Using the abbreviation  $ b_1 =\min(b,1)$ we conlude
\begin{align}
1+4C_{V,b}
\leq
\frac{41+4\norm{V}_\infty^2}{ b_1^2}
\end{align}
Thus we obtain as a lower bound
\begin{align}
\label{eq:nr1}
\norm{\psi}^2_{B(0,\delta)}
&\geq \frac{5}{16} \frac{C_2^3}{C_3 K_\Delta^4} \frac{\delta^4}{R^2} \left(\frac{\delta}{48R}\right)^{2\alpha} \frac{1}{41} \frac{1}{1+\norm{V}_\infty^2} \norm{\psi}_\Theta^2.
\end{align}
Note that by the Cacciopoli inequality \eqref{eq:choicealpha3} is satisfied if
\begin{align}
 \label{eq:Ungl3}
2\alpha \geq \ln\left(\left(\frac{24RK_\Delta}{D_0}\right)^4 C_3 \frac{1+\norm{V}_\infty^2}{D_1^2} \beta\right)
\end{align}
where we set $D_1=\min(D_0,1)$.

\subsection{Special cases treated in Corollary \ref{c:our_corollary}}

We specialize to the case $\sqrt{d} \leq R\leq D_0$.
Then (\ref{eq:Ungl3}) is satisfied if
\begin{align*}
2\alpha &\geq
\ln\left(	c_4 (1+\norm{V}_\infty^2) \beta\right),
\quad
{c_4:=  (24K_\Delta)^4 C_3}
\end{align*}
In particular, all three conditions \eqref{eq:choicealpha} are satisfied if
\[
2\alpha
\geq
2C_2
+ 2(24^5 C_3)^{1/3} \, ( K_V^2 R^4)^{1/3}
+ \ln\left(c_4 (1+\norm{V}_\infty^2)\right) + \ln \beta
\]
Under this condition we have
\begin{multline*}
\norm{\psi}^2_{B(0,\delta)}
\geq
c_0 \frac{\delta^4}{R^2} \left(\frac{\delta}{48R}\right)^{2\alpha}  \frac{1}{1+\norm{V}_\infty^2} \norm{\psi}_\Theta^2
\\
\geq
c_0 \left(\frac{\delta}{48R}\right)^{4+2C_2
+ 2(24^5 C_3)^{1/3} \, ( K_V^2 R^4)^{1/3}
+ \ln\left(c_4 (1+\norm{V}_\infty^2)\right) + \ln \beta
}  \frac{1}{1+\norm{V}_\infty^2} \norm{\psi}_\Theta^2
\end{multline*}
where $c_0:= \frac{5}{16} \frac{C_2^3}{C_3 K_\Delta^4} \frac{1}{41} $.
Now this means that for a sufficiently large ${C} \in (1,\infty)$ depending only on the dimension
we have:
\begin{align}
\label{eq:clear_CqUC}
\norm{\psi}^2_{B(0,\delta)}
&\geq
\left(\frac{\delta}{{C} R}\right)^{{C}+ {C} K_V^{2/3} R^{4/3}
+ \ln \beta}  \norm{\psi}_\Theta^2
\end{align}
\medskip

Now we restrict our choice of parameters even further and assume
\begin{align*}
\sqrt{d}\leq R \leq D_0, \ R \leq 2 \lceil\sqrt{d}\rceil
\end{align*}
Then \eqref{eq:clear_CqUC} implies
that there exists a constant ${C} \in (1,\infty)$ depending only on the dimension $d$  such that
\begin{align}
\norm{\psi}^2_{B(0,\delta)}
&\geq
\left(\frac{\delta}{{C}}\right)^{{C} + {C} K_V^{2/3} + \ln \beta}
\norm{\psi}^2_\Theta
\end{align}

\bigskip

\section{Control of local fluctuations inside dominating boxes}

In the proof of Theorem \ref{t:scale_free-UCP} we derived a lower bound on the total $L^2$-mass 
$\|\psi\|_{\L_1(k)}^2$ inside a box $\L_1(k)$ of a dominating site $k$, 
in comparison with the mass near the corresponding right near neighbor of $k$, cf. \S \ref{ss:rightNN}.
Here we present a more detailed statement about mass fluctuations on small scales inside a dominating box $\L_1(k)$. 
The dependence on the parameters is quite involved and the statement is not used in the main body of the paper.
Nevertheless, the control of mass fluctuations on small scales may be useful in other contexts.

\begin{thm}
\label{t:local-fluctuations}
Let $\delta \in(0,1/20]$ be fixed, $V$, $\L_L$ and $\psi$ as in Theorem \ref{t:scale_free-UCP}
$k \in\tL_L$ a dominating site, as defined in Section \ref{s:dom-boxes} .
Then there exists a constant $C_{lf}\in(0,\infty)$ such that for any $B(x,\delta) \subset \L_1(k)$ we have
\[
\int_{B(x,\delta)} |\psi|^2
\geq
C_{lf} \int_{\L_1(k)} |\psi|^2
\]
The constant $C_{lf}$ is given explicitly in \eqref{appendix-eq:second}.
\end{thm}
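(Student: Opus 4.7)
The plan is to propagate a lower bound on the $L^2$-mass of $\psi$ from a seed ball, where such a bound is already available via the right near-neighbor argument of \S\ref{ss:rightNN}, to the arbitrary target ball $B(x,\delta) \subset \L_1(k)$, by a finite chain of applications of the quantitative unique continuation estimate.

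For the seed, I would reuse the computation of \S\ref{ss:rightNN}: since $k$ is dominating, Corollary \ref{c:our_corollary} applied with center $z_{k^+}$, $\Theta = \L_1(k)$, $G = \L_T(k)$, and $\beta \leq 2T^d$ already yields $\|\psi\|_{B(z_{k^+},\delta)}^2 \geq c_0\,\|\psi\|_{\L_1(k)}^2$ for an explicit $c_0 = c_0(d,K_V,\delta)>0$.

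For the chain, given arbitrary $x$ with $B(x,\delta) \subset \L_1(k)$, I would choose a sequence of points $y_0 = z_{k^+}, y_1, \ldots, y_N = x$ with $|y_i-y_{i+1}|=5\delta$; since $|z_{k^+}-x| \leq 2\sqrt d+2$, one can take $N \leq \lceil(2\sqrt d+2)/(5\delta)\rceil$, and the chain can be placed so that all balls $B(y_i,\delta)$ sit well inside $\L_T(k)$ (the available buffer of order $T/2 - 2\sqrt d$ is much larger than $12R+2D_0$ for the choice below). At step $i$, I would invoke Theorem \ref{t:quantitative-UCP} centered at $y_{i+1}$ with $\Theta = B(y_i,\delta)$, $R=4\delta$, $D_0=1$, and $G=\L_T(k)$; the geometric hypotheses $\diam \Theta = 2\delta \leq R$, $\dist(y_{i+1},\Theta)=R$, $\delta<4R$, and $B(y_{i+1},12R+2D_0)\subset G$ are all verified thanks to $\delta \leq 1/20$. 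Writing $c_i := \|\psi\|_{B(y_i,\delta)}^2/\|\psi\|_{\L_1(k)}^2$, the dominating condition controls the relative mass ratio by $\beta \leq 2T^d/c_i$, yielding the recursion
\[
c_{i+1} \;\geq\; C_{qUC}\bigl(d,K_V,1,4\delta,\delta,2T^d/c_i\bigr)\,c_i.
\]

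Finally, I would unfold this recursion. Because the explicit lower bound \eqref{eq:CqUC} depends on $\beta$ only through $\ln\beta$, with $R=4\delta$ fixed the recursion takes the polynomial form $c_{i+1} \geq K\,c_i^{1+\nu}$ for positive constants $K,\nu$ depending only on $d,K_V,\delta$. Iterating $N$ times then yields an explicit expression matching \eqref{appendix-eq:second}. The main obstacle is the self-referential degradation of the one-step factor: each $C_{qUC}$ itself depends on the current $c_i$, so the bound shrinks faster and faster as $i$ grows. The saving grace is that this degradation is only logarithmic in $\beta$ and that the chain length $N$ is finite (bounded in terms of $d$ and $\delta$), so the final constant $C_{lf}$ remains strictly positive, even if rapidly decreasing as a function of $1/\delta$.
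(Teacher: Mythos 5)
Your proposal is correct, but it takes a genuinely different route from the paper's. The paper does \emph{not} chain from the right near-neighbor ball; instead it introduces the notion of a \emph{maximal box}: by pigeonholing, $\L_1(k)$ contains a sub-box $\Theta_k$ of sidelength $1/(10\lceil\sqrt{d}\rceil)$ carrying at least a fraction $(10\lceil\sqrt{d}\rceil)^{-d}$ of the total $L^2$-mass of $\psi$ on $\L_1(k)$. Combined with the dominating condition this yields a $\beta$ for $\Theta=\Theta_k$ that is a pure dimensional constant. For $x$ at distance $\geq 1/10$ from the center $x_k$ of $\Theta_k$, a single application of Corollary~\ref{appendix-c:our_corollary} (with $R\in[1/10,\sqrt d]$) finishes. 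For $x$ close to $x_k$, the paper does one preparatory application to a ball $B(y_k,1/20)$ sitting in the outer belt $\L_1(k)\setminus\L_{6/10}(x_k)$ --- far from $x_k$ hence reachable by the first case --- and then one final application with $\Theta=B(y_k,1/20)$. So the paper uses the quantitative UCP at most \emph{twice}, and the resulting $\beta$'s are either dimensional constants or $\beta$-values that do not depend on $\delta$; consequently $C_{lf}$ in \eqref{appendix-eq:second} decays only polynomially in $\delta$.

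Your approach is also valid: the geometric verifications (taking $R=4\delta$, $D_0=1$, spacing $5\delta$, buffer inside $\L_T(k)$) go through, the monotonicity of $C_{qUC}$ in $\beta$ lets you propagate lower bounds through the recursion $c_{i+1}\geq K\,c_i^{1+\nu}$ with $\nu=\ln 192$ and $K=K(d,K_V,\delta)$, and since the chain length $N\leq\lceil(2\sqrt d+2)/(5\delta)\rceil$ is finite the unfolded constant is strictly positive. What you give up is the quality of the constant: unfolding $c_{i+1}\geq K\,c_i^{1+\nu}$ over $N\sim 1/\delta$ steps produces a lower bound of roughly the form $\exp\bigl(-(1+\nu)^{N}\cdot\text{const}\bigr)$, i.e.\ doubly exponentially small in $1/\delta$, versus the paper's $\delta^{O(1)}$. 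You also reintroduce a dependence on the near-neighbor machinery (hence on $T=62\lceil\sqrt d\rceil$), whereas the paper's appendix re-calibrates to the smaller $T=30\lceil\sqrt d\rceil$ precisely because the seed $\Theta_k$ lives \emph{inside} $\L_1(k)$, so the required $G$ can be smaller. The maximal-box idea is the ingredient your argument is missing if you wanted to reproduce the paper's sharper constant.
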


In the proof we will use the following Corollary of Theorem \ref{t:quantitative-UCP}.

\begin{cor}
\label{appendix-c:our_corollary}
Let $K_V, R, \beta\in [0, \infty)$,  $\delta\in (0,1]$.
Let $G\subset  \RR^d$  open, $x\in G$, $\Theta\subset G$ measurable,
satisfy the geometric conditions
 \begin{equation}
 \label{appendix-eq:geometric_conditions}
\diam \Theta \leq  R =\dist (x,\Theta),
\
0< \frac{\delta }{4} \leq \frac{1}{10}\leq R \leq \sqrt{d}, \text{ and }   \ B(x, 14R)\subset G,
 \end{equation}
and $V\colon G \to [-K_V,K_V]$ measurable, $\psi\in W^{2,2}(G)$ real-valued, satisfy
\begin{equation}
|\Delta \psi| \leq |V\psi| \quad \text{a.e. on }   G
\quad  \text{ and } \quad \frac{\norm{\psi}_G^2}{\norm{\psi}_\Theta^2} \leq \beta
\end{equation}
Then there exists a constant ${C}={C}(d)\in (1,\infty)$ depending only on the dimension, such that
\begin{align}
\label{appendix-eq:our_corollary}
\norm{\psi}^2_{B(x,\delta)}
&\geq
C_{qUC}
\norm{\psi}_\Theta^2
\ \text { where } \
C_{qUC}:=
\left(\frac{\delta}{{C}}\right)^{{C} + {C} K_V^{2/3} + \ln \beta}
\end{align}
\end{cor}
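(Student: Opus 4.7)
The approach is to invoke Theorem \ref{t:quantitative-UCP} with a carefully chosen $D_0$, and then use the bounded range of $R$ in the new hypothesis (which differs from Corollary \ref{c:our_corollary}, where only $R \ge \sqrt d$ was assumed) to absorb every $R$-dependent factor into a pure dimensional constant. Concretely, I would set $D_0 := R$. The geometric hypothesis $B(x, 12R + 2D_0) = B(x, 14R) \subset G$ of Theorem \ref{t:quantitative-UCP} is then exactly the one supplied, while $\delta < 4R$ follows from $\delta/4 \le 1/10 \le R$. The hypotheses $\diam \Theta \le R = \dist(x,\Theta)$ and the differential inequality $|\Delta\psi|\le|V\psi|$ together with the a priori ratio bound $\|\psi\|_G^2/\|\psi\|_\Theta^2\le\beta$ are already assumed.

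Next I would pick an $\alpha$ that meets the three lower bounds listed after \eqref{eq:CqUC}. Since now $1/10 \le R \le \sqrt d$ and $D_0 = R \ge 1/10$ (so $D_1 = \min(D_0,1) \ge 1/10$), each of them reduces to a purely dimensional control:
\begin{align*}
\alpha &\ge C_2, \\
\alpha &\ge (24^5 C_3 K_V^2 R^4)^{1/3} \le (24^5 C_3 d^2)^{1/3}\, K_V^{2/3}, \\
2\alpha &\ge \ln\!\left(\bigl(\tfrac{24 R K_\Delta}{D_0}\bigr)^4 C_3 \tfrac{1+K_V^2}{D_1^2}\beta\right) = c(d) + \ln(1+K_V^2) + \ln\beta,
\end{align*}
and the logarithmic term in $K_V$ is absorbed via the elementary estimate $\ln(1+K_V^2) \le c_1 + c_2 K_V^{2/3}$. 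Hence there is a constant $C_1 = C_1(d)$ for which the choice $\alpha = C_1 + C_1 K_V^{2/3} + \tfrac12 \ln\beta$ meets all three conditions simultaneously.

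Plugging this $\alpha$ into the explicit lower bound \eqref{eq:CqUC} and using $R \le \sqrt d$ as well as $D_0 = R \ge 1/10$, the whole prefactor $\tfrac{5}{16\cdot 41}\tfrac{C_2^3}{C_3 K_\Delta^4}\tfrac{\delta^4}{R^2}(1+K_V^2)^{-1}$ is bounded below by $c_0 \delta^4(1+K_V^2)^{-1}$ with $c_0 = c_0(d)$, and $(\delta/(48R))^{2\alpha} \ge (\delta/(48\sqrt d))^{2C_1 + 2C_1 K_V^{2/3}+\ln\beta}$. The final step is the cleanup: since $\delta \le 1$, once the absorbing constant $C$ is chosen large enough (say $C \ge \max(48\sqrt d,e)$ so that $|\ln(\delta/C)| \ge \ln C \ge 1$), both $\delta^4$ and $(1+K_V^2)^{-1}$ can be absorbed into factors of the form $(\delta/C)^{\text{const}}$ respectively $(\delta/C)^{\text{const}\cdot K_V^{2/3}}$, using once more $\ln(1+K_V^2)\le c_1+c_2 K_V^{2/3}$. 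Enlarging $C$ if necessary produces the advertised form $C_{qUC} = (\delta/C)^{C+C K_V^{2/3}+\ln\beta}$. The only real bookkeeping obstacle is making all four absorbing steps work with a single dimensional $C$; the substantive gain over Corollary \ref{c:our_corollary} is that the added hypothesis $R \le \sqrt d$ kills the $R^{4/3}$ factor in the exponent by absorbing it into $C(d)$.
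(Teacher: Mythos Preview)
Your proof is correct and follows essentially the same route as the paper's own derivation in \S A.3, which handles Corollary~\ref{c:our_corollary} by specializing Theorem~\ref{t:quantitative-UCP} with $D_0$ of order $R$ and then absorbing all $R$-dependence into dimensional constants once $R$ is confined to a bounded range; the paper does not spell out a separate argument for Corollary~\ref{appendix-c:our_corollary}, but the intended proof is precisely the one you give, with the only change being that the new hypothesis $1/10\le R\le\sqrt d$ replaces the old $\sqrt d\le R\le 2\lceil\sqrt d\rceil$.
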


\begin{proof}[Proof of Theorem \ref{t:local-fluctuations}]
For the proof we consider the extensions of  $\psi$ to periodic and Dirichlet b.c. defined in Section \ref{s:extension-solution}, together with the respective definitions of dominant boxes from Section \ref{s:dom-boxes}. 
In what follows we focus on the case of periodic b.c.
As before  we consider the periodic extension of $\psi$ as a function on the torus 
and denote it again by $\psi$.
The case of Dirichlet b.c. is treated completely analogously, 
up to the fact that an additional factor $2^d$ appears in the constants $\beta$ and $\tilde\beta$ below.

\subsection{Maximal box in a  dominating box}
Let us recall that for $T=30\ceil{\sqrt{d}}$, a dominating box $\L_1(k)$ in the case of periodic b.c. satisfies
\be\label{appendix-eq:dombox}
 \int_{\L_1(k)} |\psi|^2 \geq  \frac{1}{2 T^d} \int_{\L_T(k)} |\psi|^2.
\ee
Note that the choice of $T$ differs from the one in Section \ref{s:proof-scale_free-UCP}.2. This will become apparent later on, when setting the range of values for the distance parameter $R$, taking into consideration the geometric conditions \eqref{appendix-eq:geometric_conditions}.

We split $\L_1(k)$ into $(10\ceil{\sqrt{d}})^d$ (almost) disjoint boxes of sidelength $1/(10\ceil{\sqrt{d}})$.
Then there exists at least one
\begin{equation*}
x_k\in\left\{\left(\frac{1}{10\ceil{\sqrt{d}}}\ZZ\right)^d+\left( \frac{1}{20\ceil{\sqrt{d}}},\frac{1}{20\ceil{\sqrt{d}}},\cdots, \frac{1}{20\ceil{\sqrt{d}}} \right) \right\}\cap\L_1(k)
\end{equation*}
such that we have
\begin{equation}\label{appendix-maxbox}
 \int_{\L_{\frac{1}{10\ceil{\sqrt{d}}}}(x_k)} |\psi|^2 \geq  \frac{1}{(10\ceil{\sqrt{d}})^d} \int_{\L_1(k)} | \psi|^2
\end{equation}
Fix such a $x_k$, and call $\Theta_k:=\L_{\frac{1}{10\ceil{\sqrt{d}}}}(x_k)$ the \emph{maximal box}.
In particular, by the definition of a dominating box, cf. \eqref{appendix-eq:dombox},
\[
 \int_{\Theta_k} |\psi|^2 \geq \frac{1}{2(10\ceil{\sqrt{d}}T)^d} \int_{\L_T(k)} |\psi|^2
\]
and hence
\begin{equation}\label{appendix-quotient}
\frac{\norm{\psi}_{\L_T(k)}^2}{ \norm{\psi}_{\Theta_k}^2 } \leq 2(10\ceil{\sqrt{d}} T)^{d}:=\beta
\end{equation}

\subsection{UCP inside a dominating box}

We want to show that for a fixed $\delta \in(0,\frac{1}{20}]$, there exists a positive constant $C_{lf}$ such that for any ball $B(x,\delta)$
contained in a dominating $ \L_1(k)$, we have
\begin{equation}\label{appendix-aim}
    \int_{B(x,\delta)} |\psi|^2 \geq C_{lf}  \int_{\L_1(k)} |\psi|^2
 \end{equation}

\smallskip

The strategy to prove (\ref{appendix-aim}) is the following:
\begin{enumerate}[(i)]
 \item Take a ball $B(x,\delta)$ contained in a belt $A$ of inner radius $3/10$,
around the center $x_k$ of the maximal box $\Theta_k$ (so $x$ is far enough from $\Theta_k$) and apply Corollary \ref{appendix-c:our_corollary}.  We will obtain the desired estimate for all balls except for those in a neighborhood of $\Theta_k$.
 \item
To cover balls that are close to $\Theta_k$ not considered in (i), fix a box in $A$ as a new ``maximal box'',
which by the previous step satisfies an estimate of the type (\ref{appendix-maxbox}), and
is sufficiently far away from balls close to $x_k$.  Repeat Corollary \ref{appendix-c:our_corollary} to get (\ref{appendix-aim}) .
\end{enumerate}

\subsection{First case}
\label{appendix-first-est}

We want to apply Corollary \ref{appendix-c:our_corollary} to $\psi$
with $G=\L_T(k)$ and $\Theta=\Theta_k$ and some $x$ in a dominating box $\L_1(k)$
such that $B(x,\delta)\subset \L_1(k)$.

Consider the belt
\[
 A(x_k):= \L_1(k)\setminus \L_{\frac{3}{10}}(x_k) ,
\]
fix $0<\tilde\delta\leq 1/20$, and take $x\in A(x_k)$,  such that $B(x,\tilde\delta)\subset \L_1(k)$.
Thus
\[
B(x,\tilde\delta)
\subset \L_1(k)\setminus \L_{\frac{1}{10\ceil{\sqrt{d}}}}(x_k)
\quad  \text{ and } \quad
R:=\dist (x,\Theta) \geq \frac{1}{10}
\]
Hence $R \in[\frac{1}{10},\sqrt{d}] $,  and   since $ T=30\ceil{\sqrt{d}}$,
the geometric conditions \eqref{appendix-eq:geometric_conditions}
are satisfied.
 We obtain:
\begin{equation}\label{appendix-UCP1}
\norm{\psi}_{B(x,\tilde\delta)}^2 \geq  C_{qUC}(\tilde\delta, \beta) \norm{\psi}_{\Theta_k}^2
\end{equation}
where $C_{qUC}(\tilde\delta, \beta) =C_{qUC}(d,K_V, D_0, R,\tilde \delta, \beta) $
is defined in Corollary \ref{appendix-c:our_corollary}. In this proof we write only the dependence on the parameters $\tilde\delta$ and $\beta$
explicitly, since the other ones are held fixed.

By the definition of the maximal box,
\begin{equation}
\norm{\psi}_{B(x,\tilde\delta)}^2
\geq
c_1 \norm{\psi}_{\L_1(k)}^2,  \text{ where }
\label{appendix-eq:c1}
c_1:=c_1(\tilde\delta,\beta):= \frac{C_{qUC}(\tilde\delta, \beta) }{(10\ceil{\sqrt{d}})^d}
\leq  1
\end{equation}
So we have obtained (\ref{appendix-aim}) for any point $x\in A(x_k)$ with $B(x,\tilde\delta)$ contained in $\Lambda_1(k)$.

\subsection{Second case}\label{appendix-second-est}

To cover the case of balls centered in points in $\L_{\frac{3}{10}}(x_k)\cap\L_1(k)$, we apply Corollary \ref{appendix-c:our_corollary}
two times in $\L_T(k)$ with two different sets $\Theta$.
The first step is a particular case of \ref{appendix-first-est}.

Consider the belt
\[
 \tilde A (x_k) = \L_1(k)\setminus \L_{\frac{6}{10}}(x_k)
\]
Notice that $\tilde A (x_k)  \neq \emptyset$ independent of the location of $x_k$ in the cube $\L_1(k)$ and it always
contains a cube of sidelength $1/10$, say $\L_{\frac{1}{10}}(y_k)$, for some $y_k\in \tilde A (x_k)$.
In particular,
\[
B(y_k,\frac{1}{20}) \subset     \L_{\frac{1}{10}}(y_k)     \subset \tilde A(x_k) \subset A(x_k)
\]
so taking $\tilde\delta=1/20$ in the previous estimate (\ref{appendix-eq:c1})  and recalling (\ref{appendix-eq:dombox})
\[
\norm{\psi}_{B(y_k,\frac{1}{20})}^2
\geq \frac{c_1(\frac{1}{20}, \beta)}{2T^d} \int_{\L_T(k)} |\psi|^2
\]
Therefore, we obtain the analog of (\ref{appendix-quotient}):
\begin{equation}
\frac{\norm{\psi}_{\L_T(k)}^2}{ \norm{\psi}_{B(y_k,\frac{1}{20})}^2}
\leq \frac{2T^d}{c_1(\frac{1}{20}, \beta)}:=\tilde\beta
\end{equation}
Note that $\tilde \beta\geq \beta $. Now we are prepared for applying Corollary \ref{appendix-c:our_corollary} a second time.
\smallskip

We have that for any $x\in \L_{\frac{3}{10}}(x_k)$, $\dist(x, \tilde A(x_k))\geq 3/20$.
In particular, for any $x\in\L_{\frac{3}{10}}(x_k)\cap\L_1(k)$  we  have that $\dist (x, B(y_k, \frac{1}{20}))\in [\frac{3}{20}, \sqrt{d})$.
Take $G=\L_T(k)$, $\Theta=B(y_k,\frac{1}{20})$ with $y_k$ as above, and apply Corollary \ref{appendix-c:our_corollary} again to obtain
\[
 \norm{\psi}_{B(x,\delta)}^2  \geq
C_{qUC}(\delta, \tilde\beta)
\norm{\psi}_{B(y_k,\frac{1}{20})}^2
\]
In particular, by taking $B(x,\tilde\delta):=B(y_k,\frac{1}{20})$ in (\ref{appendix-eq:c1}) we get that
\begin{equation}
\label{appendix-eq:second}
\norm{\psi}_{B(x,\delta)}^2
\geq
C_{lf}
\norm{\psi}_{\L_1(k)}^2
\quad \text{where }
C_{lf}:=
\frac{ C_{qUC}(\frac{1}{20}, \beta)  C_{qUC}(\delta, \tilde\beta) }{(10\ceil{\sqrt{d}})^d}
\end{equation}
Notice that  $C_{lf}\leq c_1$. Indeed,  we have
\begin{align*}
C_{lf}
&= \frac{ C_{qUC}(\frac{1}{20}, \beta)  C_{qUC}(\delta, \tilde\beta) }{(10\ceil{\sqrt{d}})^d}
 \leq
\frac{C_{qUC}(\delta, \tilde\beta) }{(10\ceil{\sqrt{d}})^d}
 \leq
\frac{C_{qUC}(\delta, \beta) }{(10\ceil{\sqrt{d}})^d}
=c_1 (\delta, \beta)
 \end{align*}
where we used that $\tilde \beta \geq \beta$ and 
the function $\beta \mapsto C_{qUC}(\delta, \beta) $ is monotone decreasing in the parameter $\beta$, 
cf. \eqref{appendix-eq:our_corollary} in Corollary \ref{appendix-c:our_corollary}.  
Therefore \eqref{appendix-eq:second} holds with the constant $C_{lf}$ in both possible cases, \ref{appendix-first-est} and \ref{appendix-second-est}. 
\end{proof}

\bigskip
%

%
%

\textbf{Acknowledgements}
C. R-M. is thankful for hospitality at TU Chemnitz, where
this work was done. C. R-M. was partially supported by ANR BLAN 0261.
I.V. would like to thank J.~Voigt for discussions related to extension properties of
Sobolev-space functions, and Ch.~Rose for help in the preparation of the manuscript.
The authors thank F.~Germinet and the anonymous referees for suggesting how to simplify the
proof of Theorem \ref{t:scale_free-UCP}.
%
\def\cprime{$'$}\def\polhk#1{\setbox0=\hbox{#1}{\ooalign{\hidewidth
  \lower1.5ex\hbox{`}\hidewidth\crcr\unhbox0}}}

\end{document}